\def\smallbullet{\mbox{\larger[-5]$\bullet$}}
\def\lowsmile{\raisebox{-0.45ex}{$\smile$}}
\def\cupdot {\stackrel{\smallbullet}{\cup}}
\def\minusdot {\stackrel{\smallbullet}{\setminus}}
\def\negdot{\stackrel{\smallbullet}{-}}
\def\dcupdot {\stackrel{\smallbullet}{\lowsmile}}
\def\plusdot {\stackrel{\smallbullet}{\sqcup}}
\def\restr #1{{\restriction_{#1}}}
\def\set#1{\{#1\}}
\def\c#1{\mathfrak #1}
\def\lang#1{\mathcal #1}
\def\class#1{\mathbf#1}
\def\defn#1{\textbf{#1}}
\def\powerset{{\raisebox{0.4ex}{$\wp$}}}
\def\biimp{\leftrightarrow}
\def \impl {\rightarrow}
\def \bmeet {\cdot}
\def \wand {\mathbin{-\!*}}
\newcommand{\compo}{\mathbin{;}}
\newtheorem{theorem}{Theorem}[section]
\newtheorem{proposition}[theorem]{Proposition}
\newtheorem{lemma}[theorem]{Lemma}
\newtheorem{corollary}[theorem]{Corollary}
\theoremstyle{definition}
\newtheorem{remark}[theorem]{Remark}
\newtheorem{problem}[theorem]{Problem}
\newtheorem{definition}[theorem]{Definition}
\newtheorem{example}[theorem]{Example}
\def\ws{{winning strategy}}
\def\Ax#1{{\bf Ax$(#1)$}}
\def\lr#1{{\langle#1\rangle}}
\def\Los{\L{o}\'{s}}
\title{Disjoint-union Partial Algebras}
\author{Robin Hirsch}
\author{Brett McLean}
\address{Department of Computer Science, University College London, Gower Street, London WC1E 6BT}
\email{\{r.hirsch,b.mclean\}@ucl.ac.uk}
\begin{document}

\begin{abstract}
Disjoint union is a partial binary operation returning the union of two sets if they are disjoint and undefined otherwise. A disjoint-union partial algebra of sets is a collection of sets closed under disjoint unions, whenever they are defined. We provide a recursive first-order axiomatisation of the class of partial algebras isomorphic to a disjoint-union partial algebra of sets but prove that no finite axiomatisation exists. We do the same for other signatures including one or both of disjoint union and subset complement, another partial binary operation we define.

Domain-disjoint union is a partial binary operation on partial functions, returning the union if the arguments have disjoint domains and undefined otherwise. For each signature including one or both of domain-disjoint union and subset complement and optionally including composition, we consider the class of partial algebras isomorphic to a collection of partial functions closed under the operations. Again the classes prove to be axiomatisable, but not finitely axiomatisable, in first-order logic.

We define the notion of pairwise combinability. For each of the previously considered signatures, we examine the class  isomorphic to a partial algebra of sets/partial functions under an isomorphism mapping arbitrary suprema of pairwise combinable sets to the corresponding disjoint unions. We prove that for each case the class is not closed under elementary equivalence.  

However,  when intersection is added to any of the signatures considered, the isomorphism class of the partial algebras of sets is finitely axiomatisable and in each case we give such an axiomatisation.
\end{abstract}

\maketitle

\section{Introduction}

Sets and functions are perhaps the two most fundamental and important types of object in all mathematics. Consequently, investigations into the first-order properties of collections of such objects have a long history. Boole, in 1847, was the first to focus attention directly on the algebraic properties of sets \cite{B48}. The outstanding result in this area is the Birkhoff-Stone representation theorem, completed in 1934, showing that boolean algebra provides a first-order axiomatisation of the class of isomorphs of fields of sets \cite{Sto34}.

For functions, the story starts around the same period, as we can view Cayley's theorem of 1854 as proof that the group axioms are in fact an axiomatisation of the isomorphism class of collections of bijective functions, closed under composition and inverse \cite{doi:10.1080/14786445408647421}. Schein's survey article of 1970 contains a summary of the many similar results about algebras of partial functions that were known by the time of its writing \cite{Schein1970}.

The past fifteen years have seen a revival of interest in algebras of partial functions,  with results finding that such algebras are logically and computationally well behaved \cite{invitation, Jackson2003393, 1182.20058, DBLP:journals/ijac/JacksonS11, hirsch, completerep}. In particular, algebras of partial functions with composition, intersection, domain and range have the finite representation property \cite{finiterep}.

Separation logic is a formalism for reasoning about the state of dynamically-allocated computer memory \cite{reynolds2002separation}. In the standard `stack-and-heap' semantics, dynamic memory states are modelled by (finite) partial functions. Thus statements in separation logic are statements about partial functions.

The logical connective common to all flavours of separation logic is the separating conjunction $*$. In the stack-and-heap semantics, the formulas are evaluated at a given heap (a partial function, $h$) and stack (a variable assignment, $s$). In this semantics $h, s \models \varphi * \psi$ if and only if there exist  $h_1, h_2$  with disjoint domains, such that $h = h_1 \cup h_2$ and $h_1,s \models \varphi$ and $h_2, s \models \psi$. So lying behind the semantics of the separating conjunction is a partial operation on partial functions we call the domain-disjoint union, which returns the union when its arguments have disjoint domains and is undefined otherwise. Another logical connective that is often employed in separation logic is the separating implication and again a partial operation on partial functions lies behind its semantics.

Separation logic has enjoyed and continues to enjoy great practical successes \cite{berdine2005smallfoot, calcagno2015moving}. However Brotherston and Kanovich have shown that, for propositional separation logic, the validity problem is undecidable for a variety of different semantics, including the stack-and-heap semantics \cite{Brotherston-Kanovich:10}.  The contrast between the aforementioned positive results concerning  algebras of partial functions and the undecidability of a propositional logic  whose semantics are based on partial algebras of partial functions, suggests a more detailed investigation into the computational and logical behaviour of collections of partial functions equipped with the partial operations arising from separation logic. 

In this paper we examine, from a first-order perspective, partial algebras of partial functions over separation logic signatures---signatures containing one or more of the partial operations underlying the semantics of separation logic. Specifically, we study, for each signature, the isomorphic closure of the class of partial algebras of partial functions. Because these partial operations have not previously been studied in a first-order context we also include an investigation into partial algebras of \emph{sets} over these signatures.

In \Cref{definitions} we give the definitions needed to precisely define these classes of partial algebras. In \Cref{Axiomatisability} we show that each of our classes is first-order axiomatisable and in \Cref{Recursive} we give a method to form recursive axiomatisations that are easily understandable as statements about certain two-player games.

In \Cref{Non-axiomatisability} we show that though our classes are axiomatisable, finite axiomatisations do not exist.   	In \Cref{meet} we show that when ordinary intersection is added to the previously examined signatures, the classes of partial algebras become finitely axiomatisable.
In \Cref{Decidability} we examine decidability and complexity questions and then conclude with some open problems.

\section{Disjoint-union Partial Algebras}\label{definitions}

In this section we give the fundamental definitions that are needed in order to state the results contained in this paper. We first define the partial operations that we use.

\begin{definition}
Given two sets $S$ and $T$ the \defn{disjoint union} $S\cupdot T$ equals $S\cup T$ if $S\cap T=\emptyset$, else it is undefined. The \defn{subset complement} $S \minusdot T$ equals $S \setminus T$ if $T \subseteq S$, else it is undefined.
\end{definition}
Observe that $S\cupdot T=U$ if and only if $U\minusdot S=T$.

The next definition involves partial functions. We take the set-theoretic view of a function as being a functional set of ordered pairs, rather than requiring a domain and codomain to be explicitly specified also. In this sense there is no notion of a function being `partial'. But using the word partial serves to indicate that when we have a set of such functions they are not required to share a common domain (of definition)---they are `partial functions' on (any superset of) the union of these domains.  

\begin{definition}\label{dd}
Given two partial functions $f$ and $g$ the \defn{domain-disjoint union} $f\dcupdot g$ equals $f\cup g$ if the domains of $f$ and $g$ are disjoint, else it is undefined. The symbol $\mid$ denotes the total operation of composition.
\end{definition}

Observe that if the domains of two partial functions are disjoint then their union is a partial function. So domain-disjoint union is a partial operation on partial functions. If $f$ and $g$ are partial functions with $g \subseteq f$ then $f \setminus g$ is also a partial function. Hence subset complement gives another partial operation on partial functions.

The reason for our interest in these partial operations is their appearance in the semantics of separation logic, which we now detail precisely.

The \defn{separating conjunction} $*$ is a binary logical connective present in all forms of separation logic. As mentioned in the introduction, in the stack-and-heap semantics the formulas are evaluated at a given heap (a partial function, $h$) and stack (variable assignment, $s$). In this semantics $h, s \models \varphi * \psi$ if and only if there exist  $h_1, h_2$ such that $h = h_1 \dcupdot h_2$ and both $h_1,s \models \varphi$ and $h_2, s \models \psi$.

The constant $\texttt{emp}$ also appears in all varieties of separation logic. The semantics is $h, s \models \texttt{emp}$ if and only if $h = \emptyset$.

The \defn{separating implication} $\wand$ is another binary logical connective common in separation logic.  The semantics is $h, s \models \varphi \wand \psi$ if and only if for all $h_1, h_2$ such that $h = h_2 \minusdot h_1$ we have $h_1, s \models \varphi $ implies  $h_2, s \models \psi$.

\medskip

Because we are working with partial operations, the classes of structures we will examine are classes of partial algebras.

\begin{definition}\label{palgebra}
A \defn{partial algebra} $\c A = (A, (\Omega_i)_{i < \beta})$ consists of a domain, $A$, together with a sequence $\Omega_0, \Omega_1, \ldots$ of partial operations on $A$, each of some finite arity $\alpha(i)$ that should be clear from the context. Two partial algebras $\c A=(A, (\Omega_i)_{i < \beta})$ and $ \c B=(B, (\Pi_i)_{i < \beta})$ are \defn{similar} if for all $i < \beta$ the arities of $\Omega_i$ and $\Pi_i$ are equal. (So in particular $\c A$ and $\c B$ must have the same ordinal indexing their partial operations.)
\end{definition}

We use the word `signature' flexibly. Depending on context it either means a sequence of symbols, each with a prescribed arity and each designated to be a function symbol, a partial function symbol or a relation symbol. Or, it means a sequence of actual operations/partial operations/relations.

\begin{definition}\label{homo}
Given two similar partial algebras $\c A=(A, (\Omega_i)_{i < \beta})$ and $ \c B=(B, (\Pi_i)_{i < \beta})$, a map $\theta:A\rightarrow B$ is a \defn{partial-algebra homomorphism} from $\c A$ to $\c B$  if for all $i < \beta$ and all $a_1, \ldots, a_{\alpha(i)} \in A$ the value $\Omega_i(a_1, \ldots, a_{\alpha(i)})$ is defined if and only if $\Pi_i(\theta(a_1), \ldots, \theta(a_{\alpha(i)}))$ is defined, and in the case where they are defined we have \[\theta(\Omega_i(a_1, \ldots, a_{\alpha(i)}))=\Pi_i(\theta(a_1), \ldots, \theta(a_{\alpha(i)}))\]. If $\theta$ is surjective then we say $\c B$ is a \defn{partial-algebra homomorphic image} of $\c A$. A \defn{partial-algebra embedding} is an injective partial-algebra homomorphism.  An \defn{isomorphism} is a bijective partial-algebra homomorphism. 
\end{definition}

We are careful never to drop the words `partial-algebra' when referring to the notions defined in \Cref{homo}, since a bald `homomorphism' is an ambiguous usage when speaking of partial algebras---at least three differing definitions have been given in the literature. What we call a partial-algebra homomorphism, Gr\"{a}tzer calls a strong homomorphism \cite[Chapter~2]{Gratzer:ua79}.

Given a partial algebra $\c A$, when we write $a \in \c A$ or say that $a$ is an element of $\c A$, we mean that $a$ is an element of the domain of $\c A$. While total algebras are by convention nonempty, we make the choice, for reasons of convenience, to allow partial algebras to be empty. When we want to refer to a signature consisting of a single symbol we will often abuse notation by using that symbol to denote the signature.

We write $\powerset(X)$ for the power set of a set $X$.

\begin{definition}\label{power}
Let $\sigma$ be a signature whose symbols are members of $\{\cupdot, \minusdot, \emptyset\}$. A \defn{partial $\sigma$-algebra of sets}, $\c A$, with domain $A$, consists of a subset $A\subseteq \powerset(X)$ (for some \defn{base} set $X$), closed under the partial operations in $\sigma$, wherever they are defined, and containing the empty set if $\emptyset$ is in the signature. The particular case of $\sigma = (\cupdot)$ is called a \defn{disjoint-union partial algebra of sets} and the case $\sigma = (\cupdot, \emptyset)$ is a disjoint-union partial algebra of sets \defn{with zero}.
\end{definition}

\begin{definition}
Let $\sigma$ be a signature whose symbols are members of $\{\dcupdot, \minusdot, \mid, \emptyset\}$. A \defn{partial $\sigma$-algebra of partial functions}, $\c A$ consists of a set of partial functions closed under the partial and total operations in $\sigma$, wherever they are defined, and containing the empty set if $\emptyset$ is in the signature. The \defn{base} of $\c A$ is the union of the domains and codomains of all the partial functions in $\c A$.
\end{definition}

\begin{definition}
Let $\sigma$ be a signature whose symbols are members of $\{\cupdot, \minusdot, \emptyset\}$. A \defn{$\sigma$-representation by sets} of a partial algebra is an isomorphism from that partial algebra to a partial $\sigma$-algebra of sets. The particular case of $\sigma = (\cupdot)$ is called a \defn{disjoint-union representation} (by sets). 
\end{definition}

\begin{definition}
Let $\sigma$ be a signature whose symbols are members of $\{\dcupdot, \minusdot, \mid, \emptyset\}$. A \defn{$\sigma$-representation by partial functions} of a partial algebra is an isomorphism  from that partial algebra to a partial $\sigma$-algebra of partial functions.  
\end{definition}

For a partial algebra $\c A$ and an element $a\in\c A$, we write $a^\theta$ for the image of $a$ under a representation $\theta$ of $\c A$. We will be consistent about the symbols we use for abstract (partial) operations---those in the partial algebras being represented---employing them according to the correspondence indicated below.
\begin{align*}
{\plusdot} \quad&\leadsto \quad\mbox{$\cupdot$ \;or\; $\dcupdot$}\\
{\negdot}\quad &\leadsto\quad {\minusdot}\\
{;\,} \quad&\leadsto \quad{\mid}\\
{0}\quad &\leadsto\quad {\emptyset}
\end{align*}

For each notion of representability we are interested in the associated representation class---the class of all partial algebras having such a representation. It is usually clear whether we are talking about a representation by sets or a representation by partial functions. For example if the signature contains $\cupdot$ we must be talking of sets and if it contains $\dcupdot$  we must be talking of partial functions. However, as part \eqref{first} of the next proposition shows, for the partial operations we are considering, representability by sets and representability by partial functions are the same thing.

\begin{proposition}\label{prop} \leavevmode
\begin{enumerate}\item  \label{first} 
Let $\sigma$ be a signature whose symbols are a subset of $\set{\cupdot, \minusdot, \emptyset}$ and let $\sigma'$ be the signature formed by replacing $\cupdot$ (if present) by $\dcupdot$ in $\sigma$. A partial algebra is $\sigma$-representable by sets if and only if it is $\sigma'$-representable by partial functions.
\item\label{mid}
 Let  $\c A$ be a partial $(\plusdot, \compo, 0)$-algebra. If the $(\plusdot, 0)$-reduct of $\c A$ is $(\cupdot, \emptyset)$-representable and $\c A$ validates $a \compo b = 0$, then $\c A$ is $(\dcupdot, \mid, \emptyset)$-representable.
\end{enumerate}
\end{proposition}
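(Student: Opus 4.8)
The plan is to take a representation of the $(\plusdot,0)$-reduct by sets and re-present each set as a partial function in such a way that all composites are forced to be empty, matching the hypothesis that $\c A$ validates $a\compo b = 0$. Let $\phi$ be a $(\cupdot,\emptyset)$-representation of the reduct, so that each $a\in\c A$ is sent to a set $a^\phi$ contained in some base $X$, with $\plusdot$ realised by $\cupdot$ and $0$ by $\emptyset$. I would split the base into two disjoint copies $X\times\set0$ and $X\times\set1$ and define
\[
a^\theta = \set{\bigl((x,0),(x,1)\bigr) : x \in a^\phi},
\]
so that $a^\theta$ is a partial function whose domain is a copy of $a^\phi$ lying in $X\times\set0$ and whose range is a copy of $a^\phi$ lying in $X\times\set1$.

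Next I would check that $\theta$ respects $\dcupdot$ and $0$. Since $\mathrm{dom}(a^\theta)=a^\phi\times\set0$, the domains of $a^\theta$ and $b^\theta$ are disjoint precisely when $a^\phi\cap b^\phi=\emptyset$; as $\phi$ realises $\plusdot$ by $\cupdot$, this is precisely when $a\plusdot b$ is defined, and in that case $a^\theta\cup b^\theta=(a\plusdot b)^\theta$. Thus $a^\theta\dcupdot b^\theta$ is defined exactly when $a\plusdot b$ is, with matching value. Since $0^\phi=\emptyset$ we have $0^\theta=\emptyset$, and $\theta$ is injective because $a^\phi$ can be recovered from $\mathrm{dom}(a^\theta)$.

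The crux is composition. Every domain sits inside $X\times\set0$ and every range inside $X\times\set1$, and these two layers are disjoint, so the range of $a^\theta$ never meets the domain of $b^\theta$ (in either order); hence $a^\theta\mid b^\theta=\emptyset$ for all $a,b$, whichever orientation convention is used for $\mid$. This is exactly what is needed, because the hypothesis gives $(a\compo b)^\theta=0^\theta=\emptyset$. In particular the image is closed under composition, every composite being $0^\theta$, so $\theta$ is an isomorphism onto a partial $(\dcupdot,\mid,\emptyset)$-algebra of partial functions, as required.

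I expect the only point requiring care to be the interaction of the two demands: collapsing all composites to $\emptyset$ must not perturb the domain-disjointness relations that encode $\plusdot$. Holding the domains in a single layer while pushing the ranges into a disjoint second layer is precisely what reconciles them, since disjointness of domains is untouched by the shift. Equivalently, one could first invoke \Cref{prop}\eqref{first} to obtain a $(\dcupdot,\emptyset)$-representation $\theta_0$ and then apply the same range-shift $\set{\bigl((x,0),(y,1)\bigr):(x,y)\in a^{\theta_0}}$; the verification is the same.
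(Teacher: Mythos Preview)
Your proposal addresses only part~\eqref{mid}; part~\eqref{first} is not treated (indeed you invoke it at the end as if already proved). For part~\eqref{mid} your argument is correct and is essentially the paper's own: the paper picks a disjoint set $Y$ with a bijection $f:X\to Y$ and represents $a$ by the restriction $f\restr{a^\theta}$, which is exactly your map $(x,0)\mapsto(x,1)$ with $Y=X\times\set1$ and $f(x)=(x,1)$ (after renaming the original base $X$ to $X\times\set0$). Your verification is more explicit than the paper's ``easily seen'', but the construction is the same.
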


\begin{proof}
For part \eqref{first}, let $\sigma$ be one of the signatures in question and let $\c A$ be a partial algebra. Suppose  $\theta$ is a $\sigma$-representation of $\c A$ by sets over base  $X$.  Then the map $\rho$ defined by $a^\rho=\set{(x, x)\mid x\in a^\theta}$ is easily seen to be a $\sigma'$-representation of $\c A$ by partial functions. 

 Conversely, suppose $\rho$ is a $\sigma'$-representation of $\c A$ by partial functions over base $X$. Let $Y$ be a disjoint set of the same cardinality as $X$ and let $f:X\rightarrow Y$ be any bijection.  Define $\theta$ by $a^\theta=a^\rho\cup \set{(f(x),f(x))\mid x\in \operatorname{dom}(a^\rho)}$. Then it is easy to see that $\theta$ is \emph{another} $\sigma'$-representation of $\c A$ by partial functions. By construction, $\theta$ has the property that any $a^\theta$ and $b^\theta$  have disjoint domains if and only if they are disjoint. Hence $\theta$ is also a $\sigma$-representation of $\c A$ by sets.

 For part \eqref{mid}, let $\theta$ be a $(\cupdot, \emptyset)$-representation of the $(\plusdot, 0)$-reduct of $\c A$ over base set $X$. Let $Y$ be a disjoint set of the same cardinality as $X$ and let $f:X\rightarrow Y$ be any bijection.  The map $\rho$ defined by $a^\rho=f\restr{a^\theta}$ is easily seen to be a $(\dcupdot, \mid, \emptyset)$-representation of $\c A$.
\end{proof}

\begin{remark}\label{add0}In each of the following cases let the signature $\sigma_\emptyset$ be formed by the addition of $\emptyset$ to $\sigma$.
\begin{itemize}
\item
Let $\sigma$ be a signature containing $\cupdot$. A partial algebra $\c A$ is $\sigma$-representable if and only its reduct to the signature without $0$ is $\sigma_\emptyset$-representable and $\c A$ satisfies $0 \plusdot 0 = 0$.
\item
Let $\sigma$ be a signature containing $\dcupdot$. A partial algebra $\c A$ is $\sigma$-representable if and only its reduct to the signature without $0$ is $\sigma_\emptyset$-representable and $\c A$ satisfies $0 \plusdot 0 = 0$.
\item
Let $\sigma$ be a signature containing $\minusdot$. A partial algebra $\c A$ is $\sigma$-representable if and only its reduct to the signature without $0$ is $\sigma_\emptyset$-representable and $\c A$ satisfies $0 \negdot 0 = 0$.
\end{itemize}
Hence axiomatisations of representation classes for signatures without $\emptyset$ would  immediately yield axiomatisations for the case including $\emptyset$ also.
\end{remark}

We now define a version of complete representability. For a partial $(\plusdot, \ldots)$-algebra $\c A$, define a relation $\lesssim$ over $\c A$ by letting $a\lesssim b$ if and only if  either $a=b$ or there is $c\in\c A$ such that $a\plusdot c$ is defined and $a\plusdot c=b$.  By definition, $\lesssim$ is reflexive. If $\c A$ is $(\cupdot, \ldots)$-representable, then by elementary properties of sets, it is necessarily the case that if $(a\plusdot b)\plusdot c$ is defined then $a\plusdot(b\plusdot c)$ is also defined and equal to it, which is precisely what is required to see that $\lesssim$ is transitive. Antisymmetry of $\lesssim$ also follows by elementary properties of sets. Hence $\lesssim$ is a partial order.

\begin{definition}\label{completely}
A subset $S$ of a partial $(\plusdot, \ldots)$-algebra $\c A$ is \defn{pairwise combinable} if for all $s\neq t\in S$ the value $s\plusdot t$ is defined.  
A $(\cupdot, \ldots)$-representation $\theta$ of $\c A$ is \defn{$\lesssim$-complete} if for any pairwise-combinable subset $S$ of $\c A$ with a supremum $a$ (with respect to the order $\lesssim$) we have $a^\theta = \bigcup_{s\in S}s^\theta$.
\end{definition}

\begin{proposition}\label{prop:complete}
If $\c A$ is a finite partial $(\plusdot, \ldots)$-algebra then every $(\cupdot, \ldots)$-representation of  $\c A$ is $\lesssim$-complete.
\end{proposition}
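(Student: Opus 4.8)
The plan is to translate the abstract order $\lesssim$ into set inclusion under the representation and to use finiteness to realise the join of a pairwise-combinable set as a genuine element of $\c A$. Fix a $(\cupdot,\ldots)$-representation $\theta$ of $\c A$. The one fact I would isolate at the outset is that $a\lesssim b$ implies $a^\theta\subseteq b^\theta$: if $a=b$ this is clear, while if $a\plusdot c=b$ for some $c$ then $a^\theta\cupdot c^\theta=b^\theta$ is a disjoint union and in particular $a^\theta\subseteq b^\theta$. I would also note that whenever $s\plusdot t$ is defined the sets $s^\theta$ and $t^\theta$ are disjoint, so the images of the members of a pairwise-combinable set are pairwise disjoint.

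Let $S$ be pairwise combinable with supremum $a$. Since $\c A$ is finite, so is $S$; writing $S=\{s_1,\ldots,s_n\}$, the case $n=1$ is immediate (then $\sup S=s_1$), so I assume $n\ge2$. The heart of the argument is to show that the iterated disjoint sum exists in $\c A$. I would prove by induction on $k$ that $p_k:=s_1\plusdot\cdots\plusdot s_k$ is defined with $p_k^\theta=\bigcup_{i\le k}s_i^\theta$. For the step, $p_k^\theta$ is disjoint from $s_{k+1}^\theta$ because each $s_i^\theta$ is; hence $p_k^\theta\cupdot s_{k+1}^\theta$ is defined in the target algebra of sets, and since $\theta$ is an isomorphism it reflects definedness, so $p_k\plusdot s_{k+1}$ is defined in $\c A$ and carries the stated image. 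Putting $b:=p_n$ gives an element of $\c A$ with $b^\theta=\bigcup_{i}s_i^\theta$.

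I would then verify that $b$ is an upper bound of $S$. For each $i$ the subset $S\setminus\{s_i\}$ is still pairwise combinable, so the same construction produces $c_i\in\c A$ with $c_i^\theta=\bigcup_{j\neq i}s_j^\theta$; since $s_i^\theta$ and $c_i^\theta$ are disjoint with union $b^\theta$, the value $s_i\plusdot c_i$ is defined and, by injectivity of $\theta$, equals $b$, whence $s_i\lesssim b$. To finish, I squeeze $a^\theta$ from both sides. Because $a=\sup S$ and $b$ is an upper bound, $a\lesssim b$, so $a^\theta\subseteq b^\theta=\bigcup_i s_i^\theta$; because $a$ is an upper bound, each $s_i\lesssim a$, so $\bigcup_i s_i^\theta\subseteq a^\theta$. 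Hence $a^\theta=\bigcup_{s\in S}s^\theta$, which is exactly $\lesssim$-completeness.

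The only genuinely load-bearing step is the inductive construction of $b$, and this is where finiteness is indispensable: for an infinite pairwise-combinable set the images are still pairwise disjoint, but their union need not be named by any element of $\c A$, so even when the join exists abstractly it need not map to that union. Everything else is bookkeeping through the single implication $a\lesssim b\Rightarrow a^\theta\subseteq b^\theta$ together with the injectivity of $\theta$.
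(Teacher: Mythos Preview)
Your proof is correct and follows essentially the same approach as the paper's: enumerate the finite set $S$, build the iterated sum $s_1\plusdot\cdots\plusdot s_n$ by induction using that $\theta$ reflects definedness, and then squeeze $a^\theta$ between $\bigcup_i s_i^\theta$ and $(s_1\plusdot\cdots\plusdot s_n)^\theta$ via the implication $b_1\lesssim b_2\Rightarrow b_1^\theta\subseteq b_2^\theta$. You are in fact slightly more careful than the paper, which simply asserts that $s_1\plusdot\cdots\plusdot s_n$ is ``clearly an upper bound for $S$''; your explicit construction of the complementary sums $c_i$ and appeal to injectivity of $\theta$ fills in exactly that step.
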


\begin{proof}
Let $\theta$ be a $(\cupdot, \ldots)$-representation of $\c A$ and let $S \subseteq \c A$ be pairwise combinable with supremum $a$. As $S$ is pairwise combinable and $\theta$ is a $(\cupdot, \ldots)$-representation, we have that $s^\theta \cupdot t^\theta$ is defined for all $s \neq t \in S$. Then by the definition of $\cupdot$, the set $\{ s^\theta \mid  s \in S\}$ is pairwise disjoint. As $\c A$ is finite, $S$ must be too, so $S = \{s_1, \ldots, s_n\}$, say. By induction, for each $k$ we have that $s_1 \plusdot \ldots \plusdot s_k$ is defined and $(s_1 \plusdot \ldots \plusdot s_k)^\theta = \bigcup_{i =1}^k s_i^\theta$. Hence $(s_1 \plusdot \ldots \plusdot s_n)^\theta = \bigcup_{s\in S}s^\theta$. It is clear that for any $b_1, b_2 \in \c A$ the implication $b_1 \lesssim b_2 \implies b_1^\theta \subseteq b_2^\theta$ holds. Therefore $a^\theta$ must be a superset of each $s_i^\theta$ and must be a subset of $b^\theta$ for any upper bound $b$ of $S$. But $s_1 \plusdot \ldots \plusdot s_n$ is clearly an upper bound for $S$ so we conclude that $a^\theta = (s_1 \plusdot \ldots \plusdot s_n)^\theta = \bigcup_{s\in S}s^\theta$ as required.
\end{proof}

Finally, a word about logic. In our meta-language, that is, English, we can talk in terms of partial operations and partial algebras, which is what we have been doing so far. However, the traditional presentation of first-order logic does not include partial function symbols. Hence, in order to examine the first-order logic of our partial algebras we must view them formally as relational structures.

Let $\c A = (A, \plusdot)$ be a partial algebra. From the partial binary operation $\plusdot$ over $A$ we may define a ternary relation $J$ over $A$ by letting $J(a, b, c)$ if and only if $a\plusdot b$ is defined and equal to $c$.  Since a partial operation is (at most) single valued, we have
\begin{equation}\label{eq:singlev}
J(a, b, c)\wedge J(a, b, d)\;\rightarrow\; c=d.
\end{equation}
  Conversely, given any ternary relation $J$ over $A$ satisfying \eqref{eq:singlev}, we may define a partial operation $\plusdot$ over $A$ by letting $a\plusdot b$ be defined if and only if there exists $c$ such that $J(a, b, c)$ holds (unique, by \eqref{eq:singlev}) and when this is the case we let $a\plusdot b=c$.  The definition of $J$ from $\plusdot$ and the definition of $\plusdot$ from $J$ are clearly inverses. Similarly, if $\negdot$ is in the signature we can define a corresponding ternary relation $K$ in the same way.

To remain in the context of classical first-order logic we adopt languages that  feature neither $\plusdot$ nor $\negdot$ but have ternary relation symbols $J$ and/or $K$ as appropriate (as well as equality).  In the relational language $\lang L(J)$, we may write $\exists a\plusdot b$ as an abbreviation of the formula $\exists c J(a, b, c)$ and write $a\plusdot b=c$ in place of $J(a, b, c)$. Similarly for $\negdot$ and $K$.

\section{Axiomatisability}\label{Axiomatisability}

In this section we show there exists a first-order $\lang L(J)$-theory  that axiomatises the class $\class J$ of partial $\plusdot$-algebras with $\cupdot$-representations. Hence $\class J$, viewed as a class of $\lang L(J)$-structures, is elementary. We do the same for the class $\class K$ of partial $\negdot$-algebras with $\minusdot$-representations (as sets) and the class $\class L$ of partial $(\plusdot, \negdot)$-algebras with $(\cupdot, \minusdot)$-representations.

\begin{definition}\label{subalgebra}
If $\c A_1 \subseteq \c A_2$ are similar partial algebras and the inclusion map is a partial-algebra embedding then we say that $\c A_1$ is a \defn{partial-subalgebra} of $\c A_2$.
Let $\c A_i=(A_i, \Omega_0, \ldots)$ be partial algebras, for $i\in I$, and let $U$ be an ultrafilter over $I$.  The \defn{ultraproduct} $\Pi_{i\in I}\c A_i/U$ is defined in the normal way, noting that, for example, $[(a_i)_{i\in I}]\plusdot[(b_i)_{i\in I}]$ (where $a_i, b_i\in\c A_i$ for all $i\in I$) is defined in the ultraproduct if and only if $\set{i \in I\mid  a_i\plusdot b_i\mbox{ is defined in $\c A_i$}}\in U$. Ultrapowers and ultraroots also have their normal definitions: an \defn{ultrapower} is an ultraproduct of identical partial algebras and $\c A$ is an \defn{ultraroot} of $\c B$ if $\c B$ is an ultrapower of $\c A$.
\end{definition}

It is clear that a partial-subalgebra of $\c A$ is always a substructure of $\c A$, as  relational structures, and also that any substructure of $\c A$ is a partial algebra, that is, satisfies \eqref{eq:singlev}. However, in order for a relational substructure of $\c A$ to be a partial-subalgebra  it is necessary that it be closed under the partial operations, wherever they are defined in $\c A$.

It is almost trivial that the class of $\cupdot$-representable partial algebras is closed under partial-subalgebras. This class is not however closed under substructures. Indeed it is easy to construct a partial $\plusdot$-algebra $\c A$ with a disjoint-union representation but where an $\lang L(J)$-substructure of $\c A$ has no disjoint-union representation. We give an example now.

\begin{example}
The collection $\powerset\{1,2,3\}$ of sets forms a disjoint-union partial algebra of sets and so is trivially a $\cupdot$-representable partial $\plusdot$-algebra, if we identify $\plusdot$ with $\cupdot$.

The substructure with domain $\powerset\{1,2,3\} \setminus \{1,2,3\}$ is not $\cupdot$-representable, because $\{1\} \plusdot \{2\}, \{2\} \plusdot \{3\}$ and $\{3\} \plusdot \{1\}$ all exist, so $\{1\}, \{2\}, \{3\}$ would have to be represented by pairwise disjoint sets. But then $\{1,2\} \plusdot \{3\}$ would have to exist, which is not the case.
\end{example} 

We obtain the following corollary.

\begin{corollary}\label{notuniv}
The isomorphic closure of the class of disjoint-union partial algebras of sets is not axiomatisable by a universal first-order $\lang L(J)$-theory.
\end{corollary}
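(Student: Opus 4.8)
The plan is to use the classical observation that universal first-order sentences are preserved under passage to substructures, so that any isomorphism-closed class axiomatised by a universal $\lang L(J)$-theory must itself be closed under $\lang L(J)$-substructures. I would then argue the contrapositive: since we have just exhibited a member of the class with a substructure lying outside the class, the class cannot be universally axiomatisable.

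First I would recall why the preservation holds, as this is the only general principle invoked. If $\varphi = \forall \bar x\, \psi(\bar x)$ with $\psi$ quantifier-free, and $\c B$ is a substructure of $\c A$ with $\c A \models \varphi$, then for every tuple $\bar b$ from $\c B$ the quantifier-free formula $\psi(\bar b)$ takes the same truth value in $\c B$ as in $\c A$ (quantifier-free formulas are absolute between a structure and any of its substructures), and that value is ``true'' because $\c A \models \varphi$; hence $\c B \models \varphi$. Consequently a universal theory $T$ satisfied by $\c A$ is satisfied by every substructure of $\c A$, so the model class of $T$ is closed under substructures. Next I would identify the class under consideration: the isomorphic closure of the class of disjoint-union partial algebras of sets is exactly the class $\class J$ of partial $\plusdot$-algebras admitting a $\cupdot$-representation, since membership is precisely the existence of an isomorphism onto such an algebra of sets.

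At this point the work is already done by the preceding Example, which I would simply invoke: $\powerset\set{1,2,3}$, identifying $\plusdot$ with $\cupdot$, lies in $\class J$, whereas its $\lang L(J)$-substructure on $\powerset\set{1,2,3}\setminus\set{\set{1,2,3}}$ admits no $\cupdot$-representation and so is not in $\class J$. Thus $\class J$ is not closed under substructures, and therefore, by the preservation principle above, $\class J$ cannot be the model class of any universal $\lang L(J)$-theory. I do not anticipate a genuine obstacle here, as the entire content is carried by the Example; the only points warranting a line of care are the routine verifications that deleting the top element yields a genuine relational substructure (the restriction of $J$ to the smaller domain is the induced relation) and that ``not $\cupdot$-representable'' is by definition the same as ``not a member of the isomorphic closure''.
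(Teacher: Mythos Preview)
Your proposal is correct and follows exactly the paper's intended argument: the preceding Example shows that $\class J$ fails to be closed under $\lang L(J)$-substructures, and since universal theories are preserved under substructures this immediately yields the corollary. The paper gives no further proof beyond ``We obtain the following corollary,'' so your write-up simply makes explicit what the paper leaves implicit.
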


Returning to our objective of proving that the classes $\class J, \class K$ and $\class L$ are elementary, this could be achieved by showing that they are closed under ultraproducts and ultraroots. However this is not entirely straightforward, since  many of the relevant model-theoretic results are known for total operations only.  Instead, to apply these known results, we first describe a way to view an arbitrary partial algebra as a total algebra. Then, having established elementarity of the resulting class of total algebras we describe how to convert back to an axiomatisation of the partial algebras.

\begin{definition}
Let $\c A = (A, (\Omega_i)_{i < \beta})$ be a partial algebra. The \defn{totalisation} of $\c A$ is the algebra $\c A^+ =(A\cup\set{\infty}, \infty, (\Omega_i)_{i < \beta} )$, where $\infty\not\in A$ and for each $i$ the interpretation of $\Omega_i$ in $\c A^+$ agrees with the interpretation in $\c A$ whenever the latter is defined, and in all other cases returns $\infty$. The totalisation of a class $\class C$ of similar partial algebras is the class $\class C^+ = \set{\c A^+ \mid \c A\in\class C}$ of total algebras.
\end{definition}

Inversely to totalisation, suppose we have a total algebra $\c B=(B, \infty, (\Omega_i)_{i < \beta} )$ where for each $i<\beta$, if any element of the $\alpha(i)$-tuple $\bar b$ is $\infty$ then $\Omega_i(\bar b)=\infty$. Then we may define a partial algebra $\c B^-=(B\setminus\set\infty, (\Omega_i)_{i < \beta})$ where each $\Omega_i(b_1, \ldots, b_n)$ is defined in $\c B^-$ if and only if $\Omega_i(b_1, \ldots, b_n)\neq\infty$ in $\c B$, in which case it has the same value as in $\c B$. Clearly for any partial algebra $\c A$ we have $(\c A^+ )^-=\c A$ and for any total algebra $\c B$ with a suitable $\infty\in\c B$ we have $(\c B^-)^+ =\c B$. 

 In the following, we show that each of the classes $\class J^+, \class K^+$ and  $\class L^+$ is closed under both ultraproducts and subalgebras and hence is universally axiomatisable in $\lang L(\infty, \plusdot)$, $\lang L(\infty, \negdot)$ and $\lang L(\infty, \plusdot, \negdot)$ respectively.  We then give a translation from the universal formulas defining $\class J^+$ to a set of $\lang L(J)$-formulas that defines $\class J$. Similarly for the other two cases.

We will be using the notion of pseudoelementarity, and since there are various possible equivalent definitions of this, we state the one we wish to use. It can be found, for example, as \cite[Definition~9.1]{HH:book}.

\begin{definition}\label{def:pseudo}
Given an unsorted first-order language $\lang L$, a class $\class C$ of $\lang L$-structures is \defn{pseudo\-elementary} if there exist
\begin{itemize}
\item
a two-sorted first-order language $\lang L'$, with sorts $\mathit{algebra}$ and $\mathit{base}$, containing $\mathit{algebra}$-sorted copies of all symbols of $\lang L$,
\item
an $\lang L'$-theory $T$,
\end{itemize}
such that $\class C = \{M^\mathit{algebra}\restr{\lang L} \mid M \models T\}$.
\end{definition}

\begin{lemma}\label{univ}
The class $\class J^+$ is universally axiomatisable in $\lang L(\infty, \plusdot)$, 
the class $\class K^+$ is universally axiomatisable in $\lang L(\infty, \negdot)$ and
the class $\class L^+$ is universally axiomatisable in $\lang L(\infty, \plusdot, \negdot)$.
\end{lemma}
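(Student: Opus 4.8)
The plan is to invoke the classical characterisation of universal classes: a class of structures closed under isomorphism is axiomatisable by a set of universal first-order sentences precisely when it is closed under substructures and under ultraproducts. So for each of $\class J^+$, $\class K^+$ and $\class L^+$ I would verify these two closure conditions (closure under isomorphism being immediate from the fact that the underlying representation classes are closed under isomorphism and totalisation respects isomorphism). The three arguments run in parallel---only the clauses linking the abstract operation to $\cupdot$, $\minusdot$ differ---so I would present $\class J^+$ in full and note the variations.

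First, closure under substructures. Let $\c B$ be a substructure of some $\c A^+\in\class J^+$ in $\lang L(\infty,\plusdot)$. Then $\c B$ contains $\infty$ and is closed under $\plusdot$, and since $\c A^+$ returns $\infty$ whenever an argument is $\infty$, so does $\c B$; hence $\c B^-$ is a partial algebra with $(\c B^-)^+=\c B$. For $a,b\in\c B^-$ the value $a\plusdot b$ is defined in $\c B^-$ exactly when $a\plusdot b\neq\infty$ in $\c A^+$, that is, exactly when it is defined in $\c A=(\c A^+)^-$, with equal values; so $\c B^-$ is a partial-subalgebra of $\c A$. As the class of $\cupdot$-representable partial algebras is closed under partial-subalgebras (just restrict a representation of $\c A$ to $\c B^-$), $\c B^-$ is $\cupdot$-representable, and therefore $\c B=(\c B^-)^+\in\class J^+$.

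Second, closure under ultraproducts, which I would obtain by showing $\class J^+$ is pseudoelementary, since pseudoelementary classes are closed under ultraproducts: form the ultraproduct of the witnessing two-sorted models (which again models the two-sorted theory by \Los's theorem) and use that reducts and sort-restrictions commute with ultraproducts. As witness I take $\lang L'$ with sorts $\mathit{algebra}$ and $\mathit{base}$, carrying the algebra-sorted symbols $\infty$ and $\plusdot$ together with a relation symbol $\in$ relating $\mathit{base}$ to $\mathit{algebra}$. Writing $E(a)$ for the extension $\{x: x\in a\}$ of an algebra element $a$, the theory $T$ states that $\plusdot$ is $\infty$-absorbing, that $E$ is injective on the non-$\infty$ elements, and that for non-$\infty$ $a,b$ one has $a\plusdot b\neq\infty$ iff $E(a)\cap E(b)=\emptyset$ and, when this holds, $E(a\plusdot b)=E(a)\cup E(b)$. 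I would then check both inclusions of \Cref{def:pseudo}: the algebra reduct of any $M\models T$ is the totalisation of a $\cupdot$-representable partial algebra, namely $(M^{\mathit{algebra}}\restr{\lang L})^-$, which $E$ represents by sets (the image being closed under $\cupdot$ for free, as disjointness of $E(a),E(b)$ forces $a\plusdot b$ to be defined with $E(a\plusdot b)=E(a)\cup E(b)$), so it lies in $\class J^+$; and conversely any $\c A^+\in\class J^+$ expands to a model of $T$ by taking the base and membership relation from a representation of $\c A$. For $\class K^+$ one replaces the last clauses by: $a\negdot b\neq\infty$ iff $E(b)\subseteq E(a)$, and then $E(a\negdot b)=E(a)\setminus E(b)$; for $\class L^+$ one keeps both families of clauses.

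The step I expect to be most delicate is pinning down the theory $T$ so that its algebra reduct is \emph{exactly} the relevant totalisation: the definedness biconditionals must be stated in the strong form (non-disjoint, respectively non-nested, extensions must \emph{force} the value to be $\infty$, not merely permit it), and one must confirm that the extension map lands in an actual partial $\sigma$-algebra of sets, i.e.\ that its image is genuinely closed under the set operations, rather than being merely an abstract isomorph of the operation table. Once $T$ is correctly set up, the transfer of ultraproduct-closure from the two-sorted models and the substructure argument are routine, and the characterisation theorem finishes the proof for all three signatures.
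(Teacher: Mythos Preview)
Your proposal is correct and follows essentially the same strategy as the paper: establish pseudoelementarity via a two-sorted theory (with the same clauses you describe) to obtain closure under ultraproducts, verify closure under substructures, and invoke the classical characterisation of universal classes. The only notable difference is that the paper deduces substructure-closure from the syntactic form of its two-sorted axioms (no quantification over algebra-sorted variables, and $\plusdot$ already present in the target language), whereas you argue it directly by showing $\c B^-$ is a partial-subalgebra of $\c A$ and appealing to closure of the representation class under partial-subalgebras; both routes are valid and yield the same conclusion.
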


\begin{proof}
We start with $\class J^+$. By definition, $\class J^+$ is closed under isomorphism. We first show that $\class J^+ $ is pseudoelementary, hence also closed under ultraproducts. 

Consider a two-sorted language, with an algebra sort and a base sort. The signature consists of a binary operation $\plusdot$ on the algebra sort, an algebra-sorted constant $\infty$ and a binary predicate $\in$, written infix, of type $ \mathit{base}\times \mathit{algebra}$. Consider the formulas
\begin{align*}
a\plusdot\infty = \infty \plusdot a = \infty\\
(a\neq b)\wedge (a\neq\infty)\wedge (b\neq \infty)&\rightarrow \exists x ((x \in a\wedge x \not\in b)\vee(x \not\in a\wedge x \in b))\\
 (a\neq\infty)\wedge (b\neq\infty)&\rightarrow ((a\plusdot b=\infty)\biimp\exists x (x \in a\wedge x \in b))\\ 
(a\plusdot b\neq \infty)&\rightarrow ((x \in a \plusdot b) \biimp (x \in a \vee x \in b))
\end{align*}
where $a, b, c$ are algebra-sorted variables and $x$ is a base-sorted variable.  

These formulas merely state that the base-sorted elements form the base of a representation of the non-$\infty$ elements of algebra sort and that $\infty$ behaves as it should for an algebra in $\class J^+$.
Hence $\class J^+ $ is the class of $(\infty, \plusdot )$-reducts  of restrictions of models of the formulas to algebra-sorted elements, that is, $\class J^+$ is pseudoelementary. Hence $\class J^+$ is closed under ultraproducts.

Since the only function symbol, $\plusdot $, in our defining formulas is already in ${\lang L(\infty, \plusdot)}$ and there is no quantification of algebra-sorted variables, $\class J^+$ is closed under substructures.  A consequence of this is that $\class J^+$ is closed under ultraroots, by the simple observation that the diagonal map embeds any ultraroot into its ultrapower.

We now know that $\class J^+$ is closed under isomorphism, ultraproducts and ultraroots. This is a well-known algebraic characterisation of elementarity (for example see \cite[Theorem 6.1.16]{ChK90}). Then as $\class J^+$ is elementary and closed under substructures it is universally axiomatisable, by the \Los-Tarski preservation theorem.

For $\class K^+$ and $\class L^+$ the same line of reasoning applies. Each is by definition closed under isomorphism. For $\class K^+$ we show pseudoelementarity and closure under substructures using the formulas
\begin{align*}
a\negdot\infty = \infty \negdot a = \infty\\
(a\neq b)\wedge (a\neq\infty)\wedge (b\neq \infty)&\rightarrow \exists x ((x \in a\wedge x \not\in b)\vee(x \not\in a\wedge x \in b))\\
 (a\neq\infty)\wedge (b\neq\infty)&\rightarrow ((a\negdot b=\infty)\biimp\exists x (x \not\in a\wedge x \in b))\\ 
(a\negdot b\neq \infty)&\rightarrow ((x \in a \negdot b) \biimp (x \in a \wedge x \not\in b))
\end{align*}
and for $\class L^+$ we do the same using the union of the formulas for $\class J^+$ and the formulas for $\class K^+$.
\end{proof}

\begin{proposition}\label{lem:elementary}
Let $\class C$ be a class of partial algebras of the signature $(\Omega_i)_{i < \beta}$, which we view as relational structures over the signature $(R_i)_{i < \beta}$, where for each $i$ the arity of $R_i$ is one greater than that of $\Omega_i$. Suppose  $\class C^+$ is universally axiomatisable in the language $\lang L(\infty, (\Omega_i)_{i < \beta})$. Then $\class C$ is axiomatisable in the language $\lang L((R_i)_{i < \beta})$.
\end{proposition}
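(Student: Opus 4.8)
The plan is to translate, sentence by sentence, the given universal $\lang L(\infty, (\Omega_i)_{i<\beta})$-axiomatisation of $\class C^+$ into an $\lang L((R_i)_{i<\beta})$-theory axiomatising $\class C$. The bridge is the bijection $\c A \mapsto \c A^+$ between partial algebras and their totalisations: since $(\c A^+)^- = \c A$, we have $\c A \in \class C$ if and only if $\c A^+ \in \class C^+$, which (as $\class C^+$ has by hypothesis a universal axiomatisation $\Sigma$) holds if and only if $\c A^+ \models \Sigma$. So it suffices, for each universal sentence $\phi \in \Sigma$, to produce an $\lang L((R_i)_{i<\beta})$-sentence $\phi^\dagger$ such that $\c A \models \phi^\dagger$ if and only if $\c A^+ \models \phi$, for every partial algebra $\c A$. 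Together with the single-valuedness axioms \eqref{eq:singlev} for each $R_i$ (which guarantee that any model is a genuine partial algebra, so that $\c A^+$ is well defined and models of the whole theory are exactly the members of $\class C$), the set $\set{\phi^\dagger \mid \phi \in \Sigma}$ is then the desired axiomatisation.

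First I would deal with the constant $\infty$, which has no counterpart in the domain of $\c A$. Write $\phi = \forall x_1 \cdots \forall x_n\, \psi$ with $\psi$ quantifier-free. In $\c A^+$ each $x_i$ ranges over $A \cup \set\infty$, so I split into the $2^n$ cases recording which variables take the value $\infty$: for $S \subseteq \set{1, \ldots, n}$ let $\psi_S$ be $\psi$ with $x_i$ replaced by the symbol $\infty$ for $i \in S$, and set $\phi^\dagger := \bigwedge_{S} \forall (x_i)_{i \notin S}\, (\psi_S)^\dagger$, where the surviving quantifiers now range only over $A$. It then remains to translate a quantifier-free matrix whose only leaves are variables (necessarily with $i \notin S$) and occurrences of the symbol $\infty$. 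As the only atoms of $\lang L(\infty, (\Omega_i))$ are equalities between terms and the Boolean connectives commute with the translation, everything reduces to translating an equality $t = s$; the crux is to track, for each term $t$, whether its value in $\c A^+$ is a genuine element of $A$ or is $\infty$. I would do this by recursion on $t$, defining two $\lang L((R_i))$-formulas: $\delta_t(z)$, meaning ``$t$ is defined with value $z \in A$'', and $\nu_t$, meaning ``$t$ evaluates to $\infty$''. For a variable $x$ these are $z = x$ and a false formula; for the constant $\infty$ they are a false formula and a true formula; and at a compound term $t = \Omega_i(t_1, \ldots, t_{\alpha(i)})$ I would put
\[ \delta_t(z) := \exists y_1 \cdots \exists y_{\alpha(i)} \Bigl( \textstyle\bigwedge_{k \le \alpha(i)} \delta_{t_k}(y_k) \wedge R_i(y_1, \ldots, y_{\alpha(i)}, z) \Bigr), \]
\[ \nu_t := \textstyle\bigvee_{k \le \alpha(i)} \nu_{t_k} \;\vee\; \exists y_1 \cdots \exists y_{\alpha(i)} \Bigl( \textstyle\bigwedge_{k \le \alpha(i)} \delta_{t_k}(y_k) \wedge \neg \exists z\, R_i(y_1, \ldots, y_{\alpha(i)}, z) \Bigr), \]
reflecting exactly how $\Omega_i$ acts in the totalisation: the result is $\infty$ precisely when some argument is $\infty$ or when all arguments are genuine but $\Omega_i$ is undefined on them. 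Finally I would translate the atom by $(t = s)^\dagger := (\nu_t \wedge \nu_s) \vee \exists z (\delta_t(z) \wedge \delta_s(z))$.

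The verification is a routine induction on term structure establishing the correctness lemma: for any valuation of the free variables into $A$, the term $t$ takes the value $\infty$ in $\c A^+$ iff $\c A \models \nu_t$, and takes a value $c \in A$ iff $\c A \models \delta_t(c)$; single-valuedness \eqref{eq:singlev} ensures these two alternatives are mutually exclusive and exhaustive, matching the totality of $\Omega_i$ in $\c A^+$. Propagating this up through the Boolean structure of $\psi$ and the case-split over $S$ yields $\c A \models \phi^\dagger$ if and only if $\c A^+ \models \phi$, completing the argument. I expect the main obstacle to be organisational rather than deep: keeping the bookkeeping of the $\infty$-cases straight, since $\c A$ offers no element to relativise against, forces the constant $\infty$ to be simulated entirely through the undefinedness clauses $\neg\exists z\, R_i(\ldots, z)$. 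It is worth noting that these clauses introduce genuine existential quantification, so $\phi^\dagger$ need not be universal even when $\phi$ is---which is exactly what one must expect, given that \Cref{notuniv} shows the corresponding class is in general not universally axiomatisable.
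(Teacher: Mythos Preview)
Your proposal is correct and follows the same overall strategy as the paper: translate each universal axiom of $\class C^+$ into an $\lang L((R_i))$-sentence by tracking which (sub)terms evaluate to $\infty$ in the totalisation. The technical implementations differ, however. The paper introduces, for each quantifier-free $\psi$, a fresh variable $v_t$ for every subterm $t$ of $\psi$ and then forms a conjunction over all \emph{grounded} subsets $D$ of these variables---that is, over all internally consistent choices of which subterms are $\neq\infty$---each conjunct having the shape $\varphi(D)\to\psi_D$, where $\varphi(D)$ asserts that the $R_i$ hold or fail exactly as $D$ prescribes and $\psi_D$ is $\psi$ with each equation replaced by $v_s=v_t$, $\top$, or $\bot$ according to $D$. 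Your translation is instead compositional: you build definedness formulas $\delta_t(z)$ and undefinedness formulas $\nu_t$ by recursion on term structure and translate each equation directly. Both routes are standard encodings of partiality; yours is arguably more natural and avoids the auxiliary term-variables, while the paper's exhibits the translated sentence in a simple syntactic normal form. You are also more explicit than the paper on two points: you add the single-valuedness axioms \eqref{eq:singlev} so that models are genuine partial algebras, and your case-split over $S$ (with the conjunct $S=\{1,\dots,n\}$ carrying the content when $\c A$ is empty) handles the empty structure uniformly, whereas the paper treats emptiness by a separate closing case distinction on whether the empty algebra lies in $\class C$.
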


\begin{proof}
Let $\Sigma^+ $ be a universal axiomatisation of $\class C^+$ in the language $\lang L(\infty, (\Omega_i)_{i < \beta})$.   Since it is the validity of all the formulas in $\Sigma^+ $ that defines $\class C^+ $ we may assume that each axiom in $\Sigma^+ $ is quantifier free.  We define a translation $^-$ from $\lang L(\infty, (\Omega_i)_{i < \beta})$ to $\lang L((R_i)_{i < \beta})$ such that
\begin{equation}\label{eq:tr} \c A^+ \models \psi\iff \c A\models  \psi^-\end{equation}
for any \emph{nonempty} partial algebra $\c A$ of the signature $(\Omega_i)_{i < \beta}$ and any quantifier-free $\lang L(\infty, (\Omega_i)_{i < \beta})$-formula $\psi$.

    Let $V(\psi)$ be the finite set of variables occurring in $\psi$ and let $S(\psi)$ be the set of subterms of $\psi$. We may also write $V(t)$ and $S(t)$ to denote the set of all variables and subterms of the term $t$.  For any  assignment $\rho:V(\psi)\rightarrow \c A^+ $ and $t\in S(\psi)$ let $[t]^\rho$ denote the evaluation of $t$ under $\rho$ in $\c A^+ $. Let $v$ be any injective mapping from $S(\psi)$ to our set of first-order variables, mapping the term $s\in S(\psi)$ to the variable $v_s$ and satisfying $v_u=u$ for  all $u\in V(\psi)$. Let $V(\psi)^*=\set{v_t\mid t\in S(\psi)}$, so $V(\psi)^*\supseteq V(\psi)$. A \defn{grounded} subset $D\subseteq V(\psi)^*$ satisfies
\begin{itemize}\larger
\item $v_\infty\not\in D,$
\item $ v_{\Omega_i(t_1, \ldots, t_n)}\in D \implies v_{t_1},\ldots, v_{t_n}\in D,$ {\normalsize for any}  $\Omega_i(t_1, \ldots, t_n)\in S(\psi)\mbox{.}$
\end{itemize}
Informally, each grounded $D$ determines a partition of the subterms into `defined' terms $t$ (when $v_t\in D$) and `undefined' terms $s$ (when $v_s\in V(\psi)^*\setminus D$), in a way that is consistent with the structure of the terms.

For any subset $D\subseteq V(\psi)^*$ define
\begin{align*} \varphi(D)=\bigwedge_{\mathclap{\substack{v_{t_1}, \ldots, v_{t_n}\in D, \\ v_{\Omega_i(t_1, \ldots, t_n)}\in D}}}\;R_i(v_{t_1},\ldots, v_{t_n},  v_{\Omega_i(t_1, \ldots, t_n)})\quad\quad\wedge\quad\quad\bigwedge_{\mathclap{\substack{v_{t_1}, \ldots, v_{t_n}\in D,\\ v_{\Omega_i(t_1, \ldots, t_n)}\not\in D}}}\;\forall w\neg R_i(v_{t_1},\ldots, v_{t_n},  w)
\end{align*}
where $w$ is a new variable.
For any equation $s=t$ occurring in $\psi$ define
\[ (s=t)_D\;\mbox{ by }\; \left\{\begin{array}{ll}v_s=v_t&\mbox{if } v_s, v_t\in D\\
\top&\mbox{if }v_s, v_t\not\in D\\
\bot&\mbox{otherwise}
\end{array}
\right.\] 
and then let $\psi_D$ be obtained from $\psi$ by replacing each equation $s=t$ by $(s=t)_D$. Translate $\psi$ to the $\lang L(J)$-formula 
\[\psi^-=\bigwedge_{\mathclap{\substack{D\,\subseteq\, V(\psi)^* \\ D \text{ grounded}}}}\;( \varphi(D) \rightarrow \psi_{D})\mbox{.}\]

We must prove \eqref{eq:tr}.
Suppose $\psi$ is not valid in $\c A^+ $, say $\rho:V(\psi)\rightarrow\c A^+ $ is an assignment such that $\c A^+ ,\rho\not\models\psi$.  Let $\rho^*:V(\psi)^*\rightarrow\c A$ satisfy $\rho^*(v_t)=[t]^\rho$ for any $t\in S(\psi)$ provided $[t]^\rho\neq\infty$ (else $\rho^*(v_t)\in\c A$ is arbitrary---possible since $\c A$ is nonempty) and let $D=\set{v_t\in V(\psi)^*\mid  [t]^\rho\neq\infty}$.  Then $D$ is grounded,  $\c A, \rho^*\models\varphi(D)$ and  by formula induction  we get $\c  A^+ , \rho\models\chi\iff \c A, \rho^*\models\chi_D$ for any subformula $\chi$ of $\psi$. So $\c A, \rho^*\not\models \varphi(D)\rightarrow\psi_D$ and therefore $\psi^-$ is not valid in $\c A$.

Conversely, suppose $\psi^-$ is not valid in $\c A$, so there is a grounded subset $D\subseteq V(\psi)^*$ and a variable assignment $\mu:V(\psi^-)\rightarrow\c A$ such that  $\c A, \mu\models\varphi(D)\wedge\neg\psi_D$. As $\c A$ is nonempty we may extend $\mu$ to an assignment $\lambda:V(\psi)^*\rightarrow\c A$ and we will have  $\c A, \lambda\models\varphi(D)\wedge\neg\psi_D$. Define $\lambda^+ :V(\psi)\rightarrow\c A^+ $ by
\[ \lambda^+ (u)=\left\{\begin{array}{ll}\lambda(u)&u\in D\\ \infty &u\in V(\psi)\setminus D.\end{array}\right.\] 
Since $\c A, \lambda\models\varphi(D)$ and $\lambda^+ $ agrees with $\lambda$ over $D\cap V(\psi)$ we have 
\begin{equation}\label{eq:algplus }
[t]^{\lambda^+ }=\left\{\begin{array}{ll}
\lambda(v_t)&v_t\in D\\
\infty&v_t\not\in D.
\end{array}\right.
\end{equation}  We claim that
\begin{equation}\label{eq:s=t}
\c A^+ , \lambda^+ \models \chi\iff\c A, \lambda\models \chi_D
\end{equation}
for any subformula $\chi$ of $\psi$.  For the base case let $\chi$ be an equation $(s=t)$.
If $v_s, v_t\in D$ then $\chi_D$ is $v_s=v_t$ and \eqref{eq:s=t} holds, by \eqref{eq:algplus }.  If $v_s\in D$ but $v_t\not\in D$ then $\chi_D$ is $\bot$ and  $\infty=[t]^{\lambda^+ }\neq[s]^{\lambda^+}\in\c A$ so both sides of \eqref{eq:s=t} are false. The case where $v_t\in D$ but $v_s\not\in D$ is similar.  Finally, if $v_s, v_t\not\in D$ then $\chi_D$ is $\top$ and $ [s]^{\lambda^+}= [t]^{\lambda^+}=\infty$ so both sides of \eqref{eq:s=t} are true.    Now \eqref{eq:s=t} follows for all subformulas $\chi$ of $\psi$, by a simple structural induction.  Since $\c A, \lambda\not\models\psi_D$ we deduce that $\c A^+ , \lambda^+ \not\models\psi$, so $\psi$ is not valid in $\c A^+ $.  This completes the proof of \eqref{eq:tr}.

If $\c A$ is nonempty, we have 
\[\c A \in \class C \iff \c A^+ \in \class C^+ \iff \c A^+ \models \Sigma^+ \iff \c A \models \set{\psi^-\mid \psi\in\Sigma^+ }.\]
So if the empty partial algebra is in $\class C$ then $\set{\forall x \psi^-\mid \psi\in\Sigma^+ }$ is an axiomatisation of $\class C$. If the empty partial algebra is \emph{not} in $\class C$ then  $\set{\exists x \top \wedge\psi^-\mid \psi\in\Sigma^+ }$ is an axiomatisation of $\class C$.
\end{proof}

\begin{theorem}
Let $\sigma$ be any one of the signatures $(\cupdot)$, $(\minusdot)$ or $(\cupdot, \minusdot)$. The class of partial algebras $\sigma$-representable as sets, viewed as a class of relational structures, is elementary.
\end{theorem}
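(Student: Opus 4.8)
The plan is to obtain the result as an immediate combination of \Cref{univ} and \Cref{lem:elementary}, since between them these two results already carry all the genuine content. First I would record the identifications made at the start of this section: the class of partial algebras $(\cupdot)$-representable as sets is precisely $\class J$, the class $(\minusdot)$-representable as sets is $\class K$, and the class $(\cupdot, \minusdot)$-representable as sets is $\class L$, each understood as a class of relational structures over the appropriate signature ($(J)$, $(K)$, or $(J, K)$). By the definition of the totalisation of a class, $\class J^+$ is the totalisation of $\class J$, and likewise $\class K^+$ and $\class L^+$ are the totalisations of $\class K$ and $\class L$; so the three classes named in the theorem are exactly the three whose totalisations are treated in \Cref{univ}.

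Next I would invoke \Cref{univ} to supply the hypothesis required by \Cref{lem:elementary}: that lemma gives that $\class J^+$ is universally axiomatisable in $\lang L(\infty, \plusdot)$, that $\class K^+$ is universally axiomatisable in $\lang L(\infty, \negdot)$, and that $\class L^+$ is universally axiomatisable in $\lang L(\infty, \plusdot, \negdot)$. I would then apply \Cref{lem:elementary} once for each class. For $\class J$, take $\class C = \class J$ with the single binary operation $\plusdot$ and the single ternary relation symbol $J$, so that the arity of $J$ is one greater than that of $\plusdot$ as the proposition demands; the hypothesis that $\class C^+$ is universally axiomatisable in $\lang L(\infty, \plusdot)$ is then exactly what \Cref{univ} provides, and the proposition yields that $\class J$ is axiomatisable in $\lang L(J)$, that is, elementary. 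The cases $\class K$ (with $\negdot$ and $K$) and $\class L$ (with the pair $(\plusdot, \negdot)$ and the pair $(J, K)$) are handled identically.

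I do not expect a substantial obstacle to remain at this stage: the difficult work is performed inside the two results being composed---the \Los-Tarski argument, via pseudoelementarity and closure under substructures, inside \Cref{univ}, and the delicate term-by-term translation $^-$ inside \Cref{lem:elementary}, which simulates the totalising element $\infty$ through its system of grounded subsets and copes with the empty-algebra case. The only points genuinely needing care here are the bookkeeping checks that the hypotheses of \Cref{lem:elementary} are matched in each of the three cases: that the relational view of each representation class is the intended one, and that each relation symbol has arity exactly one greater than the partial operation it encodes. Once these are confirmed, elementarity of all three classes follows at once.
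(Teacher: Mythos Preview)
Your proposal is correct and is essentially identical to the paper's own proof, which simply observes that \Cref{univ} supplies exactly the hypothesis needed to invoke \Cref{lem:elementary} for each of $\class J$, $\class K$ and $\class L$. The extra bookkeeping you spell out (matching arities, identifying the classes) is accurate and harmless.
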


\begin{proof}
The classes in question are $\class J$, $\class K$ and $\class L$.  \Cref{univ} tells us that each class satisfies the condition for \Cref{lem:elementary} to apply. Hence each class is axiomatisable in the appropriate relational language.
\end{proof}

We can now easily establish elementarity  in all cases without composition.

\begin{corollary}\label{sets}
Let $\sigma$ be any signature whose symbols are a subset of $\set{\cupdot, \minusdot, \emptyset}$.  The class of partial algebras that are $\sigma$-representable by sets is elementary.
\end{corollary}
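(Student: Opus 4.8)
The plan is to split on whether the constant $\emptyset$ occurs in $\sigma$ and on which of $\cupdot$, $\minusdot$ occur, reducing each of the eight signatures $\sigma\subseteq\{\cupdot,\minusdot,\emptyset\}$ to a case already settled. The three signatures $(\cupdot)$, $(\minusdot)$ and $(\cupdot,\minusdot)$ that omit $\emptyset$ are precisely the classes $\class J$, $\class K$ and $\class L$, whose elementarity is exactly the content of the theorem just proved, so these require nothing further.

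Next I would treat the two signatures $()$ and $(\emptyset)$ carrying no partial operation. Over the empty signature a partial algebra is a bare set, trivially representable by sending its elements to any distinct sets, so the representation class is the class of all such structures and is axiomatised by the empty theory. Over $(\emptyset)$ a partial algebra is a set with one distinguished element; mapping that element to $\emptyset$ and the others to distinct nonempty sets always yields a representation, so again every algebra is representable and the class is elementary. Both fall outside the scope of \Cref{add0} (which presupposes one of the partial operations is present) and so are checked by hand.

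For the remaining three signatures $(\cupdot,\emptyset)$, $(\minusdot,\emptyset)$ and $(\cupdot,\minusdot,\emptyset)$ I would invoke \Cref{add0}. Writing $\sigma$ for the signature with $\emptyset$ deleted, the remark tells us that a $\sigma_\emptyset$-algebra $\c A$ is representable exactly when its $\sigma$-reduct is $\sigma$-representable and $\c A$ satisfies the single equation $0\plusdot 0=0$ (the $\cupdot$ cases) or $0\negdot 0=0$ (the $\minusdot$ case); in the relational language these are just the atomic sentences $J(0,0,0)$ and $K(0,0,0)$. Since $\sigma$ is one of $(\cupdot)$, $(\minusdot)$, $(\cupdot,\minusdot)$, the theorem provides a relational $\lang L$-theory $T$, not mentioning the constant $0$, that axiomatises the $\sigma$-representation class. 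Because $T$ ignores $0$, a $\sigma_\emptyset$-structure satisfies $T$ if and only if its $\sigma$-reduct does; hence $T$ together with the appropriate atomic sentence axiomatises the $\sigma_\emptyset$-representation class, which is therefore elementary.

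The whole argument is bookkeeping once the theorem and \Cref{add0} are in hand, so there is no single hard step. The only points needing attention are the two operation-free signatures $()$ and $(\emptyset)$, which lie outside \Cref{add0} and must be dispatched directly, and the observation that the conditions pinning down $0$ as the empty set are expressible as atomic first-order formulas, so that conjoining them with $T$ keeps the axiomatisation first-order.
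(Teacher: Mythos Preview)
Your proposal is correct and follows essentially the same approach as the paper: the three signatures without $\emptyset$ are handled by the preceding theorem, the three with $\emptyset$ are reduced to those via \Cref{add0} by adjoining the single atomic axiom $J(0,0,0)$ or $K(0,0,0)$, and the two operation-free signatures are axiomatised by the empty theory. Your write-up is more explicit about why the reduct argument works and why the trivial cases are trivial, but the structure of the argument is identical.
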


\begin{proof}
The previous theorem gives us the result for the three signatures $(\cupdot)$, $(\minusdot)$ and $(\cupdot, \minusdot)$. Then as we noted in \Cref{add0}, axiomatisations for these signatures yield axiomatisations for the signatures $(\cupdot, \emptyset)$, $(\minusdot, \emptyset)$ and $(\cupdot, \minusdot, \emptyset)$ with the addition of a single extra axiom, either $J(0,0,0)$ or $K(0,0,0)$. The remaining cases, the empty signature and the signature $(\emptyset)$, trivially are axiomatised by the empty theory.
\end{proof}

\begin{corollary}
Let $\sigma$ be any signature whose symbols are a subset of $\set{\dcupdot, \minusdot, \emptyset}$.  The class of partial algebras that are $\sigma$-representable by partial functions is elementary.
\end{corollary}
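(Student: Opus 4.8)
The plan is to reduce this statement directly to the set case settled in \Cref{sets}, using the translation supplied by \Cref{prop}\eqref{first}. The key point is that an abstract partial algebra over a separation-logic signature does not itself distinguish between a set interpretation and a partial-function interpretation of its operations: the abstract symbols $\plusdot$, $\negdot$ and $0$ are interpreted by $\cupdot$ (respectively $\dcupdot$), $\minusdot$ and $\emptyset$, but in both the set and partial-function readings they are encoded by the \emph{same} relational language, namely the one with a ternary $J$ for $\plusdot$ and a ternary $K$ for $\negdot$. Thus, as a class of relational structures, a representation class is pinned down by the abstract signature together with the choice of whether we represent by sets or by partial functions.

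First I would fix a signature $\sigma'$ with symbols a subset of $\set{\dcupdot, \minusdot, \emptyset}$ and let $\sigma$ be the signature obtained by replacing $\dcupdot$ (if present) by $\cupdot$, so that $\sigma$ has symbols a subset of $\set{\cupdot, \minusdot, \emptyset}$ and $\sigma'$ is exactly the signature associated to $\sigma$ in the statement of \Cref{prop}\eqref{first}. That proposition asserts that a partial algebra is $\sigma$-representable by sets if and only if it is $\sigma'$-representable by partial functions. Since the two representation notions are expressed over the identical relational language, this biconditional says precisely that the class of partial algebras $\sigma'$-representable by partial functions and the class of those $\sigma$-representable by sets are the \emph{same} class of relational structures.

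It then remains only to invoke \Cref{sets}, which already gives that the class of partial algebras $\sigma$-representable by sets is elementary. As this is literally the same class of relational structures as the one in the statement, we conclude that the class of partial algebras $\sigma'$-representable by partial functions is elementary as well.

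I do not anticipate any genuine obstacle here; the substantive content has been front-loaded into \Cref{prop}\eqref{first} and \Cref{sets}. The only point requiring care is the bookkeeping that the set- and function-representation classes coincide \emph{on the nose} as relational structures---that is, that replacing $\cupdot$ by $\dcupdot$ alters only the intended interpretation and neither the underlying abstract partial algebra nor its first-order language---so that elementarity transfers with no further model-theoretic argument.
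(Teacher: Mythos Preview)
Your proposal is correct and follows exactly the same approach as the paper: invoke \Cref{prop}\eqref{first} to identify the partial-function representation class with the corresponding set representation class, then cite \Cref{sets}. The paper's own proof is a single sentence to this effect; your additional remarks about the shared relational language are accurate but not strictly necessary.
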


\begin{proof}
By  \Cref{prop}\eqref{first} these representation classes are the same as those in \Cref{sets}.
\end{proof}

\section{A Recursive Axiomatisation via Games}\label{Recursive}

In this section we describe a recursive axiomatisation of the class of $\cupdot$-repre\-sentable partial algebras. This axiomatisation can be understood quite simply, as a sequence of statements about a particular two-player game. The efficacy of this approach using games relies on our prior knowledge, obtained in the previous section, that the class in question is elementary. The reader should note that everything in this section can be adapted quite easily to $\minusdot$-representability by sets and $(\cupdot, \minusdot)$-representability by sets.

Fix some partial $\plusdot$-algebra $\c A$. The following definition and lemma are the motivation behind our two-player game.

\begin{definition}
We call a subset $U$ of $\c A$
\begin{itemize}
\item
\defn{$\plusdot$-prime} if  $a\plusdot b\in U$ implies either $a\in U$ or $b\in U$,
\item
\defn{bi-closed} if the two conditions $a\in U$ or $b\in U$ and  $a\plusdot b$  defined,  together imply $a\plusdot b\in U$,
\item
\defn{pairwise incombinable} if  $a, b\in U$ implies $a\plusdot b$ is undefined.
\end{itemize} 

\end{definition}
\begin{lemma}\label{lem:sets}
Let $\mathcal F(\c A)$ be the set of all $\plusdot$-prime, bi-closed, pairwise-incombinable subsets of $\c A$. Then $\c A$ has a disjoint-union representation if and only if there is a $B\subseteq \mathcal F(\c A)$ such that
\begin{enumerate}[label=(\roman*)]
\item\label{neq} for all $a\neq b\in\c A$ there is $U\in B$ such that either $a\in U$ and $b\not\in U$ or $b\in U$ and $a\not\in U$,
\item\label{defined} for all $a, b\in\c A$ if $a\plusdot b$ is undefined then there is $U\in B$ such that $a, b\in U$.
\end{enumerate}
\end{lemma}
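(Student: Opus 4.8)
The plan is to exploit a duality between the points of a representation's base and the subsets of $\c A$ collected in $\mathcal F(\c A)$: a point $x$ in the base determines the set of elements whose image contains $x$, and the three closure conditions defining $\plusdot$-prime, bi-closed, pairwise-incombinable subsets are exactly what such ``point-sets'' satisfy. I would prove the two directions separately, and the forward direction is what motivates the definitions.

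For the forward direction, suppose $\theta$ is a disjoint-union representation of $\c A$ over a base $X$. For each $x\in X$ I set $U_x=\set{a\in\c A\mid x\in a^\theta}$ and take $B=\set{U_x\mid x\in X}$. A short check shows each $U_x$ lies in $\mathcal F(\c A)$: $\plusdot$-primeness and bi-closedness both follow from the equation $(a\plusdot b)^\theta=a^\theta\cup b^\theta$ (a disjoint union) in force whenever $a\plusdot b$ is defined, while pairwise incombinability follows because $x\in a^\theta\cap b^\theta$ makes $a^\theta\cupdot b^\theta$, and hence $a\plusdot b$, undefined. Condition \ref{neq} then comes from injectivity of $\theta$, since distinct elements have distinct images and so are separated at some point; condition \ref{defined} comes from the fact that $a\plusdot b$ undefined forces $a^\theta\cap b^\theta\neq\emptyset$, supplying a point in both.

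For the converse, given such a $B$ I would build a representation directly, taking $B$ itself as the base and defining $a^\theta=\set{U\in B\mid a\in U}$. The verification splits into three matching checks. First, $a\plusdot b$ is defined if and only if $a^\theta\cupdot b^\theta$ is: pairwise incombinability gives ``defined $\Rightarrow$ disjoint images'', and condition \ref{defined} gives the contrapositive, namely that undefinedness produces a common member $U$ witnessing $a^\theta\cap b^\theta\neq\emptyset$. Second, when $a\plusdot b$ is defined, the equation $(a\plusdot b)^\theta=a^\theta\cup b^\theta$ holds, with the inclusion ``$\subseteq$'' supplied by $\plusdot$-primeness and ``$\supseteq$'' by bi-closedness. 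Third, injectivity of $\theta$ is precisely condition \ref{neq}. Together these show $\theta$ is a partial-algebra embedding whose image is closed under disjoint unions wherever they are defined, i.e.\ a disjoint-union partial algebra of sets, so $\theta$ is the desired representation.

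I do not expect any single step to be a serious obstacle: the conditions defining $\mathcal F(\c A)$ are essentially reverse-engineered from the point-sets $U_x$, so the forward direction dictates the definitions and the converse is a direct construction. The only point demanding care is bookkeeping---ensuring each closure property and each of conditions \ref{neq} and \ref{defined} is invoked in exactly the right place, and in particular keeping the definedness correspondence separate from the union equation, since they rest on different combinations of the hypotheses.
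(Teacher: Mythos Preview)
Your proposal is correct and follows essentially the same approach as the paper: define $U_x=\set{a\in\c A\mid x\in a^\theta}$ for the forward direction and $a^\theta=\set{U\in B\mid a\in U}$ for the converse, with the same bookkeeping of which hypothesis supports which verification. There is nothing to add.
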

\begin{proof}
For the left-to-right implication, if  $\theta$ is  a disjoint-union representation of $\c A$ on a base set $X$ then for each $x\in X$ let $U(x)=\set{a\in \c A\mid x\in a^\theta}$ and let $B=\set{U(x)\mid x\in X}$.   It is easy to see that  $U(x)$ is a $\plusdot$-prime, bi-closed, pairwise-incombinable set, for all $x\in X$, and that $B$ includes all  elements required by \ref{neq} and \ref{defined} of this lemma.  

Conversely,  assuming that $B\subseteq \mathcal F(\c A)$ has the required elements we can define a representation $\theta$ of $\c A$ by $a^\theta=\set{U\in B\mid a\in U}$. Condition \ref{neq} ensures that $\theta$ is faithful, that is, distinguishes distinct elements of $\c A$. Condition \ref{defined} ensures $a^\theta$ and $b^\theta$ are disjoint only if $a \plusdot b$ is defined. The pairwise incombinable condition on each $U \in B$ ensures $a \plusdot b$ is defined only if $a^\theta$ and $b^\theta$ are disjoint. The $\plusdot$-prime and bi-closed conditions on elements of $B$ ensure that when $a \plusdot b$ is defined, $(a \plusdot b)^\theta = a^\theta \cup b^\theta$.
\end{proof}

 We define a two player game $\Gamma_n$ over $\c A$ with $n\leq\omega$ rounds, played by players $\forall$ and $\exists$.   A position $(Y, N)$ consists of two finite subsets $Y$ and $N$ of $\c A$.  It might help to think of $Y$ as a finite set of sets  such that some given point belongs to each of them and $N$ is a finite set of sets such that  the same point belongs to none of them.

  In the initial round (round $0$) $\forall$ either
\begin{enumerate}[label=(\roman*)]
\item\label{faithful}
picks $a\neq b\in \c A$, or
\item\label{intersect}
picks $a, b\in\c A$ such that $a\plusdot b$ is undefined.
\end{enumerate}
  In the former case $\exists$ responds with an initial position, either $(\set a,  \set b)$ or $(\set b,  \set a)$, at her choice.  In the latter case she must  respond with the initial position $(\set{a, b},\emptyset) $.

In all later rounds, if the position is $(Y, N)$  then $\forall$ either
\begin{enumerate}[label=(\alph*)]
\item\label{down}
picks $a, b\in\c A$ such that $a\plusdot b$ is defined and belongs to $Y$, or
\item\label{b}
picks $a\in Y$ and $b\in\c A$ such that $a\plusdot b$ is defined, or
\item\label{c}
picks $a\in \c A$ and $b\in Y$ such that $a\plusdot b$ is defined.
\end{enumerate}
   In case \ref{down} player $\exists$ responds with either $(Y\cup\set a, N)$ or $(Y\cup \set b, N)$, in cases \ref{b} and \ref{c} she must respond with the position $(Y\cup\set{a\plusdot b}, N)$.  Observe that $N$ never changes as the game proceeds, it is either a singleton or empty.

A position $(Y, N)$ is a win for $\forall$ if either
\begin{enumerate}
\item\label{one}
$Y\cap N\neq \emptyset$, or 
\item\label{two}
there are $a, b\in Y$ such that $a\plusdot b$ is defined.
\end{enumerate}
 Player  $\forall$ wins a play of $\Gamma_n$ if he wins in some round $0 \leq i<n$, else $\exists$ wins the play of the game.

The game $\Gamma_n(Y, N)$ is similar (where $Y, N$ are  finite subsets of $\c A$), but the initial round is omitted and play begins from the position $(Y, N)$.

\begin{lemma}\label{lem:game}If $\c A$ is representable then $\exists$ has a winning strategy for $\Gamma_\omega$.  If $\c A$ is countable and $\exists$ has a \ws\ for $\Gamma_\omega$ then $\c A$ has a representation on a base of size at most $2|\c A|^2$.
\end{lemma}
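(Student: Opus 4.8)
The plan is to prove the two implications separately, using \Cref{lem:sets} as the bridge between representations and families of $\plusdot$-prime, bi-closed, pairwise-incombinable sets. For the forward direction, suppose $\theta$ is a disjoint-union representation of $\c A$ over a base $X$. For each $x\in X$ put $U(x)=\set{a\in\c A\mid x\in a^\theta}$; as noted in the proof of \Cref{lem:sets}, each $U(x)$ lies in $\mathcal F(\c A)$. I would give $\exists$ the strategy of maintaining the invariant that the current position $(Y,N)$ is \emph{realised} by some point $x\in X$, meaning $Y\subseteq U(x)$ and $N\cap U(x)=\emptyset$. At round $0$: if $\forall$ plays $a\neq b$, faithfulness of $\theta$ supplies a point $x$ lying in exactly one of $a^\theta,b^\theta$, and $\exists$ answers with $(\set a,\set b)$ or $(\set b,\set a)$ accordingly; if $\forall$ plays $a,b$ with $a\plusdot b$ undefined then $a^\theta\cap b^\theta\neq\emptyset$ (since $\theta$ preserves definedness), so some $x$ realises $(\set{a,b},\emptyset)$. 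In later rounds the three closure properties of $U(x)$ are exactly what is needed: $\plusdot$-primeness handles a move of type \ref{down}, and bi-closedness handles moves of types \ref{b} and \ref{c}, in each case keeping the same $x$ as realiser. Finally, a realised position can never satisfy either winning condition for $\forall$: \ref{one} is blocked because no element can be both in $Y\subseteq U(x)$ and in $N$ (which is disjoint from $U(x)$), and \ref{two} is blocked by pairwise incombinability of $U(x)$. Hence $\exists$ never loses and wins $\Gamma_\omega$.

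For the converse, assume $\c A$ is countable and fix a winning strategy for $\exists$. The idea is to extract a family $B\subseteq\mathcal F(\c A)$ witnessing representability via \Cref{lem:sets}. Each member of $B$ will be the set $U=\bigcup_i Y_i$ accumulated along a single play of $\Gamma_\omega$ in which $\exists$ follows her winning strategy while $\forall$ follows a carefully chosen opening move followed by a \emph{fair} scheduling strategy. Because $\exists$ wins, no position $(Y_i,N)$ is ever a win for $\forall$; this immediately makes $U$ pairwise incombinable (any two of its elements lie in a common $Y_i$, which cannot trigger condition \ref{two}) and, when $N=\set b$, forces $b\notin U$ (else condition \ref{one} would fire). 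To separate a given pair $a\neq b$ I let $\forall$ open with the faithfulness move on $a,b$, and to witness an undefined product $a\plusdot b$ I let $\forall$ open with the intersection move on $a,b$; the resulting $U$ then respectively separates $a,b$ or contains both, which is what conditions \ref{neq} and \ref{defined} of \Cref{lem:sets} demand.

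The main obstacle is designing $\forall$'s scheduling so that the limit set $U$ is genuinely bi-closed and $\plusdot$-prime, not merely pairwise incombinable. Each of these properties is secured by forcing $\forall$ to play the relevant move once its trigger element has entered $Y$: a type-\ref{down} move on $a,b$ whenever $a\plusdot b\in Y_i$ guarantees $a\in U$ or $b\in U$ (primeness), and a type-\ref{b} or type-\ref{c} move on $a,b$ whenever $a\in Y_i$ or $b\in Y_i$ with $a\plusdot b$ defined guarantees $a\plusdot b\in U$ (bi-closedness). Since $\c A$ is countable there are only countably many such potential demands, so I would have $\forall$ dovetail through an enumeration in which every demand recurs infinitely often, playing each demand at the first subsequent round at which it is actionable (otherwise $\forall$ makes any available move, or the play simply halts with $U$ already closed). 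As $Y$ only grows, an actionable demand stays actionable, so every demand is eventually met and $U$ acquires all three closure properties; this dovetailing is precisely where countability is used. Running one such play for each pair of the two kinds above produces $B\subseteq\mathcal F(\c A)$ meeting conditions \ref{neq} and \ref{defined}; since there are at most $|\c A|^2$ pairs of each kind, $|B|\leq 2|\c A|^2$, and the representation $a^\theta=\set{U\in B\mid a\in U}$ furnished by \Cref{lem:sets} has base $B$ of the required size.
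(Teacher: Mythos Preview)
Your proof is correct and follows essentially the same approach as the paper. For the forward direction you work directly with the sets $U(x)$ rather than first invoking \Cref{lem:sets} to obtain the family $B$, but since the paper's $B$ in the proof of \Cref{lem:sets} is precisely $\{U(x)\mid x\in X\}$, this is only an inlining; for the converse you spell out the dovetailing schedule where the paper simply says ``$\forall$ plays all possible moves subsequently'', which is the same device made explicit.
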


\begin{proof}
First suppose $\c A$ has a representation, $\theta$ say.  By \Cref{lem:sets} there is a set $B$ of $\plusdot$-prime, bi-closed, pairwise-incombinable subsets of $\c A$ such that  \ref{neq} for all $a\neq b\in\c A$ there is $U\in B$ such that either $a\in U,\;b\not\in U$ or $b\in U,\;a\not\in U$ and \ref{defined} whenever $a\plusdot b$ is undefined there is $U\in B$ with $a, b\in U$.
We describe a winning strategy for $\exists$.  In response to any initial $\forall$-move she will select a suitable $U\in B$ and play an initial position $(Y, N)$ such that 
\begin{equation}\label{eq:YN} Y\subseteq U \mbox{ and } N\cap U=\emptyset.
\end{equation} 
and the remainder of her strategy will be to preserve this condition throughout the play.

In the initial round there are two possibilities.
\begin{enumerate}[label=(\roman*)]
\item
If $\forall$ plays $a\neq b\in\c A$ then there is a $U\in B$ with either $a\in U,\; b\not\in U$ or $b\in U,\; a\not\in U$. In the former case $\exists$ plays an initial position $(\set a, \set b)$ and in the latter case she plays $(\set b, \set a)$. 
\item
If $\forall$ plays $(a, b)$ where $a\plusdot b$ is undefined,  there is $U\in B$ where $a, b\in U$ and $\exists$ selects such a $U$ and plays $(\set{a, b}, \emptyset)$.  
\end{enumerate}
In each case, \eqref{eq:YN} holds.  

In a subsequent round, if the current position $(Y, N)$ satisfies \eqref{eq:YN} and $\forall$ plays $a, b$ where $a\plusdot b\in Y$ is defined then since $U$ is $\plusdot$-prime either $Y\cup\set a\subseteq U$ or $Y\cup\set b\subseteq U$, so $\exists$ may play either $(Y\cup\set a, N)$ or $(Y\cup\set b, N)$, as appropriate, preserving \eqref{eq:YN}.  Similarly, if $\forall$ plays $a, b$ where $a\in Y$ and $a\plusdot b$ is defined (or $b\in Y$ and $a\plusdot b$ is defined), then since $U$ is bi-closed we have $a\plusdot b\in U$ so $\exists$ plays $(Y\cup\set{a\plusdot b}, N)$, preserving condition \eqref{eq:YN}.    This condition suffices to prove that $\exists$ does not lose in any round of the play.
  
  Conversely, suppose $\c A$ is countable and $\exists$ has a \ws\  for $\Gamma_\omega$. Then for each $a\neq b\in \c A$ let $S_{a, b}=\bigcup_{i<\omega} Y_i$, where $(Y_0, N), (Y_1, N), \ldots$ is a play of $\Gamma_\omega$ in which  $\forall$ plays  the type \ref{faithful} move  $(a, b)$ initially (so $N$ is a singleton). For each $a, b\in\c A$ where $a\plusdot b$ is undefined let $T_{a, b} = \bigcup_{i<\omega}Y_i$ be the limit of a play in which $\forall$ plays the type \ref{intersect} move $(a, b)$ initially (so $N$ is empty). In each case we suppose---here is where we use the hypothesis that $\c A$ is countable---that $\forall$ plays all possible moves subsequently. We also suppose that $\exists$ uses her \ws.

Each set $S_{a, b}$ (where $a\neq b$) or $T_{a, b}$ (where $a\plusdot b$ is undefined) is $\plusdot$-prime, bi-closed and pairwise incombinable, since $\forall$ plays all possible moves in a play and $\exists$ never loses.  Hence $B=\set{S_{a, b}\mid a\neq b\in\c A}\cup\set{T_{a, b}\mid a\plusdot b\mbox{ is undefined}}$ satisfies the conditions of \Cref{lem:sets}. Clearly the size of the base set $B$ is at most $2|\c A|^2$.
\end{proof}

\begin{lemma}\label{lem:fmla}
  For each $n<\omega$ there is a first-order $\lang L(J)$-formula $\rho_n$ such that $\c A\models\rho_n$ if and only if $\exists$ has a \ws\ in $\Gamma_n$.
\end{lemma}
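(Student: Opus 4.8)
The plan is to exploit the fact that, for finite $n$, the game $\Gamma_n$ has bounded depth and, crucially, its positions stay bounded in \emph{size}, so that the assertion ``$\exists$ has a \ws'' unravels into a single first-order sentence with $n$ nested quantifier blocks. The key observation is this: the component $N$ of a position is fixed once and for all in the initial round and is a singleton or empty, while $Y$ begins with at most two elements and gains exactly one element in each later round. Hence every position $(Y,N)$ reachable in a play of $\Gamma_n$ satisfies $|Y|\le n+1$ and $|N|\le 1$, and so may be coded by a tuple $(\bar y;\bar z)$ of at most $n+2$ elements of $\c A$. It is precisely this boundedness that separates the finite games from $\Gamma_\omega$ and makes a single $\lang L(J)$-formula available.

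First I would define, by downward induction on the number $m$ of remaining later rounds, formulas $S_m(\bar y;\bar z)$ asserting that from the position coded by $(\bar y;\bar z)$ player $\exists$ can survive $m$ further rounds without $\forall$ ever occupying a winning position. Write $\mathrm{Loss}(\bar y;\bar z)$ for the formula coding ``$(Y,N)$ is a win for $\forall$'', namely the disjunction of $\bigvee_i y_i=z$ (the condition $Y\cap N\neq\emptyset$, omitted when $N$ is empty) with $\bigvee_{i,j}\exists w\,J(y_i,y_j,w)$ (some $y_i\plusdot y_j$ being defined). I set $S_0(\bar y;\bar z):=\neg\mathrm{Loss}(\bar y;\bar z)$, and for $m\ge 1$ I let $S_m(\bar y;\bar z)$ be $\neg\mathrm{Loss}(\bar y;\bar z)$ conjoined with a clause for each of the three later-round move types. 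The clause for (a) is $\forall a\,\forall b\,(\bigvee_i J(a,b,y_i)\rightarrow(S_{m-1}(\bar y,a;\bar z)\vee S_{m-1}(\bar y,b;\bar z)))$, the disjunction reflecting $\exists$'s binary choice; the clauses for (b) and (c) are the conjunctions over $i$ (the choice of element $y_i\in Y$) of $\forall b\,\forall w\,(J(y_i,b,w)\rightarrow S_{m-1}(\bar y,w;\bar z))$ and $\forall a\,\forall w\,(J(a,y_i,w)\rightarrow S_{m-1}(\bar y,w;\bar z))$ respectively, the new element $w$ being forced.

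The point that makes every clause legitimate is that each game condition---``$a\plusdot b$ is defined'', ``$a\plusdot b\in Y$'', ``$a\in Y$'', and the identity of the forced new element---is expressible in $\lang L(J)$ using $J$-atoms, equalities with the parameters $y_i$, and the single bounded quantifier $\exists w\,J(\cdot,\cdot,w)$; moreover single-valuedness \eqref{eq:singlev} guarantees that $a\plusdot b$, when defined, is the unique such $w$, so the forced responses really are named unambiguously. I then handle the initial round by setting, for $n\ge 1$,
\[\rho_n\;:=\;\forall a\,\forall b\,\big(a\neq b\rightarrow(S_{n-1}(a;b)\vee S_{n-1}(b;a))\big)\;\wedge\;\forall a\,\forall b\,\big(\neg\exists w\,J(a,b,w)\rightarrow S_{n-1}(a,b;\,)\big),\]
which mirrors $\exists$'s possible replies to $\forall$'s two opening move types (i) and (ii), together with $\rho_0:=\top$. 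Correctness follows from a routine induction on $m$ showing that $\c A\models S_m(\bar y;\bar z)$ under the assignment realising $(Y,N)$ if and only if, starting from position $(Y,N)$, player $\exists$ can prevent $\forall$ from winning in the present position or in any of the next $m$ rounds; here one uses that $\exists$'s legal responses at any position form an at most binary set, so the game-tree alternation ``for every $\forall$-move there is an $\exists$-response'' is faithfully rendered by ``$\forall a\,\forall b\,(\ldots\rightarrow\bigvee\ldots)$''. The round-$0$ analysis then yields $\c A\models\rho_n$ iff $\exists$ wins $\Gamma_n$.

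The only real work is bookkeeping. The substance of the argument is the boundedness observation of the first paragraph; granting that, the main thing to check is that the recursion terminates using only finitely many variables and that every rule and the winning condition of $\Gamma_n$ are translated without distortion into $\lang L(J)$---in particular the rendering of definedness and of the forced moves via \eqref{eq:singlev}. I expect the most error-prone step to be getting the indices and the treatment of the fixed set $N$ exactly right across the induction, rather than anything conceptually difficult.
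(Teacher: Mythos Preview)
Your proposal is correct and follows essentially the same approach as the paper: recursively define formulas $S_m$ (the paper calls them $\mu_m$) expressing that $\exists$ survives $m$ later rounds from a coded position, then wrap them in a formula $\rho_n$ handling the initial round. The only cosmetic differences are that the paper indexes positions by finite sets $V,W$ of variables rather than tuples, and it places the $\neg\mathrm{Loss}$ check only in the base case $\mu_0$ rather than repeating it at every level as you do; neither difference affects correctness.
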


\begin{proof}
Let $V$ and $W$ be disjoint finite sets of variables.  For each $n<\omega$ we define formulas $\mu_n(V, W)$ in such a way that for any partial  $\plusdot$-algebra $\c A$ and any variable assignment $\lambda:\mathit{vars}\rightarrow\c A$ we have 
\begin{equation}\label{eq:lambda}
\c A, \lambda\models \mu_n(V, W) \iff \exists \mbox{ has a \ws\ in }\Gamma_n(\lambda[V], \lambda[W])\mbox{.}
\end{equation}

Let 
\begin{align*}
\mu_0(V, W)&=\bigwedge_{\mathclap{v, v'\in V}}\;\neg\exists c J(v, v', c)\;\wedge\; \bigwedge_{\mathclap{v\in V,\; w\in W}}\;v\neq w
\end{align*}
where $c$ is a fresh variable. So \eqref{eq:lambda} is clear when $n=0$.
For the recursive step let
\begin{align*}
\mu_{n+1}(V, W)=\forall a, b&\; \bigg(\bigwedge_{v\in V} (J(a, b, v)\rightarrow \mu_n(V\cup\set a, W)\vee\mu_n(V\cup\set b, W))\\
& \;\wedge \bigwedge_{v\in V} ( J(a, v, b)\rightarrow\mu_n(V\cup\set{b}, W))\\
& \; \wedge\bigwedge_{v\in V} ( J(v, a, b)\rightarrow\mu_n(V\cup\set{b}, W))\bigg)
\end{align*}
where $a$ and $b$ are fresh variables. By a simple induction on $n$ we see that \eqref{eq:lambda} holds, for all $n$.
Finally, (let $\rho_0 = \top$ and) let
\[\rho_{n+1}=\forall a, b\bigg( (a= b\vee \mu_{n}(\set a, \set b)\vee\mu_{n}(\set b,\set a))\wedge (\exists c J(a, b, c)\vee\mu_{n}(\set{a, b}, \emptyset))\bigg)\]
where again $a, b$ and $c$ are fresh variables.
\end{proof}

Observe that each formula $\mu_n(V, W)$ is equivalent to a universal formula and therefore $\rho_n$, but for the clause $\exists c J(a, b, c)$,  is universal.

\begin{theorem}
The isomorphic closure of the class of disjoint-union partial algebras of sets is axiomatised by $\set{\rho_n\mid n<\omega}$.
\end{theorem}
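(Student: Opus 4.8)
The plan is to prove the two inclusions $\class J\subseteq \mathrm{Mod}(\set{\rho_n\mid n<\omega})$ and $\mathrm{Mod}(\set{\rho_n\mid n<\omega})\subseteq\class J$ by chaining together \Cref{lem:game}, which links representability to the $\omega$-round game, and \Cref{lem:fmla}, which links each $\rho_n$ to the finite game $\Gamma_n$. The forward inclusion is immediate: if $\c A\in\class J$ then by \Cref{lem:game} player $\exists$ has a winning strategy in $\Gamma_\omega$, and a strategy that never loses in any round a fortiori wins every finite truncation $\Gamma_n$; so by \Cref{lem:fmla} we get $\c A\models\rho_n$ for every $n$.

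For the converse, suppose $\c A\models\rho_n$ for all $n$. By \Cref{lem:fmla} this says exactly that $\exists$ wins $\Gamma_n$ for every finite $n$, and to invoke \Cref{lem:game} I must upgrade this to a winning strategy in the single game $\Gamma_\omega$. This passage from ``wins every finite game'' to ``wins the infinite game'' is the main obstacle: it is a K\"onig's-lemma situation, but complicated by the fact that $\forall$ may pick arbitrary elements of $\c A$, so the game tree has infinite branching. The observation that rescues the argument is that $\exists$'s branching is finite---inspecting the rules, she has at most two legal responses to any move of $\forall$ (a binary choice in two of the cases, and a single forced response otherwise). I would therefore call a position $p$ \emph{good} if $\exists$ wins $\Gamma_n(p)$ for every $n$, note the monotonicity that winning $\Gamma_k(p)$ entails winning $\Gamma_j(p)$ for all $j\le k$, and argue that from any good position, for every $\forall$-move the finitely many $\exists$-responses cannot all fail to be good: for each $n$ some response survives $n-1$ further rounds, so by finiteness one fixed response does so for infinitely many $n$, hence for all $n$ by monotonicity, hence is good. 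Thus $\exists$ can preserve goodness indefinitely (the initial round being handled identically), which yields the required winning strategy in $\Gamma_\omega$.

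The final wrinkle is that the converse direction of \Cref{lem:game} requires $\c A$ to be countable, whereas a model of $\set{\rho_n\mid n<\omega}$ need not be. Here I would lean on the elementarity of $\class J$ established in \Cref{Axiomatisability}. Given arbitrary $\c A\models\set{\rho_n\mid n<\omega}$, the downward L\"owenheim--Skolem theorem supplies an at-most-countable $\c B\equiv\c A$ (taking $\c B=\c A$ if $\c A$ is finite). Since each $\rho_n$ is first order, $\c B\models\set{\rho_n\mid n<\omega}$, so by the argument above $\exists$ wins $\Gamma_\omega$ over $\c B$ and, $\c B$ being countable, \Cref{lem:game} gives that $\c B$ is representable, that is $\c B\in\class J$. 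As $\class J$ is closed under elementary equivalence and $\c A\equiv\c B$, we conclude $\c A\in\class J$, establishing the second inclusion and hence the theorem.
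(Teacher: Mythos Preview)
Your argument follows the same route as the paper's: \Cref{lem:game} plus \Cref{lem:fmla}, a K\"onig-type step exploiting that $\exists$ has at most two responses per round (you spell this out via ``good'' positions; the paper simply invokes K\"onig's tree lemma), and then downward L\"owenheim--Skolem together with the elementarity of $\class J$ from \Cref{Axiomatisability} to handle uncountable models.

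There is, however, one point you pass over that the paper treats explicitly. Both \Cref{lem:game} and \Cref{lem:fmla} are stated only for partial $\plusdot$-algebras, that is, for $\lang L(J)$-structures satisfying~\eqref{eq:singlev}. In the converse direction you begin with an arbitrary $\lang L(J)$-structure $\c A$ (and its countable companion $\c B$) satisfying all $\rho_n$, but you never verify that this forces~\eqref{eq:singlev}; without that check you are not entitled to invoke either lemma on $\c B$. The paper fills this by arguing directly that $\rho_3$ already implies single-valuedness of $J$: assuming $J(a,b,c)\wedge J(a,b,d)$ with $c\neq d$, one unfolds $\mu_2(\set c,\set d)$ (or the symmetric case) through two more rounds to reach $\mu_0(\set{c,a,d},\set d)$, which fails at the conjunct $v\neq w$ when both are assigned $d$. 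This is an easy fix, but as written your converse has a genuine gap at the point where you first appeal to \Cref{lem:fmla}.
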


\begin{proof}
We will use \Cref{lem:game} and \Cref{lem:fmla}, but we must be slightly careful, because we chose to present the lemmas with the assumption that the $\lang L(J)$-structure in question is a partial algebra. Hence we must check that \eqref{eq:singlev} holds before appealing to either lemma.

If an $\lang L(J)$-structure $\c A$ is isomorphic to a disjoint-union partial algebra of sets then certainly it satisfies \eqref{eq:singlev}. Then by \Cref{lem:game}, player $\exists$ has a winning strategy in the game of length $n$ for each $n<\omega$. So $\c A \models\rho_n$ by \Cref{lem:fmla}.

Conversely, if $\c A \models\set{\rho_n\mid n<\omega}$ let $\c B$ be any countable elementary substructure of $\c A$. Then $\c B \models\set{\rho_n\mid n<\omega}$. The validity of $\rho_3$ tells us that $\eqref{eq:singlev}$ holds, as we now explain. For if $J(a,b,c)$ and $J(a,b,d)$, with $c \neq d$, then from $\rho_3$ we know that either $ \mu_2(\set c, \set d)$ holds or $\mu_2(\set d, \set c)$ holds. Without loss of generality, we assume the former.   From $\mu_2(\set c, \set d)$, assigning $c$ to $v$ in the first conjunct, we deduce $\mu_1(\set{c, a}, \set d)$ or $\mu_1(\set{c, b}, \set d)$ and again we may assume the former.  From the second conjunct in $\mu_1(\set{c, a}, \set d)$ (assigning $b$ to the variable $v$ and $d$ to the variable $b$) we deduce $\mu_0(\set{c, a, d}, \set d)$, which is contradicted by the final inequality $v\neq w$, when $v$ and $w$ are both assigned $d$.

  Hence we can use \Cref{lem:fmla} and conclude that $\exists$ has a \ws\ in game $\Gamma_n$ for each $n<\omega$. Then since $\exists$ has only finitely many choices open to her in each round (actually, at most two choices), by K\"onig's tree lemma she also has a \ws\ in $\Gamma_\omega$. So by \Cref{lem:game} the partial algebra $\c B$ is isomorphic to a disjoint-union partial algebra of sets.   Since $\c A$ is elementarily equivalent to $\c B$, we deduce   $\c A$ is also isomorphic to a disjoint-union partial algebra of sets, by \Cref{lem:elementary}.
\end{proof}

\section{Non-axiomatisability}\label{Non-axiomatisability}

In this section we show that for any of the signatures $(\cupdot), (\minusdot), (\cupdot, \minusdot), (\cupdot, \emptyset), {(\minusdot, \emptyset)}$ or $(\cupdot, \minusdot, \emptyset)$ the class of  partial algebras representable by sets is not finitely axiomatisable. Hence the same is true for representability by partial functions, when $\cupdot$ is replaced by $\dcupdot$. For partial functions, we also show the same holds when we add composition to these signatures. Our strategy is to describe a set of non-representable partial algebras that has a representable ultraproduct.

Let $m$ and $n$ be sets of cardinality greater than two.   We will call a subset of $m\times n$ \defn{axial} if it has the form $\set i\times J$ (for some $i \in m,\;  J\subseteq n$) or the form $I\times\set j$ (for some $I\subseteq m,\; j \in n$).       Observe that $\emptyset\times \set  j=\set i\times  \emptyset=\emptyset$ for any $i\in m,\; j\in n$.  

Next we define a partial  $(\plusdot, 0)$-algebra $\c X(m, n)$. It has a domain consisting of all axial subsets of $m\times n$. The constant $0$ is interpreted as the empty set and $S\plusdot T$  is defined and equal to $S\cup T$ if $S$ is disjoint from $T$ and $S\cup T$ is axial, else it is undefined.    

Now for any $i\neq i'\in m$ and $j\neq j'\in n$ the set $I(i, j, i', j')=\set{a\in\c X(m, n)\mid (i, j)\in a\mbox{ or }(i', j')\in a}$ is a $\plusdot$-prime, bi-closed, pairwise-incombinable set. The collection $\set{I(i, j, i', j')\mid i\neq i'\in m,\; j\neq j'\in n}$ of such sets satisfies conditions \ref{neq} and \ref{defined} of \Cref{lem:sets} and hence the $\plusdot$-reduct of $\c X(m, n)$ is $\cupdot$-representable. The formula $0 \plusdot 0 = 0$ is also satisfied, so $\c X(m, n)$ is $(\cupdot, \emptyset)$-representable.

\begin{definition}
Given a partial algebra $\c A=(A, (\Omega_i)_{i < \beta})$, a \defn{partial-algebra congruence} on $\c A$ is an equivalence relation $\sim$ with the property that for each $i$ and every $a_1, \ldots, a_{\alpha(i)}, b_1, \ldots, b_{\alpha(i)} \in \c A$, if $a_1\sim b_1$, \ldots, $a_{\alpha(i)}\sim b_{\alpha(i)}$ then $\Omega_i(a_1, \ldots, a_{\alpha(i)})$ is defined if and only if $\Omega_i(b_1, \ldots, b_{\alpha(i)})$ is defined and when these are defined $\Omega_i(a_1, \ldots, a_{\alpha(i)}) \sim \Omega_i(b_1, \ldots, b_{\alpha(i)})$.
\end{definition}

Note our condition for being a partial-algebra congruence is strictly stronger than that obtained by viewing a partial algebra as a relational structure and then using the familiar definition of a congruence---for signatures with no function symbols a congruence relation is merely an equivalence relation. Our definition of a partial-algebra congruence takes the `algebraic' rather than `relational' view of the structure.
 If a congruence $\sim$ on a partial algebra $\c A$ is a partial-algebra congruence, that is sufficient for the quotient $\c A /{\sim}$ (with its usual meaning on the relational view of $\c A$) to itself be a partial algebra.

\begin{definition}
Given a partial algebra $\c A=(A, (\Omega_i)_{i < \beta})$ and a partial-algebra congruence $\sim$ on $\c A$, the \defn{partial-algebra quotient} of $\c A$ by $\sim$, written $\c A/{\sim}$, is the partial algebra of the signature $(\Omega_i)_{i < \beta}$ with domain the set of $\sim$-equivalence classes and well-defined partial operations given by: $\Omega_i([a_1], \ldots, [a_{\alpha(i)}])$= $[\Omega_i(a_1, \ldots, a_{\alpha(i)})]$ if $\Omega_i(a_1, \ldots, a_{\alpha(i)})$ is defined, else $\Omega_i([a_1], \ldots, [a_{\alpha(i)}])$ is undefined.
\end{definition}

Given  a partial-algebra congruence $\sim$ on $\c A$, the partial-algebra quotient by $\sim$ is the same structure  as the quotient by $\sim$, which is why we reuse the notation $\c A/{\sim}$. All the expected relationships between partial-algebra homomorphisms, partial-algebra congruences and partial-algebra quotients hold.

Returning to our task, we define a binary relation $\sim$ over $\c X(m, n)$ as the smallest equivalence relation such that
\begin{align*}
\set i\times n&\sim m \times \set j\\
\set i \times (n \setminus\{j\})&\sim (m \setminus \{i\}) \times \set j
\end{align*}
for all $i \in m,\; j \in n$.  The equivalence class of $\set i\times n$ (for any choice of $i \in m$) is denoted $1$
and the equivalence class of $\set i \times (n \setminus \{j\})$ is denoted $\overline{(i, j)}$, for each $i \in m,\; j \in n$. All other equivalence classes are singletons, either $\set{\set i\times J}$ for some $i \in m,\; J\subsetneq n$  or  $\set{I \times \set j}$ for some $ I\subsetneq m, \;j\in n$.   We show next that $\sim$ is a partial-algebra congruence. Clearly $\plusdot$ is commutative in the sense that $S \plusdot T$ is defined if and only if $T \plusdot S$ is defined and then they are equal. Hence it suffices to show, for any $S\sim S'$, that $S\cap T=\emptyset$ and $S\cup T$ is axial if and only if $S'\cap T=\emptyset$ and $S'\cup T$ is axial, and if these statements are true then $S\cup T\sim S'\cup T$. Further, by symmetry, it suffices to prove only one direction of this biconditional.

Suppose then that $S\sim S'$, that $S\cap T=\emptyset$ and that $S\cup T$ is axial. 
 We may assume  $S \neq S'$, so without loss of generality there are two cases to consider: the case $S = \set i\times n$ and the case $S = \set i\times (n \setminus \{j\})$ and $S' = (m \setminus\{i\}) \times \set j$. In the first case, since $S \cup T$ is axial and $|n| >1$ we know $T$ must be a subset of $S$. But $T$ is also disjoint from $S$, hence $T$ is empty. Then it is clear that $S'\cap T=\emptyset$ and $S'\cup T$ is axial and that $S \cup T \sim S' \cup T$. In the second case, since $S \cup T$ is axial and $|n| >2$ we know $T$ must be a subset of $\set i\times n$. But $T$ is also disjoint from $S$ and so $T$ is either $\set{(i, j)}$ or $\emptyset$. Either way, it is clear that $S'\cap T=\emptyset$ and $S'\cup T$ is axial and that $S \cup T \sim S' \cup T$.

Now define a partial $(\plusdot, 0)$-algebra $\c A(m, n)$  as the partial-algebra quotient \linebreak$\c X(m, n)/{\sim}$.      Since the elements of $\c A(m, n)$ are $\sim$-equivalence classes  and these are typically singletons, we will suppress the $[\,\cdot\,]$ notation and let the axial set $S$ denote the equivalence class of $S$, taking care to identify $\sim$-equivalent axial sets.

For the following lemma, recall the notion of $\lesssim$-complete representability given in \Cref{completely}.

\begin{lemma}\label{lem:mn}
For any sets $m$ and $n$ of cardinality greater than two, the partial algebra $\c A(m, n)$ is $\lesssim$-completely $(\cupdot, 0)$-representable if and only if $|m|=|n|$.
\end{lemma}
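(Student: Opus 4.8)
The plan is to route both directions through the family of \emph{points} $p_{ij} := \set{(i,j)}$, regarded as elements of $\c A(m, n)$ (each is a singleton $\sim$-class, since $|n \setminus \set{j}| \geq 2$), together with the two dual ways in which the top element $1$ arises as a supremum of points. First I would record these suprema. For fixed $i \in m$ the family $\set{p_{ij} \mid j \in n}$ is pairwise combinable—any two points of a common row have disjoint, axial union—and its union is the full row $\set i \times n$, whose class is $1$; checking that no element strictly below the top bounds every $p_{ij}$ shows $1$ is indeed its supremum. Dually, for fixed $j \in n$ the family $\set{p_{ij} \mid i \in m}$ also has supremum $1$. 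Note every element of $\c A(m,n)$ is $\lesssim 1$, so $1$ is the top element.

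For the implication "$\lesssim$-completely representable $\Rightarrow |m| = |n|$", let $\theta$ be a $\lesssim$-complete $(\cupdot, 0)$-representation on a base $X$ and write $P_{ij} = p_{ij}^\theta$. Since $\theta$ is faithful and $1 \neq 0$, the set $1^\theta$ is nonempty; as every image lies inside $1^\theta$ we may restrict the base and assume $X = 1^\theta$. Applying $\lesssim$-completeness (\Cref{completely}) to the two suprema, for each fixed $i$ the sets $\set{P_{ij} \mid j \in n}$ partition $X$, and for each fixed $j$ the sets $\set{P_{ij} \mid i \in m}$ partition $X$ (pairwise combinability forces the images to be pairwise disjoint and completeness forces their union to be $1^\theta = X$). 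Fix any $x \in X$. By the row partitions each $i$ has a unique $j$ with $x \in P_{ij}$, call it $\phi_x(i)$; by the column partitions each $j$ has a unique $i$ with $x \in P_{ij}$, call it $\psi_x(j)$. Then $\phi_x(i) = j \iff x \in P_{ij} \iff \psi_x(j) = i$, so $\phi_x : m \to n$ and $\psi_x : n \to m$ are total and mutually inverse on their graphs, hence each is injective. Total injections both ways yield $|m| = |n|$ by Cantor--Schr\"oder--Bernstein.

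For the converse I would exhibit a representation directly. When $|m| = |n|$ take the base $X$ to be the nonempty set of all bijections $\beta : m \to n$, set $P_{ij} = \set{\beta \in X \mid \beta(i) = j}$, and represent each axial set $S$ by $S^\theta = \bigcup_{(i,j) \in S} P_{ij} = \set{\beta \in X \mid \beta(i) = j \text{ for some } (i,j) \in S}$. Because every $\beta$ is a bijection, a full row and a full column both map to $X$, while $\set i \times (n \setminus \set j)$ and $(m \setminus \set i) \times \set j$ both map to $X \setminus P_{ij}$; thus $\theta$ is constant on the $\sim$-classes, descends to $\c A(m, n)$, and sends $0 \mapsto \emptyset$, $1 \mapsto X$, $\overline{(i, j)} \mapsto X \setminus P_{ij}$. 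That $\theta$ commutes with unions is immediate from its definition, and $\lesssim$-completeness then follows because a pairwise-combinable family is forced to lie within a single row or column, so its supremum is realised as the union of the corresponding axial sets, on which $\theta$ already distributes over union. Faithfulness reduces to the observation that distinct $\sim$-classes are separated by the point-sets $P_{ij}$, using that each coordinate value $\beta(i)$ ranges over all of $n$ as $\beta$ varies.

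The technical heart of the converse, and the step I expect to be the main obstacle, is the defining biconditional of a representation: that $S \plusdot T$ is defined in $\c X(m, n)$ (equivalently $S \cap T = \emptyset$ and $S \cup T$ is axial) if and only if $S^\theta \cap T^\theta = \emptyset$, that is, no single bijection hits both $S$ and $T$. The forward implication is easy, since two disjoint subsets of one line cannot both be met by the single value $\beta$ takes on that line. The reverse implication requires a case analysis—row/row, column/column, and row/column—showing that whenever $S \cap T \neq \emptyset$ or $S \cup T$ fails to be axial one can build a bijection meeting both; this is exactly where the hypotheses $|m|, |n| > 2$ and $|m| = |n|$ are needed, to guarantee enough room to extend a partial assignment of two distinct values to a full bijection. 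The same overlap analysis delivers the faithfulness claim used above.
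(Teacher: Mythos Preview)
Your proposal is correct and follows essentially the same route as the paper: in the forward direction you fix a point $x\in 1^\theta$ and read off a bijection $m\to n$ from the point-images $P_{ij}$ (the paper calls this relation $R$ and argues directly that it is a bijection, so your appeal to Cantor--Schr\"oder--Bernstein is superfluous---$\phi_x$ and $\psi_x$ are already mutual inverses); in the reverse direction your base of bijections $m\to n$ is exactly the paper's base $P_n$ of permutations after its reduction to $\c A(n,n)$, and the case analyses you flag for faithfulness, the domain-of-$\plusdot$ biconditional, and $\lesssim$-completeness are precisely those the paper carries out.
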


\begin{proof}
For the left-to-right implication let $\theta$ be a $\lesssim$-complete representation of $\c A(m, n)$ over the base $X$. The set $1^\theta$ must be nonempty, because $1 \plusdot 1$ is undefined. Fix some $x\in 1^\theta$ and define a subset  $R$ of $m\times n$ by letting $(i, j)\in R\iff x\in \set{(i, j)}^\theta$ for $i \in m,\; j \in n$.  
 For each $i \in m$, since $1$ is the supremum of $\set{\set{(i, j)}\mid  j \in n}$ and $\theta$ is $\lesssim$-complete, there is $j \in n$ such that $x\in \set{(i, j)}^\theta$ and hence $(i, j)\in R$.  Dually, for any $j \in n$, since $1$ is the supremum of $\set{\set{(i, j)}\mid i \in m}$ there is $i \in m$ such that $(i, j)\in R$.  We cannot have $(i, j), (i', j)\in R$, for distinct $i, i' \in m$ since $\theta$ is a representation and $\set{(i, j)}\plusdot\set{ (i', j)}$ is defined.  Similarly, for distinct $j, j' \in n$ we cannot have $(i, j), (i, j')\in R$.   Hence  $R$ is a bijection from $m$ onto $n$. We deduce that $|m|=|n|$.

For the right-to-left implication suppose $|m|=|n|$. It suffices to describe a $\lesssim$-complete representation of $\c A(n, n)$.

The base of the representation is the set $P_n$ of all permutations on $n$.  If $S$ is any axial set it has the form $\set i\times J$ for some $i \in n,\; J\subseteq n$ or the form $I \times \set j$ for some $I\subseteq n,\; j \in n$.  Define a representation $\theta$ over $P_n$ by letting $(\set i\times J)^\theta$ be the set of all permutations $\sigma\in P_n$ such that $\sigma(i)\in J$ and $(I \times \set j)^\theta$ be the set of all permutations $\sigma\in P_n$ such that $\sigma^{-1}(j)\in I$.   Observe this is well defined, since firstly if an axial set is both of the form $\set i\times J$ and of the form $I \times \set j$ then the definitions agree, and secondly it is easily seen that $\sim$-equivalent axial sets are assigned the same set of permutations. 

We now show that $\theta$ is a $(\cupdot, 0)$-representation. To see that $\theta$ is faithful we show that $\sim$-inequivalent axial sets are represented as distinct sets of permutations. We may assume the axial sets are not in the equivalence class 1, since $1^\theta = P_n$ and all axial sets not in 1 are clearly assigned proper subsets of $P_n$. Similarly, we may assume the axial sets are not the empty set.

First suppose we have two inequivalent \emph{vertical} sets $\set i\times J$ and $\set {i'}\times J'$. If $i = i'$ there must be a $j$ in the symmetric difference of $J$ and $J'$. Then any permutation with $i \mapsto j$ witnesses the distinction between $(\set i\times J)^\theta$ and $(\set {i'}\times J')^\theta$. Otherwise $i \neq i'$, and if we can choose  $j \neq j'$ with $j \in J$ and  $j' \not\in J'$ then any permutation with $i \mapsto j$ and $i' \mapsto j'$ belongs to $(\set i\times J)^\theta\setminus (\set {i'}\times J')^\theta$. Since we assumed our axial sets are neither $\emptyset$ nor in $1$ we can do this unless  $J$ and $n\setminus J'$ are the same singleton set, $\set {j_0}$ say. But then for any distinct $j, j'\in n\setminus\set {j_0}$ we have $j\not\in J$ and $j'\in J'$ so any permutation with $i\mapsto j,\; i'\mapsto j'$  belongs to $(\set {i'}\times J')^\theta\setminus(\set i\times J)^\theta$. Hence $\theta$ always distinguishes inequivalent vertical sets. If we have two inequivalent \emph{horizontal} sets $I \times \set j$ and $I' \times \set j'$ then the argument is similar. 

Lastly, suppose we have inequivalent sets $\set i\times J$ and $I \times \set j$. If we can choose a  $k \in J$ not equal to $j$ and an $l \not\in I$ not equal to $i$ then there exist permutations with $i \mapsto k$ and $l \mapsto j$ and any such permutation belongs to  $(\set i \times J)^\theta\setminus (I \times \set j)^\theta$. We can do this unless either $J = \{j\}$, in which case we have two horizontal sets, which we have already considered, or $I = n \setminus \{i\}$. By a symmetrical argument, we can witness the distinction unless $J = n \setminus \{j\}$. Hence $(\set i \times J)^\theta \neq (I \times \set j)^\theta$ unless $\set i\times J = \set i\times (n \setminus \{j\})$ and $I \times \set j = (n \setminus \{i\}) \times \set j$, contradicting the assumed inequivalence of $\set i\times J$ and $I \times \set j$. This completes the argument that $\theta$ is faithful.

It is clear that $\theta$ correctly represents $0$ as $\emptyset$. Now to see that $\theta$ is a $(\cupdot, \emptyset)$-representation it remains to show that $\theta$ represents $\plusdot$ correctly as $\cupdot$. If $S \plusdot T$ is defined then we may assume $S = \set i\times J_1$ and $T = \set i\times J_2$ for some disjoint $J_1$ and $J_2$, since the case where $S \plusdot T$ is a horizontal set is similar. Then it is clear from the definition of $\theta$ that $S^\theta$ and $T^\theta$ are disjoint and so $S^\theta \cupdot T^\theta$ is defined and that $(S \plusdot T)^\theta = S^\theta \cupdot T^\theta$. If $S \plusdot T$ is undefined then either there is some $(i, j) \in S \cap T$, in which case $S^\theta$ and $T^\theta$ clearly are non-disjoint, or $S \cup T$ is not axial, in which case there are $i \neq i'$ and $j \neq j'$ with $(i, j) \in S$ and $(i', j') \in T$. In the second case, any permutation with $i \mapsto j$ and $i' \mapsto j'$ witnesses that $S^\theta$ and $T^\theta$ are non-disjoint. Hence when $S \plusdot T$ is undefined, $S^\theta \cupdot T^\theta$ is undefined. This completes the proof that $\theta$ is a $(\cupdot, \emptyset)$-representation.

Finally we show that $\theta$ is $\lesssim$-complete. Let $\gamma$ be a pairwise-combinable subset of $\c A(n, n)$. If $\gamma$ has supremum $\set i\times J$ for some $J$ with $|n \setminus J| \geq 2$ then for all $S\in \gamma$ since the supremum is an upper bound and by the definition of $\lesssim$, either $S = \set i\times J$ or there is $T$ such that $S\plusdot T \sim \set i\times J$.  It follows that each $S\in \gamma$  has the form $\set i\times J_S$ for some $J_S\subseteq J$ and since the $\set i\times J$ is the least upper bound we have $J=\bigcup_{S\in \gamma} J_S$.    Then for any $\sigma\in P_n$ we have 
\begin{align*}
\sigma\in (\set i\times J)^\theta&\iff \sigma(i)\in J\\
&\iff \sigma(i)\in J_S \;\mbox{for some $S\in \gamma$}\\
&\iff \sigma\in (\set i\times J_S)^\theta \;\mbox{for some $S\in \gamma$}\\
&\iff \sigma\in \bigcup_{S\in \gamma}(\set i\times J_S)^\theta=\bigcup_{S\in \gamma}S^\theta\mbox{.}
\end{align*}
Similarly if the supremum of $\gamma$ is $I \times \set j$ for some $I$ with $|m \setminus I| \geq 2$, then $(I \times \set j)^\theta =\bigcup_{S\in \gamma} S^\theta$. 

If the supremum of $\gamma$ is $\overline{(i, j)}$ then either $\gamma = \{\overline{(i, j)}\}$, so the proof of the required equality is trivial, or, because $\gamma$ is pairwise combinable,  each $S\in \gamma$  has the form $\set i\times J_S$ or each $S\in \gamma$  has the form $I_S \times \set j$ in which cases the proof is similar to above. If the supremum of $\gamma$ is $1$, then either $\gamma = \{1\}$ or $\gamma = \{\set{(i, j)}, \overline{(i, j)}\}$ for some $i, j$, or  each $S\in \gamma$  has the form $\set i\times J_S$, or each $S\in \gamma$  has the form $I_S \times \set j$. In every case the required equality is seen to hold. So $\theta$ is a $\lesssim$-complete representation.
\end{proof}

\begin{remark}
We have seen that $\c X(3,4)$ has a $(\cupdot, \emptyset)$-representation, but,  by Lemma \ref{lem:mn} and \Cref{prop:complete}, the partial algebra $\c A(3, 4)=\c X(3, 4)/{\sim}$ does not.  Since the latter is a partial-algebra homomorphic image of the former we see that the class of $(\cupdot, \emptyset)$-representable partial algebras is not closed under partial-algebra homomorphic images, in contrast to the corresponding result for algebras representable as fields of sets, that is, boolean algebras.
\end{remark}
We now have a source of non-representable partial algebras with which to prove our first non-axiomatisability result.

\begin{theorem}\label{thm:nfa}
The class of $(\cupdot, \emptyset)$-representable partial algebras  is not finitely axiomatisable.
\end{theorem}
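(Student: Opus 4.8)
The plan is to follow the strategy announced at the start of the section: exhibit a family of non-representable partial algebras whose ultraproduct is representable. The witnesses will be the finite algebras $\c A(n, n+1)$ for $n \geq 3$. Each is finite, so by \Cref{prop:complete} any $(\cupdot, \emptyset)$-representation of it would automatically be $\lesssim$-complete; but since $|n| \neq |n+1|$, \Cref{lem:mn} tells us that $\c A(n, n+1)$ admits no $\lesssim$-complete representation at all. Hence each $\c A(n, n+1)$ is non-representable (this generalises the $\c A(3,4)$ remark above).

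Suppose, for contradiction, that the class $\class C$ of $(\cupdot, \emptyset)$-representable partial algebras were finitely axiomatisable. By \Cref{sets} it is elementary, hence closed under ultraproducts, and finite axiomatisability means it is cut out, relative to the (finitely axiomatisable) class of partial algebras, by a single sentence $\phi$. The complement of $\class C$ within the partial algebras is then defined by $\neg\phi$ and so is itself closed under ultraproducts. I would obtain the contradiction by producing an ultraproduct of the non-representable $\c A(n, n+1)$ that lands back inside $\class C$.

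Fix a non-principal ultrafilter $U$ on $\set{3, 4, 5, \dots}$ and set $\c B = \prod_n \c A(n, n+1)/U$. The crux is to show $\c B$ is representable. I would compare $\c B$ with $\c A(M, N)$, where $M = \prod_n n/U$ and $N = \prod_n (n+1)/U$. The coordinatewise inclusions $n \hookrightarrow n+1$ induce an injection $M \hookrightarrow N$ whose image omits only the single point $[(n)_n]$; as $M$ is infinite this yields $|M| = |N|$, so \Cref{lem:mn} makes $\c A(M, N)$ representable. It then remains to realise $\c B$ as a partial-subalgebra of $\c A(M, N)$: an element of $\c B$ is a $U$-class of axial sets, which determines an \emph{internal} axial subset of $M \times N$, definedness of $\plusdot$ in $\c B$ matches disjointness-and-axiality in $\c A(M, N)$ by \Los's theorem, and the disjoint union of two internal axial sets is again internal. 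Since the representable class is closed under partial-subalgebras, $\c B$ would then be representable, contradicting closure of the complement under ultraproducts.

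The main obstacle is this crux step, and within it the bookkeeping forced by the congruence $\sim$: the merged classes $1$ and $\overline{(i, j)}$ must be tracked carefully to verify that the natural map $\c B \to \c A(M, N)$ is well defined on $\sim$-classes, injective, and respects definedness of $\plusdot$ in both directions, i.e.\ that forming the ultraproduct and passing to the quotient commute appropriately. An alternative route to the same crux, which trades the embedding for a combinatorial game argument, uses the recursive axiomatisation: by \Cref{lem:fmla} and the theorem axiomatising the class by $\set{\rho_n \mid n < \omega}$, the algebra $\c B$ is representable iff $\c B \models \rho_k$ for every $k$, and by \Los's theorem this holds iff $\set{n : \c A(n, n+1) \models \rho_k} \in U$ for each $k$. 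One would then show that for each fixed game length $k$, player $\exists$ has a \ws\ in $\Gamma_k$ on $\c A(n, n+1)$ once $n$ is large enough — a bounded-length game cannot build up a configuration large enough to expose the cardinality mismatch — so the relevant index set is cofinite and hence in $U$. Either way, the heart of the matter is that the off-by-one defect is invisible to any bounded amount of first-order information, which is precisely what the ultraproduct launders away.
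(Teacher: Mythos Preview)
Your proposal is correct and your primary approach---embedding the ultraproduct $\c B$ as a partial-subalgebra of $\c A(M,N)$ with $|M|=|N|$, then invoking \Cref{lem:mn} and closure under partial-subalgebras---is exactly the paper's argument; the paper carries out precisely the $\sim$-class bookkeeping you flag as the main obstacle, writing down the map $\theta$ explicitly and verifying the back-direction of definedness of $\plusdot$ by hand. Your alternative route via the game formulas $\rho_k$ is not used in the paper but would also succeed.
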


\begin{proof}
Write $\nu$ for $\omega \setminus \{0,1,2\}$ and let $m \in \nu$.   By \Cref{lem:mn} the partial algebra $\c A(m, m+1)$ has no $\lesssim$-complete $(\cupdot, \emptyset)$-representation. Since this partial algebra is finite, it follows, by \Cref{prop:complete}, that it has no $(\cupdot, \emptyset)$-representation.

Let $U$ be a non-principal ultrafilter over $\nu$.  We claim that the ultraproduct $\Pi_{m\in\nu}\c A(m, m+1)/U$ is isomorphic to a partial-subalgebra of $\c A(\Pi_{m\in\nu}m/U,\; \Pi_{m\in\nu}\allowbreak(m+1)/U)$. Note that every element of $\Pi_{m\in\nu}\c A(m, m+1)/U$   is the equivalence class of a sequence of vertical  sets $[(\set {i_m}\times J_m)_{m \in \nu}]$ where $i_m\in m$ and $J_m\subseteq m+1$ for each $m \in \nu$, or the equivalence class of a sequence of horizontal  sets $[(I_m \times \set {j_m})_{m \in \nu}]$ where $I_m\subseteq m$ and $j \in m+1$ for each $m \in \nu$.
The partial-algebra embedding $\theta$ maps $[(\set {i_m}\times J_m)_{m \in \nu}]$ to $\set{[(i_m)_{m \in \nu}]} \times \set{[(j_m)_{m \in \nu}]\mid \set{m\in\nu\mid j_m\in J_m}\in U}$, and it maps 
 $[(I_m \times \set {j_m})_{m \in \nu}]$ to $\set{[(i_m)_{m \in \nu}]\mid
 \set{m\in\nu\mid i_m\in I_m}\in U} \times \set{[(j_m)_{m\in\nu}]}$.

 It is easy to check that $\theta$ is a well-defined partial-algebra embedding. We limit ourselves to showing that if $a^\theta \plusdot b^\theta$ is defined in $\c A(\Pi_{m\in\nu}m/U,\; \Pi_{m\in\nu}(m+1)/U)$ then $a \plusdot b$ is defined in $\Pi_{m\in\nu}\c A(m, m+1)/U$, since it is this condition that distinguishes  partial-algebra embeddings from    embeddings of  relational structures.

 We prove the contrapositive. Suppose $a \plusdot b$ is undefined and let $[(a_m)_{m \in \nu}] = a$ and $[(b_m)_{m \in \nu}] = b$. Then we can find $S \in U$ such that one of the following two possibilities holds. One, for each $m \in S$ there exists $(i_m, j_m)$ belonging to both (a representative of) $a_m$ and (a representative of) $b_m$. Or two, for each $m \in S$ there exists $i_m \neq i'_m$ and $j_m \neq j'_m$ such that $(i_m, j_m)$ belongs to  (a representative of) $a_m$ and $(i'_m, j'_m)$ belongs to (a representative of) $b_m$.  Extend $(i_m)_{m \in S}, (j_m)_{m \in S}$ and, if appropriate, $(i'_m)_{m \in S}$ and $(j'_m)_{m \in S}$ to $\nu$-sequences arbitrarily. If the first alternative holds then $([(i_m)_{m \in \nu}], [(j_m)_{m \in \nu}])$ belongs to (representatives of) both $a^\theta$ and $b^\theta$. So $a^\theta \plusdot b^\theta$ is undefined since the representatives are non-disjoint. If the second alternative holds then $[(i_m)_{m \in \nu}] \neq [(i'_m)_{m \in \nu}]$, $[(j_m)_{m \in \nu}]\neq [(j'_m)_{m \in \nu}]$ and $([(i_m)_{m \in \nu}], [(j_m)_{m \in \nu}])$ belongs to (a representative of) $a^\theta$ and $([(i'_m)_{m \in \nu}], [(j'_m)_{m \in \nu}])$ belongs to (a representative of) $b^\theta$. So $a^\theta \plusdot b^\theta$ is undefined since the union of the representatives is not axial. 
 
We now argue that $\c A(\Pi_{m\in\nu}m/U,\; \Pi_{m\in\nu}(m+1)/U)$ is representable, by showing that the cardinalities of its two parameters are equal.   The map $f:\Pi_{m\in\nu}m/U \rightarrow \Pi_{m\in\nu}(m+1)/U$ defined by $f([(i_m)_{m\in\nu}])=[(i_m+1)_{m\in\nu}]$ is injective and its range is  all of $\Pi_{m\in\nu}(m+1)/U$ except $[(0, 0, \ldots)]$.  Since these are infinite sets it follows that the cardinality of $\Pi_{m\in\nu}m/U$ equals the cardinality of  $\Pi_{m\in\nu}(m+1)/U$.  It follows by \Cref{lem:mn} that $\c A(\Pi_{m\in\nu}m/U,\; \Pi_{m\in\nu}(m+1)/U)$ is $(\cupdot, \emptyset)$-representable.

Since the partial algebra $\Pi_{m\in\nu}\c A(m, m+1)/U$ has a partial algebra embedding into a representable partial algebra and the class of representable partial algebras is closed under partial subalgebras, we conclude that $\Pi_{m\in\nu}\c A(m, m+1)/U$ is itself representable. Hence we have an ultraproduct of unrepresentable partial algebras that is itself representable. It follows by \Los's theorem that the class of $(\cupdot, \emptyset)$-representable partial algebras cannot be defined by finitely many axioms.  
\end{proof}

\begin{corollary}\label{cor:S}  Let $\sigma$ be any one of the signatures $(\cupdot), (\dcupdot), (\cupdot, \emptyset), (\dcupdot, \emptyset), (\dcupdot, \mid)$ or $(\dcupdot, \mid, \emptyset)$.  The  class of $\sigma$-representable partial algebras is not finitely axiomatisable in ${\lang L(J)}$, ${\lang L(J, 0)}$, ${\lang L(J, \compo)}$ or $\lang L(J, \compo, 0)$, as appropriate. \end{corollary}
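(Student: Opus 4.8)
The plan is to reuse the family $\set{\c A(m, m+1)\mid m\in\nu}$ built for \Cref{thm:nfa} and transport its non-representability to each remaining signature, exploiting throughout that reducts commute with ultraproducts. Fix, as there, a non-principal ultrafilter $U$ over $\nu=\omega\setminus\set{0,1,2}$. In each case I will exhibit a family of non-representable partial algebras whose $U$-ultraproduct is representable; since any finite axiomatisation could be replaced by the single sentence $\varphi$ given by its conjunction, each factor would then falsify $\varphi$ while the ultraproduct satisfied it, contradicting \Los's theorem.

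First I would dispatch the composition-free signatures. The case $(\cupdot, \emptyset)$ is \Cref{thm:nfa} itself. For $(\cupdot)$, pass to the $\plusdot$-reducts of the $\c A(m, m+1)$: each $\c A(m, m+1)$ interprets $0$ as $\emptyset$ and so validates $0\plusdot 0=0$, whence by \Cref{add0} its $\plusdot$-reduct is $\cupdot$-representable precisely when $\c A(m, m+1)$ is $(\cupdot, \emptyset)$-representable. As the latter fails, the reducts are non-representable, while the $\plusdot$-reduct of the representable ultraproduct of \Cref{thm:nfa} stays $\cupdot$-representable. For $(\dcupdot)$ and $(\dcupdot, \emptyset)$, \Cref{prop}\eqref{first} identifies these classes of abstract partial algebras with those for $(\cupdot)$ and $(\cupdot, \emptyset)$, so the conclusion transfers verbatim.

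For the signatures with composition I enrich each witness with the trivial total composition. Let $\c A'(m, m+1)$ be $\c A(m, m+1)$ equipped additionally with $a\compo b=0$ for all $a, b$, a partial $(\plusdot, \compo, 0)$-algebra. Any $(\dcupdot, \mid, \emptyset)$-representation of $\c A'(m, m+1)$ restricts to a $(\dcupdot, \emptyset)$-representation of its $(\plusdot, 0)$-reduct $\c A(m, m+1)$, which by \Cref{prop}\eqref{first} would force the impossible $(\cupdot, \emptyset)$-representability of $\c A(m, m+1)$; so each $\c A'(m, m+1)$ is non-representable. The ultraproduct $\Pi_{m\in\nu}\c A'(m, m+1)/U$ has $(\plusdot, 0)$-reduct $\Pi_{m\in\nu}\c A(m, m+1)/U$, which is $(\cupdot, \emptyset)$-representable by \Cref{thm:nfa}, and still validates the universal identity $a\compo b=0$; hence \Cref{prop}\eqref{mid} renders it $(\dcupdot, \mid, \emptyset)$-representable. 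This settles $(\dcupdot, \mid, \emptyset)$. For $(\dcupdot, \mid)$ I repeat the $(\cupdot)$ argument on the $(\plusdot, \compo)$-reducts of the $\c A'(m, m+1)$, now invoking the clause of \Cref{add0} for signatures containing $\dcupdot$ to reduce $(\dcupdot, \mid, \emptyset)$-representability to $(\dcupdot, \mid)$-representability of the $0$-free reduct together with $0\plusdot 0=0$.

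I do not anticipate a deep obstacle, since the substance has been loaded into \Cref{prop}, \Cref{add0} and \Cref{thm:nfa}; the care needed is bookkeeping. Specifically, I must confirm that reducts commute with ultraproducts, so that the $(\plusdot, 0)$- and $(\plusdot, \compo)$-reducts of each ultraproduct really are the ultraproducts of the respective reducts, and that the conditions invoked---$0\plusdot 0=0$ and the total-valued $a\compo b=0$---are first-order and hence both preserved under ultraproducts and inherited by the reducts. The one genuinely new check is the backward, representability direction in the composition case: verifying that $\Pi_{m\in\nu}\c A'(m, m+1)/U$ meets the two hypotheses of \Cref{prop}\eqref{mid}.
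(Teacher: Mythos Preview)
Your proposal is correct and follows essentially the same route as the paper: \Cref{thm:nfa} for $(\cupdot,\emptyset)$, \Cref{prop}\eqref{first} to pass between $\cupdot$ and $\dcupdot$, expansion by the trivial composition $a\compo b=0$ together with \Cref{prop}\eqref{mid} for $(\dcupdot,\mid,\emptyset)$, and \Cref{add0} for the $\emptyset$-free cases.

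The one point of divergence is how you handle the signatures without $\emptyset$. You take reducts of the witnessing family, check these remain non-representable via \Cref{add0}, and appeal to the fact that reducts commute with ultraproducts to see that their ultraproduct is representable. The paper instead runs a contrapositive: were the class without $\emptyset$ finitely axiomatisable, adjoining the single axiom $J(0,0,0)$ would finitely axiomatise the class with $\emptyset$, contradicting what has already been proved. Both arguments rest on \Cref{add0} and are equally valid; the paper's version is a touch shorter because it sidesteps the reduct/ultraproduct bookkeeping you flag at the end, while yours has the mild advantage of producing explicit witnessing families in every signature.
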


\begin{proof}
The case $\sigma = (\cupdot, \emptyset)$ is \Cref{thm:nfa}. The case $\sigma = (\dcupdot, \emptyset)$ follows by \Cref{prop}\eqref{first}, which tells us that the representation classes for $(\cupdot, \emptyset)$ and $(\dcupdot, \emptyset)$ coincide.

For the case $\sigma = (\dcupdot, \mid, \emptyset)$,  for any sets $m, n$ of cardinality greater than two, expand $\c A(m, n)$ to a partial $(\plusdot, \compo, 0)$-algebra $\c B(m, n)$ by defining $a \compo b = 0$ for all $a, b$. As in the proof of \Cref{thm:nfa}, write $\nu$ for $\omega \setminus \{0,1,2\}$ and let $U$ be a non-principal ultrafilter over $\nu$. Then for every  $m \in \nu$ the partial algebra $\c B(m, m+1)$ has no $(\dcupdot, \mid, \emptyset)$-representation, as its reduct to $(\plusdot, 0)$ has no $(\dcupdot, \emptyset)$-representation. However, as we saw in the proof of \Cref{thm:nfa}, the reduct of  $\Pi_{m\in\nu}\c B(m, m+1)/U$ to $(\plusdot, 0)$ does have a $(\dcupdot, \emptyset)$-representation and moreover, by \Los's theorem, it validates $a \compo b = 0$. By \Cref{prop}, these conditions ensure $\Pi_{m\in\nu}\c B(m, m+1)/U$ has a $(\dcupdot, \mid, \emptyset)$-representation. Once again we have an ultraproduct of unrepresentable partial algebras that is itself representable. Hence the representation class is not finitely axiomatisable.

For each of the signatures containing $\emptyset$ the result follows from the result for the corresponding signature without $\emptyset$, by \Cref{add0}. Because if the representation class for the signature without $\emptyset$ were finitely axiomatisable we could finitely axiomatise the case with $\emptyset$ by the addition of the single extra axiom $J(0,0,0)$.
\end{proof}

We can prove a stronger negative result about $\lesssim$-complete representability.

\begin{theorem}\label{thm:non elem}
The  class of $\lesssim$-completely $(\cupdot, \emptyset)$-representable partial algebras is not closed under elementary equivalence.
\end{theorem}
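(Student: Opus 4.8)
The plan is to exploit \Cref{lem:mn}, which ties $\lesssim$-complete $(\cupdot, \emptyset)$-representability of $\c A(m, n)$ to the single condition $|m| = |n|$. I would fix two infinite cardinals $\kappa < \lambda$ and compare the \emph{balanced} algebra $\c A(\kappa, \kappa)$ with the \emph{unbalanced} algebra $\c A(\kappa, \lambda)$. By \Cref{lem:mn} the former is $\lesssim$-completely representable while the latter is not. Everything then reduces to showing that these two partial algebras are elementarily equivalent; granting this, the class of $\lesssim$-completely $(\cupdot, \emptyset)$-representable partial algebras contains $\c A(\kappa, \kappa)$ but not the elementarily equivalent $\c A(\kappa, \lambda)$, and so is not closed under elementary equivalence. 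Note that one cannot instead manufacture the witnesses from the ultraproduct of \Cref{thm:nfa}: that ultraproduct is \emph{balanced} (its two parameters $\Pi m/U$ and $\Pi(m+1)/U$ have equal cardinality), so one genuinely needs two distinct infinite cardinals here.

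To establish the elementary equivalence I would argue that, provided all four parameters are infinite, the first-order theory of $\c A(m, n)$ is insensitive to the actual cardinalities of $m$ and $n$. Concretely, I would set up a back-and-forth system between $\c A(\kappa, \kappa)$ and $\c A(\kappa, \lambda)$, equivalently exhibiting a winning strategy for the duplicator in the $r$-round Ehrenfeucht--Fra\"iss\'e game for every $r < \omega$. The type of a finite tuple of (classes of) axial sets is pinned down by a finite amount of combinatorial data: for each element, whether it is $0$, $1$, a co-atom, or a proper vertical or horizontal set; the incidence pattern recording which chosen elements share a row or a column; the defined sums $\plusdot$ and the order $\lesssim$ among them; and, for each extent and co-extent involved, only whether it is empty, a singleton, or larger. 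The point is that every demand the spoiler can make when extending a tuple is a request for a new row, a new column, or a new point in some axial set with prescribed finite incidence data, and each such request can be met in \emph{both} algebras precisely because the supplies of rows, columns and points are infinite on either side. Hence the duplicator always responds and wins every game of finite length, giving $\c A(\kappa, \kappa) \equiv \c A(\kappa, \lambda)$. As a fallback one could instead invoke the Keisler--Shelah theorem and build a common ultrapower, but the back-and-forth makes transparent \emph{why} the cardinalities are first-order invisible.

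The main obstacle I anticipate is exactly this elementary equivalence, and it is delicate for a structural reason: the condition $|m| = |n|$ governing $\lesssim$-complete representability is, by the left-to-right direction of \Cref{lem:mn}, witnessed only by a bijection extracted from a \emph{complete} representation, that is, from the behaviour of arbitrary suprema, which is an inherently infinitary feature. The whole difficulty is to confirm that first-order logic cannot recover this bijection: below a co-atom the atoms split into two combinability-cliques (one per shared row, one per shared column) whose sizes are $\kappa$ and $\lambda$ in the unbalanced case but $\kappa$ and $\kappa$ in the balanced case, yet being infinite these sizes are indistinguishable to any first-order sentence. Verifying carefully that no accumulation of such local data ever assembles into a distinguishing sentence---that the back-and-forth really does go through at every finite depth---is where the real work lies. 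Once the elementary equivalence is secured, the theorem follows immediately from \Cref{lem:mn} as described above.
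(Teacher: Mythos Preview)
Your plan coincides with the paper's: it too compares $\c A(\omega_1,\omega)$ with $\c A(\omega,\omega)$, appeals to \Cref{lem:mn} for the (non-)complete-representability, and establishes elementary equivalence via an Ehrenfeucht--Fra\"iss\'e argument.

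The one place your sketch needs tightening is the back-and-forth invariant. Recording only ``empty, singleton, or larger'' for each extent is too coarse: a vertical set of size $3$ and one of size $5$ are first-order distinguishable (the latter admits a $\plusdot$-decomposition into five nonzero pairwise-combinable pieces, the former does not), so the duplicator cannot match them. The paper's invariant is finer and is what actually survives every round. After $k$ moves the horizontal components $h(b_0),\ldots,h(b_{k-1})$ of the chosen elements induce a finite partition of the row-index set on each side (and likewise the vertical components partition the column-index sets); the duplicator maintains that each pair of corresponding blocks either both contain the distinguished index $0$, or else neither does and they have equal finite cardinality or are both infinite. This relation is exactly strong enough to be refined in response to any spoiler move (split each current block according to the new element, using \eqref{eq:sim}-style matching) and strong enough to force the pebbled map to be a partial isomorphism. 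With that correction your outline is the paper's proof.
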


\begin{proof}
Consider the two partial $(\plusdot, \emptyset)$-algebras $\c A_1=\c A(\omega_1, \omega)$ and $\c A_0=\c A(\omega, \omega)$, where $\omega_1$ denotes the first uncountable ordinal.  By \Cref{lem:mn} the former is not $\lesssim$-completely $(\cupdot, \emptyset)$-representable while the latter is. 
 We prove these two partial algebras are elementarily equivalent by showing that the second player has a winning strategy in the Ehrenfeucht-Fr\"aiss\'e game of length $\omega$ played over $\c A_1$ and $\c A_0$.

Although elements of $\c A_1$ or $\c A_0$ are formally equivalence classes of axial sets, we may take $\set 0 \times \omega$ as the representative of $1$ and $\set i \times (\omega\setminus\set j)$ as the representative of $\overline{(i, j)}$, in either partial algebra.
 Since all elements are axial, each nonzero $a\in\c A_i$ uniquely determines (given this choice of representatives) sets $h_i(a)$ and $v_i(a)$ such that $a=h_i(a)\times v_i(a)$, for $i=0, 1$. For example $h_1(\set i\times J)=\set i,\;v_1(\set i\times J)=J,\; h_1(1)=\set0$ and $v_1(1)=\omega$. We will view $0$ as $\emptyset \times \emptyset$, in that $h_i (0) = v_i(0) = \emptyset$.

    For any sets $X, Y$ we write $X\approx Y$ if either
\begin{itemize}
\item
both $X$ and $Y$ contain $0$

\item
or neither contain $0$ and either $|X|=|Y|$ or both sets are infinite. 
\end{itemize}
   Observe, for any $X, Y$ and $U\subseteq X$, that
\begin{equation}\label{eq:sim}
X\approx Y\;\;\iff\;\;\mbox{ there is } V\subseteq Y\mbox{  with } U\approx V\mbox{ and }X\setminus U\approx Y\setminus V.
\end{equation}

 Initially there are no pebbles in play.  After $k$ rounds there will be $k$ pebbles on $\bar b=(b_0, \ldots, b_{k-1})\in\c A_1^k$ and $k$ matching pebbles on $\bar a=(a_0, \ldots, a_{k-1})\in\c A_0^k$.   For each $S\subseteq k$ let 
\begin{align*}
h_1(\bar b, S) &= \bigcap_{i\in S} h_1(b_i)\cap\bigcap_{i\in k\setminus S}(\omega_1\setminus h_1(b_i)),\\
v_1(\bar b, S) &= \bigcap_{i\in S} v_1(b_i)\cap\bigcap_{i\in k\setminus S}(\omega\setminus v_1(b_i)),
\end{align*}
with similar definitions for $h_0(\bar a, S)$ and $v_0(\bar a, S)$.    Observe that $\set{h_1(\bar b, S)\mid S\subseteq k}\setminus\set\emptyset$ is a finite partition of $\omega_1$ and each of $\set{v_1(\bar b, S)\mid S\subseteq k}\setminus\set\emptyset, \; \set{h_0(\bar a, S)\mid S\subseteq k}\setminus\set\emptyset$ and $\set{v_0(\bar a, S)\mid S\subseteq k}\setminus\set\emptyset$ is a finite partition of $\omega$.

  As an induction hypothesis we assume, for each $S\subseteq k$, that $h_1(\bar b, S)\approx h_0(\bar a, S)$ and $v_1(\bar b, S)\approx v_0(\bar a, S)$.
Initially, when $k=0$, the only subset of $k$ is $\emptyset$ and we have $h_1((\;), \emptyset)=\omega_1\approx\omega=h_0((\;), \emptyset)$ and $v_1((\;), \emptyset)=\omega=v_0((\;), \emptyset)$.

In round $k$, suppose $\forall$ picks $b_k\in\c A_1$.  The subsets of $k+1$ are $\set{ S \cup \set k\mid S\subseteq k} \cup \set{ S \mid S\subseteq k}$. For any $S\subseteq k$, since $h_0((a_0, \ldots, a_{k-1}), S)\approx h_1((b_0, \ldots, b_{k-1}), S)$, by \eqref{eq:sim} there is $X_S\subseteq h_0((a_0, \ldots, a_{k-1}), S)$ such that 
\begin{equation}\label{eq:xS}
\begin{aligned}
X_S&\approx h_1((b_0, \ldots, b_k), S\cup\set k),\\h_0((a_0, \ldots, a_{k-1}), S)\setminus X_S&\approx h_1((b_0, \ldots, b_k), S)\mbox{.}
\end{aligned}
\end{equation}
Similarly there is $Y_S\subseteq v_0((a_0, \ldots, a_{k-1}), S)$ such that $Y_S\approx v_1((b_0, \ldots, b_k), S\cup\set k)$ and $v_0((a_0, \ldots, a_{k-1}), S)\setminus Y_S\approx v_1((b_0, \ldots, b_k), S)$. Player $\exists$ lets $a_k$ be the element of $\c A_0$ represented by $(\bigcup_{S\subseteq k} X_S)\times (\bigcup_{S\subseteq k} Y_S)$, which is an axial set since $b_k$ is. In fact more is true: because $b_k$ is the representative of its equivalence class, $(\bigcup_{S\subseteq k} X_S)\times (\bigcup_{S\subseteq k} Y_S)$ will be the representative of \emph{its} equivalence class, so $h_0(a_k) = \bigcup_{S\subseteq k} X_S$ and $v_0(a_k) = \bigcup_{S\subseteq k} Y_S$. Then it follows that $h_0((a_0, \ldots, a_k), S\cup\set{k})=X_S$ and $h_0((a_0, \ldots, a_k), S)=h_0((a_0, \ldots, a_{k-1}), S)\setminus X_S$ and similar identities hold for the vertical components.  Hence, by \eqref{eq:xS}, the induction hypothesis is maintained.  Similarly if $\forall$ picks $a_k\in \c A_0$, we know $\exists$ can find $b_k\in \c A_1$ so as to maintain the induction hypothesis.

 We claim  the induction hypothesis ensures $\exists$ will not lose the play. 
 To prove that $\exists$ does not lose, we must prove that $\set{(a_i, b_i) \mid i < k}$ is a partial isomorphism from $\c A_1$ to $\c A_0$ for every $k$. That is, we must prove for any $i, j, l<k$ that
\begin{enumerate}
\item\label{en:0}
$b_i = 0 \iff a_i = 0$,
\item\label{en:ij}
$b_i = b_j \iff a_i = a_j$, 
\item\label{en:J}
$J(b_{i}, b_{j}, b_{l}) \iff J(a_{i}, a_{j}, a_{l})$.
\end{enumerate}
Conditions \eqref{en:0} and \eqref{en:ij} follow immediately from the induction hypothesis. 

 Given that \eqref{en:0} and \eqref{en:ij} hold, it follows that  \eqref{en:J} also holds whenever $0\in\set{b_i, b_j}$.  To prove \eqref{en:J} for the remaining cases, we assume $J(b_i, b_j, b_l)$ holds, where $0\not\in\set{b_i, b_j}$ and distinguish three  cases: $b_l=1, \; b_l=\overline{(i', j')}$ (for some $i' \in \omega_1,\; j' \in \omega$) and $b_l\not\in \set 1 \cup \set{\overline{(i', j')}\mid i' \in \omega_1, \; j' \in \omega}$. 

  For $b_l=1$ we have  $h_1(b_{l})=\set0,\; v_1(b_{l})=\omega$ and either $h_1(b_{i})=h_1(b_{j})$ is a singleton and $v_1(b_{i})\cupdot v_1(b_{j})=\omega$, or $v_1(b_{i})=v_1(b_{j})$ is a singleton and $h_1(b_{i})\cupdot h_1(b_{j})=\omega_1$.  The induction hypothesis shows that a similar condition holds for the vertical and horizontal components of $a_i, a_j, a_l$, hence $J(a_i, a_j, a_l)$ also holds. 

 For $b_l=\overline{(i', j')}$ we have $h_1(b_l)=\set{i'},\; v_1(b_l)=\omega\setminus\set{j'}$ and either $h_1(b_i)=h_1(b_j)=\set{i'}$ and $v_1(b_i)\cupdot v_1(b_j)=\omega\setminus\set{j'}$, or $v_1(b_i)=v_1(b_j)=\set{j'}$  and $h_1(b_i)\cupdot h_1(b_j)=\omega_1\setminus\set{i'}$.  Again, the induction hypothesis implies that a similar condition holds for the vertical and horizontal components of $a_i, a_j, a_l$, hence $J(a_i, a_j, a_l)$ holds. 

 When $b_l\not\in\set{1}\cup \set{ \overline{(i', j')}\mid i'<\omega_1,\; j'<\omega}$ (still with $0\not\in\set{b_i, b_j}$) then either $h_1(b_i)=h_1(b_j)=h_1(b_l)$ is a singleton and $v_1(b_{i})\cupdot v_1(b_{j})=v_1(b_{l})$, or a  similar case, with $h_1$ and $v_1$ swapped.  As before, an equivalent property holds on $a_i, a_j, a_l$ and $J(a_i, a_j, a_l)$ follows. This completes the argument that the implication $J(b_{i}, b_{j}, b_{l})\allowbreak\implies J(a_{i}, a_{j}, a_{l})$ is valid. The implication $J(a_{i}, a_{j}, a_{l})\implies J(b_{i}, b_{j}, b_{l})$ is similar.

As $\exists$ can win all $\omega$ rounds of the play,  the two structures $\c A_1$ and $\c A_0$ are elementarily equivalent. Hence the $\lesssim$-completely $(\cupdot, \emptyset)$-representable partial algebras are not closed under elementary equivalence.
\end{proof}

\begin{corollary}\label{elementary-corollary}  Let $\sigma$ be any one of the signatures $(\cupdot), (\dcupdot), (\cupdot, \emptyset), (\dcupdot, \emptyset), (\dcupdot, \mid)$ or $(\dcupdot, \mid, \emptyset)$.   The  class of $\lesssim$-completely $\sigma$-representable partial algebras is not closed under elementary equivalence.\end{corollary}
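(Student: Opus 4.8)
The plan is to reuse the two partial algebras $\c A_1 = \c A(\omega_1, \omega)$ and $\c A_0 = \c A(\omega, \omega)$ from \Cref{thm:non elem}, together with their composition expansions $\c B_1 = \c B(\omega_1, \omega)$ and $\c B_0 = \c B(\omega, \omega)$ (as in \Cref{cor:S}, where $a \compo b = 0$) for the signatures containing $\mid$, and to reduce every one of the six signatures to the already-settled $(\cupdot, \emptyset)$ case. For each $\sigma$ I want an appropriate reduct or expansion of these algebras that remains elementarily equivalent while the $\c A_0$-version (resp.\ $\c B_0$-version) is $\lesssim$-completely $\sigma$-representable and the $\c A_1$-version (resp.\ $\c B_1$-version) is not. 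The elementary equivalence will come almost for free from \Cref{thm:non elem}, and the (non-)representability from \Cref{lem:mn}, once I observe that the relevant constructions preserve $\lesssim$-completeness.

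The central observation is that the order $\lesssim$, hence the property of being a $\lesssim$-complete representation, is defined purely in terms of $\plusdot$ and set-theoretic union; it is therefore insensitive to whether $0$ or $\compo$ lie in the signature, and it is preserved by each of the three representation-building constructions of \Cref{prop}. Indeed, each of those constructions sends $a^\theta$ to a set obtained by a fixed injective relabelling of the base points of $a^\theta$ (namely $x \mapsto (x,x)$, $x \mapsto (f(x), f(x))$, or $x \mapsto (x, f(x))$), and such relabellings commute with arbitrary unions and both preserve and reflect disjointness. Consequently, if $\theta$ is $\lesssim$-complete then so is the representation built from it, and conversely in the cases where the construction is reversible. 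I would record this as a one-line preliminary remark before the case analysis.

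With this in hand the six cases proceed uniformly. For $(\cupdot)$ I pass to the $\plusdot$-reducts: elementary equivalence survives taking reducts; the reduct of $\c A_0$ inherits a $\lesssim$-complete $(\cupdot)$-representation by forgetting $0$ (one exists by \Cref{lem:mn} since $|\omega|=|\omega|$); and the reduct of $\c A_1$ is not $\lesssim$-completely $(\cupdot)$-representable because the left-to-right argument of \Cref{lem:mn} never mentions $0$ and so still forces $|\omega_1|=|\omega|$. For $(\dcupdot)$ and $(\dcupdot, \emptyset)$ I transport the set-case counterexample across \Cref{prop}\eqref{first}: the $\c A_0$-side is representable by pushing its $\lesssim$-complete set-representation forward, while the $\c A_1$-side, were it $\lesssim$-completely representable by partial functions, would pull back to a $\lesssim$-complete set-representation, contradicting \Cref{lem:mn}. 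For $(\dcupdot, \mid)$ and $(\dcupdot, \mid, \emptyset)$ I use $\c B_1, \c B_0$, whose $\plusdot$-reducts are $\c A_1, \c A_0$, so $\lesssim$-completeness is again governed entirely by the $\plusdot$-part: the $\c B_0$-side is $\lesssim$-completely representable by \Cref{prop}\eqref{mid} combined with the preservation remark, and the $\c B_1$-side is not, since dropping $\mid$ and applying the $(\dcupdot)$ argument would again contradict \Cref{lem:mn}.

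The one genuinely new point, and the step I expect to need the most care, is verifying that $\c B_1 \equiv \c B_0$ once composition is added, since this does not follow formally from $\c A_1 \equiv \c A_0$. Here, however, $\compo$ is the constant operation returning $0$, and $0$ is first-order definable (it is the unique $x$ with $J(x,x,x)$, as $x \plusdot x$ is defined exactly when $x$ is the empty set). Thus the ternary relation coding $\compo$ is definable over $\lang L(J)$, so the expansion is definitional and preserves elementary equivalence; equivalently, the Ehrenfeucht-Fr\"aiss\'e argument of \Cref{thm:non elem} carries over verbatim, the only extra atomic condition to maintain being preservation of $\compo$, which reduces to the requirement $b_l = 0 \iff a_l = 0$ already secured in that proof. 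After establishing this, the corollary follows by assembling the six cases.
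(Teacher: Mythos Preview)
Your proposal is correct and follows essentially the same route as the paper: both use the pair $\c A(\omega_1,\omega),\ \c A(\omega,\omega)$ from \Cref{thm:non elem}, transport between set- and partial-function representations via the constructions of \Cref{prop} (noting these preserve $\lesssim$-completeness), and handle composition by the definitional expansion $a\compo b=0$. The only organisational difference is that the paper dispatches the three $\emptyset$-free signatures uniformly via a $\lesssim$-complete version of \Cref{add0}, whereas you argue for $(\cupdot)$ directly by observing that the left-to-right half of \Cref{lem:mn} never uses $0$; both routes are short and equivalent in content.
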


\begin{proof}
The case $\sigma = (\cupdot, \emptyset)$ is \Cref{thm:non elem}. For the case $\sigma = (\dcupdot, \emptyset)$, note that the proof used in \Cref{prop}\eqref{first} of the equivalence of representability by sets and by partial functions extends to $\lesssim$-complete representability. Hence the $\lesssim$-complete representation classes for $(\cupdot, \emptyset)$ and $(\dcupdot, \emptyset)$ coincide.

 For the case $\sigma = (\dcupdot, \mid, \emptyset)$, let $\c A_1, \c A_0$ be as defined in \Cref{thm:non elem}.  Expand $\c A_1$ and $\c A_0$  by adding a binary operation ${\compo}$ defined by $a \compo b=0$. It is clear that the two expansions are still elementarily equivalent since we have given the same first-order definition of $\compo$ for both. The expansion of $\c A_1$ does not have a $\lesssim$-complete $(\dcupdot, \mid, \emptyset)$-representation as $\c A_1$ itself is not completely representable. The expansion of $\c A_0$ does  have a $\lesssim$-complete $(\dcupdot, \mid, \emptyset)$-representation, which we can easily see via the same method employed in the proof of \Cref{prop}\eqref{mid}.

The results for signatures not including $\emptyset$ again follow straightforwardly from those for the corresponding signatures with $\emptyset$.  For a signature with $\emptyset$, take any elementarily equivalent $\c A_1, \c A_2$ with $\c A_1$ $\lesssim$-completely representable and $\c A_2$ not. Let $\c B_1, \c B_2$ be the reducts of $\c A_1, \c A_2$ to the signature without $0$. Then $\c B_1$ is $\lesssim$-completely representable since $\c A_1$ is. As $\c A_1$ is representable, it satisfies $J(0,0,0)$, so $\c A_2$ does too, by elementary equivalence. Now note that the content of \Cref{add0} applies to $\lesssim$-complete representability just as it does to representability. Hence if $\c B_2$ were $\lesssim$-completely representable then $\c A_2$ would have to be---a contradiction. Hence  $\c B_2$ is not $\lesssim$-completely representable. So for the signature without $\emptyset$ we have elementarily equivalent $\c B_1, \c B_2$ with the first $\lesssim$-completely representable and the second not.
\end{proof}

Finally we prove that all the negative results concerning representability for signatures containing $\cupdot$ carry over to signatures containing $\minusdot$.
First note that if a partial algebra $\c A=(A, \plusdot, \negdot)$ has a $(\cupdot,\minusdot)$-representation then it satisfies
\begin{equation}\label{eq:abc}a \negdot b = c \iff b \plusdot c = a.\end{equation} 
However as we see in the following example there exist partial $(\plusdot,\negdot)$-algebras satisfying \eqref{eq:abc}, whose $\plusdot$-reduct is $\cupdot$-representable but whose $\negdot$-reduct has no $\minusdot$-representation. Similarly there exist partial $(\plusdot,\negdot)$-algebras satisfying \eqref{eq:abc}, whose $\negdot$-reduct is $\minusdot$-representable but whose $\plusdot$-reduct has no $\cupdot$-representation.

\begin{example}\label{counterexample}
Our first partial algebra can be quite simple: a partial algebra consisting of a single element $a$, with $a \plusdot a$ and $a \negdot a$ both undefined. It satisfies \eqref{eq:abc} and is $\cupdot$-representable but not $\minusdot$-representable. Moreover, we give an example of a partial algebra containing a zero element. The domain is $\powerset\{1,2,3\} \setminus \{3\}$ and we define $\plusdot$ as $\cupdot$ and then define $\negdot$ using \eqref{eq:abc}. The identity map is a $\cupdot$-representation of the $\plusdot$-reduct (in fact, a $(\cupdot, \emptyset)$-representation of the $(\plusdot, 0)$-reduct).
Suppose $\theta$ is a $\minusdot$-representation of the $\negdot$-reduct. We show that $\{1\}^\theta \subseteq \{1,3\}^\theta$, which is a contradiction as $\{1, 3\} \negdot \{1\}$ is undefined. Let $x \in \{1\}^\theta$. Then $x \in \{1,2,3\}^\theta$ since $\{1,2,3\} \negdot \{1\}$ is defined.  As $\set{1,2}\negdot\set 1=\set 2$ and $x\in\set1^\theta$ we cannot have $x\in\set2$.   From $\set{1,2,3}\negdot\set2=\set{1,3}$ we deduce that  $x\in\set{1,3}^\theta$.

Similarly, if we take a partial algebra with domain $\powerset\set{1, 2,3}\setminus\set{1, 2,3}$, define $\negdot$ as $\minusdot$ and define $\plusdot$ using \eqref{eq:abc}, the identity map represents the $\negdot$-reduct, but the $\plusdot$-reduct of the partial algebra has no $\cupdot$-representation. To see this, note that since $\set{1,3}=\set1\plusdot\set 3$, in any $\cupdot$-representation $\set{1}$ and $\set3$ would have to  be represented by disjoint sets. By similar arguments, $\set1, \set2$ and $\set3$ would have to be represented by pairwise disjoint sets, contradicting the fact that $\set1\plusdot\set2\plusdot\set3$ is undefined.
\end{example}
Notwithstanding \Cref{counterexample} there is a simple condition that ensures a $\cupdot$-representation is always a $\minusdot$-representation, and vice versa.
\begin{definition}\label{def:complement}
A partial algebra $\c A=(A, \plusdot, \negdot, \ldots)$ is \defn{complemented} if it satisfies \eqref{eq:abc} and there is a unique $1 \in \c A$ such that $1\negdot a$ is  defined for all $a\in\c A$.  We write $\overline a$ for $1\negdot a$. 
\end{definition}
Observe  by \eqref{eq:abc} that
\begin{equation}
a\plusdot\overline a=1\label{eq:complement}\text{.}\end{equation}
Hence in a complemented partial algebra $\c A$, if $\sigma$ is a signature containing either $\cupdot$ or $\minusdot$ and $\theta$ is a $\sigma$-representation of $\c A$ then 
\begin{align}\label{eq:complement2}
{\overline a}^{\theta}=\overline{a^\theta}\text{,}
\end{align}
 where $\overline Y=1^\theta\minusdot Y$ for any $Y\subseteq 1^\theta$.

Before we articulate the consequences of a partial algebra being complemented, we describe a $\minusdot$-analogue of $\lesssim$-completeness. 
In any partial $(\ldots, \negdot, \ldots)$-algebra $\c A$, define a relation $\lesssim'$  by letting $a\lesssim' b$ if and only if $a =b$ or $b\negdot a$ is defined. If $\c A$ is $(\ldots, \minusdot, \ldots)$-representable then it is clear that $\lesssim'$ is a partial order. For $(\plusdot, \negdot)$ structures satisfying \eqref{eq:abc}, observe that $\lesssim'=\lesssim$.  

\begin{definition}
A subset $S$ of a partial $(\ldots, \negdot, \ldots)$-algebra $\c A$ is \defn{$\negdot$-pairwise-combinable} if for all distinct $s, t\in S$ there exists  $u\in\c A$ such that $u\negdot s=t$.
 As in \Cref{completely} we may define a $(\ldots, \minusdot, \ldots)$-representation to be \defn{$\lesssim'$-complete} if it maps $\lesssim'$-suprema of $\negdot$-pairwise-combinable sets  to (necessarily disjoint) unions.  
\end{definition}

\begin{lemma}\label{lem:comp}
Let $\c A=(A,\plusdot, \negdot, \ldots)$ be complemented and let $\theta$ be a map from $\c A$ to a subset of $\powerset (X)$ (for some $X$).  Then $\theta$ is a $\cupdot$-representation (of the $\plusdot$-reduct) if and only if it is a $\minusdot$-representation (of the $\negdot$-reduct).  Moreover, if $\theta$ is a representation it is $\lesssim$-complete if and only if it is  $\lesssim'$-complete.
\end{lemma}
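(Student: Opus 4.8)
The plan is to reduce everything to the abstract identity \eqref{eq:abc} together with the two facts guaranteed by complementedness: that $\overline a = 1\negdot a$ exists for every $a$, and that $a\plusdot\overline a = 1$ by \eqref{eq:complement}. The first thing I would record is that $1$ represents a top element: since $1\negdot a$ is defined for every $a$, in either kind of representation we get $a^\theta\subseteq 1^\theta$ for all $a$, and hence ${\overline a}^\theta = 1^\theta\setminus a^\theta$ as in \eqref{eq:complement2}. With this in hand the two representation conditions become visibly dual, each translating into the other via \eqref{eq:abc}. The genuine content is not the value equations but the two \emph{definedness} equivalences, namely that $a\negdot b$ is defined iff $b^\theta\subseteq a^\theta$ and that $a\plusdot b$ is defined iff $a^\theta\cap b^\theta=\emptyset$; it is precisely for the harder implication of these that complementedness is needed.

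For the direction ``$\cupdot$-representation $\Rightarrow$ $\minusdot$-representation'', the easy half is immediate: if $a\negdot b=c$ then \eqref{eq:abc} gives $b\plusdot c=a$, so $b^\theta$ and $c^\theta$ are disjoint with union $a^\theta$; thus $b^\theta\subseteq a^\theta$ and $(a\negdot b)^\theta=c^\theta=a^\theta\setminus b^\theta$. The substantial half is the converse definedness: assuming only $b^\theta\subseteq a^\theta$, I would manufacture the element $a\negdot b$. Since ${\overline a}^\theta=1^\theta\setminus a^\theta$ is disjoint from $b^\theta$, the element $e=\overline a\plusdot b$ is defined (here I use that $\theta$ is a $\cupdot$-representation), and then $\overline e=1\negdot e$ exists by complementedness with ${\overline e}^\theta=1^\theta\setminus e^\theta=a^\theta\setminus b^\theta$. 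A direct check shows $b^\theta$ and ${\overline e}^\theta$ are disjoint with union $a^\theta$, so $b\plusdot\overline e=a$ by faithfulness, whence \eqref{eq:abc} yields $a\negdot b=\overline e$, as required. Closure of the image under $\minusdot$ is then automatic, since every inclusion $b^\theta\subseteq a^\theta$ now arises from a defined $a\negdot b$. The reverse direction ``$\minusdot$-representation $\Rightarrow$ $\cupdot$-representation'' is entirely symmetric, swapping the roles of $\plusdot$ and $\negdot$ and of disjointness and inclusion: the easy half uses that $a\plusdot b=c$ gives $c\negdot a=b$, and for the converse, given $a^\theta\cap b^\theta=\emptyset$ I would note $b^\theta\subseteq{\overline a}^\theta$, form $d=\overline a\negdot b$, take $\overline d$, and verify $\overline d\negdot a=b$ so that $a\plusdot b=\overline d$ by \eqref{eq:abc}.

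For the ``moreover'' clause I would not reprove completeness but simply observe that the two notions coincide definitionally. A complemented algebra satisfies \eqref{eq:abc}, so by the observation that $\lesssim'=\lesssim$ for structures satisfying \eqref{eq:abc}, the suprema referred to in the two definitions are the same. Moreover $\negdot$-pairwise-combinability is the same condition as pairwise-combinability: for an ordered pair $(s,t)$, the clause ``there exists $u$ with $u\negdot s=t$'' is, by \eqref{eq:abc}, exactly ``$s\plusdot t$ is defined''. Hence a set is $\negdot$-pairwise-combinable precisely when it is pairwise-combinable, and the requirement ``$a^\theta=\bigcup_{s\in S}s^\theta$ for every such $S$ with supremum $a$'' is literally identical in Definition~\ref{completely} and in its $\lesssim'$-analogue. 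Therefore $\lesssim$-completeness and $\lesssim'$-completeness agree.

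The only place where real work occurs is the converse-definedness arguments of the first two paragraphs. I expect the manipulation of complements there---verifying that the candidate elements $\overline e$ and $\overline d$ genuinely witness the needed operations through \eqref{eq:abc}, rather than merely matching up under $\theta$---to be the main obstacle; once the top element $1^\theta$ and the complement equation ${\overline a}^\theta=1^\theta\setminus a^\theta$ are established, the remaining set-theoretic verifications are routine and the ``moreover'' clause is immediate.
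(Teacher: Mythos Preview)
Your proposal is correct and follows essentially the same route as the paper: in each direction you manufacture the missing witness as $\overline{\overline a\plusdot b}$ (respectively $\overline{\overline a\negdot b}$), compute its image under $\theta$ via \eqref{eq:complement2}, and then invoke \eqref{eq:abc}. The only cosmetic difference is that where you verify $b\plusdot\overline e=a$ by computing both sides under $\theta$ and appealing to injectivity, the paper phrases the same step as a cancellation of $\overline a$ from $(\overline a\plusdot b)\plusdot\overline{\overline a\plusdot b}=1=\overline a\plusdot a$; your treatment of the ``moreover'' clause is identical to the paper's.
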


\begin{proof}
Suppose $\c A$ is complemented and let $\theta$ be a $\minusdot$-representation.   For any $a, b\in \c A$, if $a \plusdot b$ is defined then by \eqref{eq:abc} we know that $(a \plusdot b) \negdot a = b$, which, by our hypothesis about $\theta$, implies that $a^\theta$ is disjoint from $b^\theta$, so $a^\theta \cupdot b^\theta$ is defined. We now show that, conversely, if $a^\theta \cupdot b^\theta$ is defined then $a \plusdot b$ is defined and 
$(a\plusdot b)^\theta = a^\theta\cupdot b^\theta$. Using equations to mean both sides are defined and  equal, assuming $a^\theta\cupdot b^\theta$ is defined, we have
\begin{align*}
 a^\theta\cap b^\theta&=\emptyset &&\mbox{by the definition of $\cupdot$}\\
 \overline a^\theta&\supseteq b^\theta&&\mbox{as }{\overline{a}}^\theta=\overline{a^\theta}\mbox{ and }b^\theta \subseteq 1^\theta\\
\overline{a} \negdot b&\mbox{ is defined} && \mbox{as }\theta\mbox{ is a $\minusdot$-representation}\\
\overline{\overline{a} \negdot b}&\mbox{ is defined} && \mbox{as }\c A\mbox{ is complemented}\\
{(\overline{\overline a\negdot b})}^\theta&=\overline{\overline{a^\theta} \minusdot b^\theta}&&\mbox{as }\theta\mbox{ is a $\minusdot$-representation and by \eqref{eq:complement2}}\\
&=a^\theta\cupdot b^\theta&&\mbox{by elementary set theory}\\
\overline{\overline a\negdot b}\negdot a&=b&&\mbox{as }\theta \mbox{ is a $\minusdot$-representation}\\
 a \plusdot b&=\overline{\overline a\negdot b}&&\mbox{by }\eqref{eq:abc}\\
 (a\plusdot b)^\theta &= a^\theta\cupdot b^\theta&&\mbox{by the calculation of }{(\overline{\overline a\negdot b})}^\theta\mbox{ above}
\end{align*}
and hence $\theta$ represents $\plusdot$ correctly as $\cupdot$. 

Dually, if $\theta$ is a $\cupdot$-representation and  $a \negdot b$ is defined then we know by \eqref{eq:abc} that $b \plusdot (a \negdot b)  = a$, implying  $a^\theta \minusdot b^\theta$ is defined. For the converse and for showing that when both are defined they are equal, assume $a^\theta\minusdot b^\theta$ is defined, so
\begin{align*} a^\theta&\supseteq b^\theta &&\mbox{by the definition of $\minusdot$}\\
 \overline a^\theta\cap b^\theta&=\emptyset&&\mbox{as }\overline a^\theta=\overline{a^\theta}\\
 \overline a\plusdot b&\mbox{ is defined}&&\mbox{as $\theta$ is a $\cupdot$-representation}\\
 (\overline a \plusdot b) \plusdot \overline{\overline a \plusdot b}& = 1 = \overline a \plusdot a&&\mbox{by \eqref{eq:complement}}\\
 b \plusdot  \overline{\overline a \plusdot b} &= a&&\mbox{cancelling the $\overline{a}$'s, as $\theta$ is a $\cupdot$-representation}\\
 (a\negdot b)^\theta&={\overline{\overline a\plusdot b}}^{\;\theta}&&\mbox{by \eqref{eq:abc}}\\
&={\overline {\overline{a^\theta}\plusdot b^\theta}}&&\mbox{as $\theta$ is a $\minusdot$-representation and by \eqref{eq:complement2}}\\
&=a^\theta\minusdot b^\theta&&\mbox{by elementary set theory}
\end{align*}
and so $\negdot$ is correctly represented as $\minusdot$.

For the final sentence of this lemma we do not need $\c A$ to be complemented, only that it validates \eqref{eq:abc}. Then the concepts `pairwise combinable' and `$\negdot$-pairwise combinable' coincide and the relations $\lesssim$ and $\lesssim'$ are equal. Hence the concepts `$\lesssim$-complete' and `$\lesssim'$-complete' coincide.
\end{proof}

\begin{theorem}\label{minus-theorem}  Suppose $\minusdot$ is included in $\sigma$ and all symbols in $\sigma$ are from $\set{\cupdot, \minusdot, \emptyset}$.
 The class of partial algebras $\sigma$-representable by sets is not finitely axiomatisable.  The class of partial algebras $\lesssim'$-completely $\sigma$-representable by sets is not closed under elementary equivalence.
\end{theorem}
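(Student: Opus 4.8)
The plan is to reduce both claims to their $\cupdot$-counterparts, \Cref{thm:nfa} and \Cref{thm:non elem}, by fitting each partial algebra with a subset-complement operation defined through \eqref{eq:abc} so that it becomes complemented, and then transporting representability across $\cupdot$ and $\minusdot$ by means of \Cref{lem:comp}. Concretely, I would expand every $\c A(m, n)$ to a $(\plusdot, \negdot, 0)$-algebra $\c A'(m, n)$ by setting $a \negdot b = c$ exactly when $b \plusdot c = a$. This $\negdot$ is a genuine (single-valued) partial operation because the $\plusdot$-reduct of $\c A(m, n)$ is $\cupdot$-representable, so $b \plusdot c = b \plusdot c'$ forces $c = c'$. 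I would then verify that $\c A'(m, n)$ is complemented in the sense of \Cref{def:complement}: the class $1$ of $\set i \times n$ is a top for $\lesssim$, since every axial set has an explicit disjoint complement whose union with it is $1$, and this top is unique by antisymmetry of $\lesssim$. Thus each $\c A'(m, n)$ meets the hypotheses of \Cref{lem:comp}.

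For non-finite-axiomatisability I would reuse the construction of \Cref{thm:nfa}. Writing $\nu = \omega \setminus \set{0, 1, 2}$, each factor $\c A'(m, m+1)$ fails to be $\minusdot$-representable, for a $\minusdot$-representation would, by \Cref{lem:comp}, also be a $\cupdot$-representation of the $\plusdot$-reduct, which would contradict \Cref{thm:nfa}; hence no factor is $\sigma$-representable for any $\sigma$ containing $\minusdot$. For a non-principal ultrafilter $U$ on $\nu$, the $(\plusdot, 0)$-reduct of $\Pi_{m \in \nu} \c A'(m, m+1)/U$ is precisely the ultraproduct shown $(\cupdot, \emptyset)$-representable in \Cref{thm:nfa}. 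Since being complemented is captured by first-order sentences---\eqref{eq:abc} together with the assertion that there is a unique $t$ with $\forall a\, \exists c\, J(a, c, t)$---\Los's theorem shows this ultraproduct is again complemented, so by \Cref{lem:comp} its $(\cupdot, \emptyset)$-representation is simultaneously a $\minusdot$-representation, that is, a $(\cupdot, \minusdot, \emptyset)$-representation. Reducing to the appropriate signature gives a $\sigma$-representation for each of the four signatures, so an ultraproduct of non-representable algebras is representable and \Los's theorem rules out a finite axiomatisation.

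For the failure of closure under elementary equivalence I would take $\c A'_1$ and $\c A'_0$, the complemented expansions of $\c A(\omega_1, \omega)$ and $\c A(\omega, \omega)$. As $\negdot$ is first-order definable from $\plusdot$ via \eqref{eq:abc}, the elementary equivalence established by the Ehrenfeucht--Fra\"iss\'e argument of \Cref{thm:non elem} lifts to $\c A'_1 \equiv \c A'_0$. By \Cref{lem:mn} and the ``moreover'' clause of \Cref{lem:comp}, a $\lesssim$-complete $(\cupdot, \emptyset)$-representation of $\c A(\omega, \omega)$ yields a $\lesssim'$-complete $(\cupdot, \minusdot, \emptyset)$-representation of $\c A'_0$; conversely a $\lesssim'$-complete $\sigma$-representation of $\c A'_1$ would, by \Cref{lem:comp}, give a $\lesssim$-complete $\cupdot$-representation of $\c A(\omega_1, \omega)$, and since $0 \plusdot 0 = 0$ forces $0$ to be sent to $\emptyset$ this would be a $\lesssim$-complete $(\cupdot, \emptyset)$-representation, contradicting \Cref{lem:mn}. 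The signatures without $\emptyset$ are handled by this same observation exactly as in \Cref{elementary-corollary}.

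The main obstacle is the bookkeeping around complementedness: one must check that $\negdot$ defined by \eqref{eq:abc} is single-valued (which is where representability of the $\plusdot$-reduct is used) and, crucially, that complementedness survives the ultraproduct, which is why it matters that it can be phrased as a first-order condition. Once complementedness is secured, both results are a routine transfer through \Cref{lem:comp}, and no new combinatorics beyond the $\cupdot$ case is needed.
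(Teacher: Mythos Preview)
Your proposal is correct and follows essentially the same route as the paper: expand each $\c A(m,n)$ via \eqref{eq:abc} to a complemented $(\plusdot,\negdot,0)$-algebra, use \Cref{lem:comp} to transfer (complete) representability between $\cupdot$ and $\minusdot$, and then recycle the ultraproduct argument of \Cref{thm:nfa} and the Ehrenfeucht--Fra\"iss\'e argument of \Cref{thm:non elem}. The only cosmetic difference is that you take the ultraproduct of the full expansions $\c A'(m,m+1)$ and invoke \Los\ to preserve complementedness, whereas the paper takes the ultraproduct of the $\sigma$-reducts first and then re-expands via \eqref{eq:abc}; since ultraproducts commute with reducts these are equivalent, and in both cases the $(\plusdot,0)$-reduct is exactly $\Pi_{m\in\nu}\c A(m,m+1)/U$.
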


\begin{proof}
 For $m, n$ of cardinality greater than two,  let $\c A'(m, n)$ be the expansion of $\c A(m, n)$ to $(\plusdot, \negdot, 0)$ where $\negdot$ is defined by \eqref{eq:abc}.    Observe that $\c A'(m, n)$ is complemented.   Let $\c A_\sigma(m, n)$ be the reduct of $\c A'(m, n)$ to the abstract analogue of $\sigma$.
By \Cref{lem:comp} (and the fact that $\c A'(m,n)$ satisfies $0 \plusdot 0 = 0$) we see that $\c A_\sigma(m, n)$ is $\lesssim'$-completely $\sigma$-representable if and only if $\c A'(m, n)$ is $\lesssim'$-completely $(\plusdot, \minusdot, \emptyset)$-representable, which is true if and only if $\c A(m, n)$ is $\lesssim$-completely $(\plusdot, \emptyset)$-representable. By \Cref{lem:mn} this is the case precisely when $|m|=|n|$. So $\c A_\sigma(m, m+1)$ is not $\sigma$-representable for $2<m<\omega$.

As before, write $\nu$ for $\omega \setminus \set{0,1,2}$ and let $U$ be any non-principle ultrafilter over $\nu$. We will argue that $\Pi_{m\in\nu}\c A_\sigma(m, m+1)/U$ is $\sigma$-representable. From $\Pi_{m\in\nu}\c A_\sigma(m, m+1)/U$, form the partial algebra $\c B'$ by expanding to $(\plusdot, \negdot, 0)$ using \eqref{eq:abc} and defining 0 in the obvious way, if necessary. Then let $\c B$ be the $(\plusdot, 0)$-reduct of $\c B'$. We can easily see that, $\c B'$ is complemented and in particular it validates \eqref{eq:abc}. Hence $\Pi_{m\in\nu}\c A_\sigma(m, m+1)/U$ is $\sigma$-representable if and only if $\c B$ is $(\plusdot, 0)$-representable. It is easy to check that $\c B = \Pi_{m\in\nu}\c A(m, m+1)/U$, which we know, by the proof of \Cref{thm:nfa}, is $(\plusdot, 0)$-representable. Hence the ultraproduct $\Pi_{m\in\nu}\c A_\sigma(m, m+1)/U$ of non-$\sigma$-representable partial algebras is itself $\sigma$-representable and so the class of $\sigma$-representable partial algebras is not finitely axiomatisable. 

For the second half of the theorem, we know, from the proof of \Cref{thm:non elem}, that $\c A(\omega_1, \omega)\equiv\c A(\omega, \omega)$, where $\equiv$ denotes elementary equivalence. Hence $\c A'(\omega_1, \omega)\equiv\c A'(\omega, \omega)$ since both expansions use the same first-order definition of $\negdot$. The elementary equivalence  of the reducts $\c A_\sigma(\omega_1, \omega)$ and $\c A_\sigma(\omega, \omega)$ follows. We established earlier in this proof that $\c A_\sigma(\omega_1, \omega)$ is not $\lesssim'$-completely $\sigma$-representable, while $A_\sigma(\omega, \omega)$ is. Hence the class of $\lesssim'$-completely $\sigma$-representable partial algebras is not closed under elementary equivalence.
\end{proof}

\begin{corollary}
Suppose $\minusdot$ is included in $\sigma$ and all symbols in $\sigma$ are from $\set{\dcupdot, \minusdot, \mid, \emptyset}$. The class of partial algebras $\sigma$-representable by partial functions is not finitely axiomatisable.  The class of partial algebras $\lesssim'$-completely $\sigma$-representable by partial functions is not closed under elementary equivalence.
\end{corollary}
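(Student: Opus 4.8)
The plan is to transfer the two set-based conclusions of \Cref{minus-theorem} to partial functions, exactly as \Cref{cor:S} and \Cref{elementary-corollary} transferred \Cref{thm:nfa} and \Cref{thm:non elem}. I would treat first the signatures $\sigma$ containing $\emptyset$ but not $\mid$, then those containing both $\emptyset$ and $\mid$, and reduce the remaining $\emptyset$-free signatures to these at the end.

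Suppose then that $\sigma\subseteq\set{\dcupdot, \minusdot, \emptyset}$ with $\minusdot, \emptyset\in\sigma$. By \Cref{prop}\eqref{first}---which, as observed in the proof of \Cref{elementary-corollary}, extends to $\lesssim'$-complete representability, since the passage between $a^\theta$ and $\set{(x,x)\mid x\in a^\theta}$ respects both $\lesssim'$ and disjointness---the class of partial algebras $\sigma$-representable by partial functions coincides with the class representable by sets over the signature obtained by replacing $\dcupdot$ with $\cupdot$. Both conclusions of \Cref{minus-theorem} hold for this set class, and so transfer at once.

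Now suppose $\mid\in\sigma$ (still with $\emptyset\in\sigma$), so that \Cref{prop}\eqref{first} no longer applies. Here I would imitate the composition arguments of \Cref{cor:S} and \Cref{elementary-corollary}: for $m, n$ of cardinality greater than two, expand $\c A'(m, n)$, or its reduct appropriate to $\sigma$, to include a composition operation by setting $a\compo b=0$ throughout. The resulting algebra over $(m, m+1)$ is not $\sigma$-representable, since its $\mid$-free reduct already fails to be, by \Cref{minus-theorem} and \Cref{prop}\eqref{first}. Writing $\nu=\omega\setminus\set{0,1,2}$ and fixing a non-principal ultrafilter $U$ over $\nu$, the corresponding ultraproduct has, by the proof of \Cref{minus-theorem} together with \Cref{prop}\eqref{first}, a reduct that is $(\dcupdot, \minusdot, \emptyset)$-representable, and by \Los's theorem it validates $a\compo b=0$. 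The decisive point is that the construction of \Cref{prop}\eqref{mid}, namely $a^\rho=f\restr{a^\theta}$ for a bijection $f:X\to Y$ onto a disjoint copy of the base, represents $\minusdot$ as well as $\dcupdot$---restriction of a fixed injection preserves both the subset relation and disjointness---while the composite of any two such partial functions is empty because $X\cap Y=\emptyset$. Thus this construction upgrades a $(\dcupdot, \minusdot, \emptyset)$-representation of a reduct validating $a\compo b=0$ to a full $\sigma$-representation, and it plainly respects $\lesssim'$-suprema. We therefore obtain a representable ultraproduct of non-representable algebras, yielding non-finite-axiomatisability; and applying the same expansion to the elementarily equivalent pair $\c A'(\omega_1, \omega)\equiv\c A'(\omega, \omega)$ furnished by \Cref{minus-theorem} gives the failure of closure under elementary equivalence for $\lesssim'$-complete representability.

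Finally, each $\emptyset$-free signature is reduced to its $\emptyset$-containing counterpart exactly as in \Cref{cor:S} and \Cref{elementary-corollary}: by \Cref{add0}, a finite axiomatisation of the $\emptyset$-free representation class would yield one of the class with $\emptyset$ by adjoining the single axiom $K(0,0,0)$, and the analogous reduction transfers the elementary-equivalence statement. I expect the only point needing real care to be the verification that $a^\rho=f\restr{a^\theta}$ correctly and simultaneously represents subset complement, but this is immediate from the fact that restriction to a fixed domain is monotone and disjointness-preserving.
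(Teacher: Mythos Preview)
Your proposal is correct and follows essentially the same approach as the paper: invoke \Cref{prop}\eqref{first} (extended to $\lesssim'$-completeness) for the $\mid$-free signatures, use the $a\compo b=0$ expansion together with the construction of \Cref{prop}\eqref{mid} for signatures containing both $\mid$ and $\emptyset$, and reduce the remaining $\emptyset$-free cases via \Cref{add0}. The only cosmetic difference is that the paper treats the $\mid$-free, $\emptyset$-free signatures directly via \Cref{prop}\eqref{first} rather than via the reduction of \Cref{add0}, and your explicit check that $a^\rho=f\restr{a^\theta}$ represents $\minusdot$ correctly is a point the paper leaves implicit.
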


\begin{proof}
\Cref{prop}\eqref{first} tells us that when all symbols are from $\set{\dcupdot, \minusdot, \emptyset}$, representability by partial functions is the same as representability by sets (with $\dcupdot$ in place of $\cupdot$). The proof of \Cref{prop}\eqref{first} extends to equality of $\lesssim'$-complete representability by partial functions and by sets. Hence for these signatures the results are immediate corollaries of \Cref{minus-theorem}.

For  signatures $\sigma$ including both $\mid$ and $\emptyset$ we use the same methods as in the proofs of \Cref{cor:S} and \Cref{elementary-corollary}. Let $\sigma^-$ be the signature formed by removing $\mid$ from $\sigma$. First we expand the partial algebras $\c A_{\sigma^-}(m, m+1)$ described in the proof of \Cref{minus-theorem} to a signature including $;$ by defining $a \compo b = 0$ for all $a, b$. The expanded partial algebras are not representable since the $\c A_{\sigma^-}(m, m+1)$'s are not. The ultraproduct of the expanded partial algebras validates $a \compo b = 0$, by \Los's theorem and so is representable, by the same method as in the proof of \Cref{prop}\eqref{mid}. This refutes finite axiomatisability. For $\lesssim'$-complete representability, again define $\compo$ by $a \compo b = 0$, to expand both of the two elementarily equivalent partial algebras $\c A_{\sigma^-}(\omega_1, \omega)$ and $\c A_{\sigma^-}(\omega, \omega)$. The expansions $\c B_1$ and $\c B_0$ remain elementarily equivalent and  the first is not $\lesssim'$-completely representable whilst the second is, by the same method as in the proof of \Cref{prop}\eqref{mid}.

The remaining cases are signatures including $\mid$ but not $\emptyset$, that is $(\minusdot, \mid)$ and $(\dcupdot, \minusdot, \mid)$. For these the results follow from the corresponding signatures that include $\emptyset$, by the now-familiar arguments involving \Cref{add0} and its generalisation to $\lesssim'$-complete representability.
\end{proof}

\section{Signatures Including Intersection}\label{meet}

In this section we consider signatures including a total operation $\bmeet$ to be represented as intersection.  In contrast to the results of the previous section, the classes of partial algebras representable by sets are finitely axiomatisable. This is true for all signatures containing intersection and with other operations members of $\set{\cupdot, \minusdot, \emptyset}$.  In order to control the size of this paper we do not consider representability by partial functions, only noting that the proofs in this section are not immediately adaptable to that setting.

We start with the signatures $(\cupdot, \cap, \emptyset)$ and $(\cupdot, \cap)$. Consider the following finite set {\bf Ax$(J, \bmeet,0)$} of $\lang L(J, \bmeet, 0)$ axioms.
\begin{description}
\item [$\plusdot$ is single valued] $J(a, b, c)\wedge J(a, b, c') \impl c=c'$
\item [$\plusdot$ is commutative] $J(a, b, c)\rightarrow J(b, a, c)$
\item[$\bmeet$-semilattice]  $\bmeet$ is commutative, associative and idempotent
\item[$\bmeet$ distributes over $\plusdot$] $J(b, c, d) \impl J(a\bmeet b, a\bmeet c, a\bmeet d)$
\item[$0$ is identity for $\plusdot$]  $J(a, 0, a)$ 
\item[domain of $\plusdot$]
$\exists c  J(a, b, c) \biimp a\bmeet b=0  $
\end{description}
Let {\bf Ax$(J, \bmeet)$}  be obtained from {\bf Ax$(J, \bmeet, 0)$} by replacing the axioms concerning $0$ 
 (the `$0$ is identity for $\plusdot$' and `domain of $\plusdot$' axioms) by the following axiom stating that either there exists an  element $z$ that acts like $0$, or else the partial operation $\plusdot$ is nowhere defined.
\begin{equation}\label{eq:z}
\begin{array}{l}
\exists z(\forall a J(a, z, a)\wedge \forall a,b  (a\bmeet b=z\biimp\exists c J(a, b, c))\\
\vee\\
 \forall a, b, c\; \neg J(a, b, c)
\end{array}
\end{equation}

\begin{theorem}\label{intersection-theorem}
The class of $(J, \bmeet, 0)$-structures that are $(\cupdot, \cap, \emptyset)$-representable  by sets  is axiomatised by \Ax{J, \bmeet, 0}.  The class of $(J, \bmeet)$-structures that are $(\cupdot, \cap)$-representable by sets is axiomatised by \Ax{J, \bmeet}.
\end{theorem}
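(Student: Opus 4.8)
The plan is to prove both directions. Soundness---that every $(\cupdot, \cap, \emptyset)$-representable structure satisfies \Ax{J, \bmeet, 0}---is a routine check that each axiom holds for genuine sets under $\cupdot$, $\cap$ and $\emptyset$; the only mildly interesting clauses are the `domain of $\plusdot$' axiom, which holds because $S \cupdot T$ is defined exactly when $S \cap T = \emptyset$, and distributivity, which holds because $\cap$ distributes over disjoint unions. I would dispatch this first. For completeness the key observation is that, since $\bmeet$ is to become $\cap$, the natural `points' of a representation are \emph{filters} of the meet-semilattice $(A, \bmeet)$: if I set $a^\theta = \set{F \in B : a \in F}$ for a family $B$ of proper filters (a filter being \emph{proper} when it omits $0$), then because filters are closed under $\bmeet$ and upward closed, the identities $(a \bmeet b)^\theta = a^\theta \cap b^\theta$ and $0^\theta = \emptyset$ come for free. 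So the whole problem reduces to producing enough filters to witness conditions \ref{neq} and \ref{defined} of \Cref{lem:sets}, and checking these filters are $\plusdot$-prime, bi-closed and pairwise incombinable.

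Before the main construction I would record three consequences of the axioms. First, combining `$0$ is identity' with `domain of $\plusdot$' gives $a \bmeet 0 = 0$ for every $a$, so $0$ is the least element of the semilattice order ($x \leq y \iff x \bmeet y = x$). Second, whenever $a \plusdot b$ is defined, distributing $J(a, b, a \plusdot b)$ by $a$ yields $J(a, 0, a \bmeet (a \plusdot b))$, and comparing with $J(a, 0, a)$ through single-valuedness gives $a \leq a \plusdot b$. Third, these two facts show that any proper filter is automatically bi-closed (upward closure plus $a \leq a \plusdot b$) and pairwise incombinable (two of its elements would have a nonzero meet, so their $\plusdot$ is undefined). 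Thus the only genuine content is a supply of \emph{$\plusdot$-prime} proper filters.

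The heart of the proof is therefore a prime-filter separation lemma: if $a \not\le b$ then there is a $\plusdot$-prime proper filter containing $a$ but not $b$. I would prove it by taking, via Zorn's lemma, a filter $F$ maximal among those containing $a$ and omitting $b$ (the principal filter $\uparrow a$ is such a filter), and then showing $F$ is $\plusdot$-prime. Suppose $c \plusdot e \in F$ but $c, e \notin F$. Maximality forces the filters generated by $F \cup \set c$ and by $F \cup \set e$ each to contain $b$, yielding (after meeting two witnesses) a single $f \in F$ with $f \bmeet c \leq b$ and $f \bmeet e \leq b$. Now distributing $J(c, e, c \plusdot e)$ by $f$ gives $J(f \bmeet c, f \bmeet e, f \bmeet (c \plusdot e))$, and distributing \emph{that} triple by $b$, together with $f \bmeet c, f \bmeet e \leq b$ and single-valuedness, forces $f \bmeet (c \plusdot e) \leq b$; since $f \bmeet (c \plusdot e) \in F$ this puts $b \in F$, a contradiction. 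This double use of the distributivity axiom is the step I expect to be the main obstacle---getting the bookkeeping of which element one distributes by exactly right. With the lemma in hand, \ref{neq} follows by applying it to $a \not\le b$ or $b \not\le a$, and \ref{defined} follows by applying it to $a \bmeet b \not\le 0$ (valid since $a \plusdot b$ undefined means $a \bmeet b \neq 0$) to obtain a prime filter containing $a \bmeet b$ and hence both $a$ and $b$. Taking $B$ to be the set of all $\plusdot$-prime proper filters and invoking \Cref{lem:sets} then completes the representation.

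For the second statement I would treat \Ax{J, \bmeet} by cases on its disjunctive axiom \eqref{eq:z}. If some $z$ behaves like $0$, I expand the structure by interpreting $0$ as $z$; the $z$-clauses of \eqref{eq:z} are exactly the two $0$-axioms, so \Ax{J, \bmeet, 0} holds and the first part gives a representation, which is in particular a $(\cupdot, \cap)$-representation. If instead $\plusdot$ is nowhere defined, there is no disjointness to arrange, so I would use the principal down-set map $a \mapsto \set{b \in A : b \leq a}$, which is a faithful $\cap$-representation of the semilattice, and then adjoin a single common point to every image; this keeps $\bmeet$ represented as $\cap$ while guaranteeing that no two images are disjoint, so that $\cupdot$ is nowhere defined, matching $\plusdot$. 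Soundness of \eqref{eq:z} is the only extra point: given a $(\cupdot, \cap)$-representation, either $\plusdot$ is nowhere defined, or some $a \plusdot b$ is defined, in which case $z = a \bmeet b$ has empty image and is readily checked to satisfy the $z$-clauses.
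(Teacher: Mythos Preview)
Your proposal is correct and follows essentially the same route as the paper: soundness by direct verification, completeness via a Birkhoff-style prime-filter representation, with the key step being the separation lemma (a maximal filter containing $a$ and omitting $b$ is $\plusdot$-prime, proved via two applications of distributivity and single-valuedness, exactly as you outline). Your framing through \Cref{lem:sets} is a harmless repackaging of the same argument.

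One small redundancy: in the second case (when $\plusdot$ is nowhere defined) you adjoin a fresh common point to every down-set to force non-disjointness, but this is unnecessary---since $a \bmeet b$ lies in both $\set{x : x \le a}$ and $\set{x : x \le b}$, the principal down-sets already pairwise intersect, so the plain down-set map is already a $(\cupdot, \cap)$-representation. The paper uses exactly this observation. Your adjoined point does no harm (adding a fixed element to every image preserves intersections), so the argument remains correct, just slightly longer than needed.
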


\begin{proof}
We first give a quick justification for the axioms being sound in both cases. It suffices to argue that the axioms are sound for disjoint-union partial algebras of sets, with or without zero respectively.

Let $\c A$ be a disjoint-union partial algebra of sets with zero. We attend to each axiom of \Ax{J, \bmeet, 0} in turn.
\begin{description}
\item [$\plusdot$ is single valued] if $J(a, b, c)$ and $J(a, b, c')$ hold then $a \cupdot b$ is defined and is equal to both $c$ and $c'$. Hence $c = c'$.
\item [$\plusdot$ is commutative] $J(a,b,c)$ holds if and only if $a \cap b = \emptyset$ and $a \cup b = c$. By commutativity of intersection and union this is equivalent to the conjunction $b \cap a = \emptyset$ and $b \cup a = c$, which holds if and only if $J(b, a, c)$ holds.
\item[$\bmeet$-semilattice]  the easily verifiable facts that intersection is commutative, associative and idempotent are well known.
\item[$\bmeet$ distributes over $\plusdot$] if $J(b, c, d)$ then $b \cap c = \emptyset$, so certainly $(a \cap b) \cap (a \cap c) = \emptyset$. The other condition necessary for $J(a\bmeet b, a\bmeet c, a\bmeet d)$ to hold is that $(a \cap b) \cup (a \cap c) = a \cap d$. The left-hand side equals $a \cap (b \cup c)$ and by our hypothesis $b \cup c = d$, so we are done.
\item[$0$ is identity for $\plusdot$]  for any set $a$ we have $a \cap \emptyset = \emptyset$ and $a \cup \emptyset = a$, which are the two conditions needed to establish $J(a, 0, a)$.
\item[domain of $\plusdot$]
for any sets $a$ and $b$ there exists a set $c$ such that $J(a, b, c)$ if and only if $a \cupdot b$ is defined, which is true if and only if $a \cap b = \emptyset$.
\end{description}

Now let $\c A$ be a disjoint-union partial algebra of sets \emph{without} zero. It is clear that for all the axioms not concerning $0$ the above soundness arguments still hold. To see that axiom \eqref{eq:z} holds, note that if $\emptyset \in \c A$ then $\emptyset$ is an element $z$ that acts like $0$, as the first clause of \eqref{eq:z} asks for. Alternatively, if $\emptyset \notin \c A$, then for any sets $a$ and $b$ the intersection $a \cap b$, which is an element of $\c A$, must be nonempty. Hence $a \cupdot b$ is undefined and so for any $c$ we have $\neg J(a, b, c)$, meaning the second clause of \eqref{eq:z} holds.

\medskip

The sufficiency of the axioms is proved for $(J, \bmeet, 0)$-structures by a modification of the proof of Birkhoff's representation theorem for distributive lattices.
Assume that \Ax{J, \bmeet, 0} is valid on a $(J, \bmeet, 0)$-structure $\c A$. By the `$\plusdot$ is single valued' axiom we can view $\c A$ as a partial $(\plusdot, \bmeet, 0)$-algebra.  A \defn{filter} $F$ is a nonempty subset of $\c A$ such that $a\bmeet b\in F\iff( a\in F \mbox{ and } b\in F)$.   For any nonempty subset $S$ of $\c A$ let $\lr{S}$ be the filter generated by $S$, that is, $\set{a\in\c A\mid \exists s_1, s_2, \ldots, s_n\in S\; \mbox{(some finite $n$)},\; a\geq s_1\bmeet s_2\bmeet\ldots\bmeet s_n}$,  where $\leq$ is the partial ordering given by the $\bmeet$-semilattice.\footnote{There might be no `filter generated by the empty set', that is, no smallest filter, as the intersection of two or more filters can be empty.}  A filter is \defn{proper} if it is a proper subset of $\c A$. Recall that a set $F$ is $\plusdot$-prime if  $a\plusdot b\in F$ implies either $a\in F$ or $b\in F$.

Let $\Phi$ be the set of all  proper $\plusdot$-prime filters of $\c A$.  Define a  map $\theta$ from $\c A$ to $\powerset(\Phi)$ by letting $a^\theta=\set{F\in\Phi\mid a\in F}$. We will show that $\theta$ is a representation of $\c A$. 

The requirement that $(a\bmeet b)^\theta=a^\theta\cap b^\theta$ follows directly from the filter condition $a\bmeet b\in F\iff( a\in F \mbox{ and } b\in F)$. It follows easily from the axioms concerning $0$ that $0$ is the minimal element with respect to $\leq$. Hence a filter is proper if and only if it does not contain $0$. Then the requirement that $0^\theta = \emptyset$ follows directly from the condition that the filters in $\Phi$ be proper.

We next show that $\theta$ is faithful.  For this we show that if $a\not\leq b$ then there is a proper $\plusdot$-prime filter $F$ such that $a\in F$ but $b\not\in F$. The filters containing $a$ but not $b$, ordered by inclusion, form an inductive poset, that is, a poset in which every chain has an upper bound. (The empty chain has an upper bound since the up-set of $a$ is an example of a filter containing $a$ but not $b$.) Hence, by Zorn's lemma, there exists a maximal such filter, $F$ say.  We claim that $F$ is proper and $\plusdot$-prime.

Suppose, for contradiction, that $c \plusdot d$ is defined and belongs to $F$ but that neither $c \in F$ nor $d \in F$. By maximality of $F$ we have $b\in \lr{F\cup\set c}$ and $b\in \lr{F\cup\set d}$.  Then there is an $f\in F$ such that $f\bmeet c\leq b$ and $f\bmeet d\leq b$.  Then by the definition of $\leq$ and the distributive axiom, $b \bmeet f \bmeet (c \plusdot d) = (b \bmeet f \bmeet c) \plusdot (b \bmeet f \bmeet d) = (f \bmeet c) \plusdot (f \bmeet d) = f \bmeet (c \plusdot d)$. Hence $b \geq f \bmeet (c \plusdot d)$ and since both $f$ and $c \plusdot d$ are in $F$ we get that $b$ should be too---a contradiction.  Thus either $c \in F$ or $d \in F$. We conclude that $F$ satisfies the $\plusdot$-prime condition. Clearly $F$ is proper, as $b \not\in F$. Hence $F$ is a proper and $\plusdot$-prime filter and so $\theta$ is faithful.

To complete the proof that $\theta$ is a representation we show that $\plusdot$ is correctly represented as $\cupdot$. That is, $a\plusdot b$ is defined if and only if $a^\theta \cupdot b^\theta$ is defined, and when they are defined $(a\plusdot b)^\theta=a^\theta\cupdot b^\theta$. We have that
\begin{align*}
\mbox{$a \plusdot b$ is defined}& \iff  a \bmeet b = 0 && \mbox{by the domain of $\plusdot$ axiom}\\
&\iff  (a \bmeet b)^\theta = 0^\theta && \mbox{as $\theta$ is faithful}\\
&\iff  a^\theta \cap b^\theta = \emptyset && \mbox{as }0\mbox{ and}\bmeet\mbox{are represented correctly}\\
&\iff  a^\theta \cupdot b^\theta \mbox{ is defined} && \mbox{by the definition of $\cupdot$.}
\end{align*}
Further, when both $a\plusdot b$ and $a^\theta \cupdot b^\theta$ are defined it follows easily from \Ax{J, \bmeet, \emptyset} that $a=a\bmeet(a\plusdot b)$. So if $a$ is in a filter then  by the filter condition $a\plusdot b$ is too. Hence $a^\theta \subseteq (a \plusdot b)^\theta$, and similarly $b^\theta \subseteq (a \plusdot b)^\theta$, giving us $a^\theta \cupdot b^\theta \subseteq (a \plusdot b)^\theta$. By the $\plusdot$-prime condition on filters we get the reverse inclusion $(a\plusdot b)^\theta \subseteq a^\theta\cupdot b^\theta$. Hence $(a\plusdot b)^\theta=a^\theta\cupdot b^\theta$.

For a $(J, \bmeet)$-structure $\c A$, if \Ax{J, \bmeet} is valid in $\c A$ then \eqref{eq:z} holds.  If the first alternative of \eqref{eq:z} holds then we may form an expansion of $\c A$ to a $(J, \bmeet, 0)$-structure, interpreting $0$ as the $z$ given by this clause. Then by the above proof for $(J, \bmeet, 0)$-structures we can find a  $(\cupdot,\cap, \emptyset)$-representation of the expansion.  By ignoring the constant $0$ we obtain a $(\cupdot, \cap)$-representation of $\c A$. 

Otherwise, the second alternative in \eqref{eq:z} is true and $J(a, b, c)$ never holds, so we may define a representation $\theta$ of $\c A$ by letting $a^\theta=\set{b\in\c A\mid b\leq a}$.  Clearly $\theta$ represents $\bmeet$ as $\cap$ correctly, by the $\bmeet$-semilattice axioms.  Since $a\bmeet b\in a^\theta\cap b^\theta$ for any $a, b\in\c A$ and $a \plusdot b$ is never defined, $\theta$ also represents $\plusdot$ as $\cupdot$ correctly.
\end{proof}

From the previous theorem we can easily obtain finite axiomatisations for the signatures $(\cupdot, \minusdot, \cap, \emptyset)$ and $(\cupdot, \minusdot, \cap)$. Recall that we use the ternary relation $K$ to make first-order statements about the partial binary operation $\negdot$.

\begin{corollary}
The class of $(J, K, \bmeet, 0)$-structures that are $(\cupdot, \minusdot, \cap, \emptyset)$-representable by sets  is finitely axiomatisable.  The class of $(J, K, \bmeet)$-structures that are $(\cupdot, \minusdot, \cap)$-representable by sets is finitely axiomatisable.
\end{corollary}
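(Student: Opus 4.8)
The plan is to reduce directly to \Cref{intersection-theorem} by exploiting the fact that, over sets, the operation $\minusdot$ is interdefinable with $\cupdot$. Recall the observation made just after the definition of these operations in \Cref{definitions}: $S \cupdot T = U$ if and only if $U \minusdot S = T$. Transcribed into the relational language, this says that every $(\cupdot, \minusdot, \cap, \emptyset)$-algebra of sets satisfies
\[ \forall a, b, c\; \big( K(a, b, c) \leftrightarrow J(b, c, a) \big). \]
I would therefore take as my axiomatisation the finite set \Ax{J, \bmeet, 0} together with this one extra sentence (and, for the zero-free case, \Ax{J, \bmeet} together with the same sentence). Both resulting theories are finite, since \Ax{J, \bmeet, 0} and \Ax{J, \bmeet} are.

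For soundness I would argue that if $\c A$ is $(\cupdot, \minusdot, \cap, \emptyset)$-representable via some $\theta$, then its $(J, \bmeet, 0)$-reduct is $(\cupdot, \cap, \emptyset)$-representable, so \Ax{J, \bmeet, 0} holds by the soundness half of \Cref{intersection-theorem}; and the biconditional holds because $K(a,b,c)$ is equivalent to $a^\theta \minusdot b^\theta = c^\theta$, hence by the displayed set-theoretic fact to $b^\theta \cupdot c^\theta = a^\theta$, i.e.\ to $J(b, c, a)$.

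The crux is the completeness direction. Here I would start from a structure $\c A$ satisfying the axioms, apply \Cref{intersection-theorem} to obtain a $(\cupdot, \cap, \emptyset)$-representation $\theta$ of the $(J, \bmeet, 0)$-reduct, and then verify that this very same $\theta$ already represents $K$ as $\minusdot$. The verification simply chains the biconditional with the correctness of $\theta$ on $J$ and with the set-theoretic fact: $K(a,b,c)$ holds iff $J(b,c,a)$ holds iff $b^\theta \cupdot c^\theta = a^\theta$ iff $a^\theta \minusdot b^\theta = c^\theta$. No further axiom pinning down $K$ is needed; in particular the single-valuedness of $K$ falls out as a consequence of $\theta$ being a faithful representation. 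The zero-free case would be handled identically, using \Ax{J, \bmeet} in place of \Ax{J, \bmeet, 0}.

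I do not anticipate a serious obstacle, as the redundancy of $K$ in the presence of $J$ makes this essentially a one-line reduction to the previous theorem. The only subtlety worth stating carefully is that a single representation, produced purely for the $\plusdot$-reduct, must simultaneously be correct for $\negdot$; this is exactly what the interdefinability of $\cupdot$ and $\minusdot$ over sets guarantees, so no new combinatorial or model-theoretic work is required.
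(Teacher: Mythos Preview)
Your reduction almost works, but the axiomatisation you propose is too weak: the single biconditional $K(a,b,c)\leftrightarrow J(b,c,a)$ does not by itself force the image of the representation to be \emph{closed} under $\minusdot$. Your chain of equivalences does correctly establish, for every triple $a,b,c\in\c A$, that $K(a,b,c)$ holds iff $a^\theta\minusdot b^\theta$ is defined and equals $c^\theta$. But for $\theta$ to be a $(\cupdot,\minusdot,\cap,\emptyset)$-representation its image must be a partial $(\cupdot,\minusdot,\cap,\emptyset)$-algebra of sets, and that requires: whenever $b^\theta\subseteq a^\theta$, the set $a^\theta\setminus b^\theta$ lies in the image. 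Nothing in \Ax{J,\bmeet,0} together with your biconditional guarantees this.

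A concrete counterexample: take the three-element chain $0<b<a$ as a $\bmeet$-semilattice, let $J$ hold only for the instances $J(0,x,x)$ and $J(x,0,x)$, and define $K$ via your biconditional. This structure satisfies all your axioms. It has a $(\cupdot,\cap,\emptyset)$-representation with $b^\theta\subsetneq a^\theta$, yet $a\negdot b$ is undefined (there is no $c$ with $J(b,c,a)$), so the structure is not $(\cupdot,\minusdot,\cap,\emptyset)$-representable. The paper repairs this by also adding the axiom $a\bmeet b=b\rightarrow\exists c\,K(a,b,c)$; since $\theta$ represents $\bmeet$ as intersection and is faithful, $b^\theta\subseteq a^\theta$ is equivalent to $a\bmeet b=b$, and this extra axiom then supplies the needed $c$ with $c^\theta=a^\theta\setminus b^\theta$, closing the image under $\minusdot$.
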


\begin{proof}
To {\bf Ax$(J, \bmeet, 0)$} and {\bf Ax$(J, \bmeet)$} add the formulas $a \bmeet b = b \rightarrow \exists c K(a, b, c)$ and the relational form of \eqref{eq:abc} (that is, $K(a, b, c) \leftrightarrow J(b, c, a)$), which are valid on the representable partial algebras. Then when these axiomatisations hold, the representations in the proof of \Cref{intersection-theorem} will correctly represent $\negdot$ as $\minusdot$.
\end{proof}

We claimed finite representability for all signatures containing intersection and with other operations coming from $\set{\cupdot, \minusdot, \emptyset}$. For the signatures $(\cap)$ and $(\cap, \emptyset)$ finite axiomatisability is easy and well known. So the signatures remaining to be examined are $(\minusdot, \cap, \emptyset)$ and $(\minusdot, \cap)$.\footnote{As an aside, note these are  signatures for which representability by sets and by partial functions are easily seen to be the same thing.} Our treatment is very similar to the cases $(\cupdot, \cap, \emptyset)$ and $(\cupdot, \cap)$---no new ideas are needed---but we provide the details anyway. Consider the following finite set {\bf Ax$(K, \bmeet,0)$} of $\lang L(K, \bmeet, 0)$ axioms.
\begin{description}
\item [$\negdot$ is single valued] $K(a, b, c) \wedge K(a, b, c') \impl c=c'$
\item [$\negdot$ is left injective] $K(a, b, c) \wedge  K(a', b, c) \impl a = a'$
\item [$\negdot$ is subtractive] $K(a, b, c) \biimp K(a, c, b)$
\item[$\bmeet$-semilattice]  $\bmeet$ is commutative, associative and idempotent
\item[$\bmeet$ distributes over $\negdot$] $K(b, c, d) \impl K(a\bmeet b, a\bmeet c, a\bmeet d)$
\item[$0$ is identity for $\negdot$]  $K(a, 0, a)$ 
\item[domain of $\negdot$]
$\exists c  K(a, b, c) \biimp a\bmeet b=b$
\end{description}
Let {\bf Ax$(K, \bmeet)$}  be obtained from {\bf Ax$(K, \bmeet, 0)$} by replacing the `$0$ is identity for $\negdot$'  axiom by the  axiom \begin{equation}\label{z}\neg \exists a \vee \exists z \forall a K(a, z, a)\end{equation} stating that, provided $\c A$ is nonempty, there exists an element $z$ that acts like $0$.

\begin{theorem}\label{same}
The class of $(K, \bmeet, 0)$-structures that are $(\minusdot, \cap, \emptyset)$-representable  by sets is axiomatised by \Ax{K, \bmeet, 0}.  The class of $(K, \bmeet)$-structures that are $(\minusdot, \cap)$-representable by sets is axiomatised by \Ax{K, \bmeet}.
\end{theorem}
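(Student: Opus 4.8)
The plan is to mirror the Birkhoff-style argument used for \Cref{intersection-theorem}, replacing $\plusdot$ by $\negdot$ throughout and isolating the filter condition that forces $\negdot$ to be represented as set difference. First I would check soundness. For \Ax{K, \bmeet, 0} each axiom is routine on a subset-complement partial algebra of sets: single-valuedness is clear; left-injectivity holds because from $b \subseteq a$ and $a \setminus b = c$ one recovers $a = c \cup b$; subtractivity is exactly the observation $S \cupdot T = U \iff U \minusdot S = T$; the semilattice laws and the distributivity of $\cap$ over $\setminus$ are elementary; and the identity/domain clauses are immediate. For \Ax{K, \bmeet} I would note that a \emph{nonempty} subset-complement algebra of sets automatically contains $\emptyset$ (take $A \minusdot A$ for any member $A$), which is precisely what \eqref{z} asserts, so \eqref{z} is sound.

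Next I would set up the algebraic facts that drive the representation. Working in a model $\c A$ of \Ax{K, \bmeet, 0}, I view it as a partial $(\negdot, \bmeet, 0)$-algebra ordered by $a \le b \iff a \bmeet b = a$; the identity and domain axioms make $0$ the least element, and from $K(a, 0, a)$ together with subtractivity I get $a \negdot a = 0$ for all $a$. Two consequences, both obtained from the distributivity axiom, will do the real work: (a) whenever $K(a, b, c)$ holds one has $b \bmeet c = 0$ (distribute by $b$ to get $K(b, b, b \bmeet c)$, then compare with $K(b, b, 0)$ by single-valuedness); and (b) if $K(x, y, z)$ with $y \le b$ and $z \le b$, then $x \le b$ (distribute by $b$ to obtain $K(b \bmeet x, y, z)$, then compare with $K(x, y, z)$ using \emph{left-injectivity} to force $b \bmeet x = x$).

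For the representation I define filters as in the proof of \Cref{intersection-theorem}, call a proper filter $F$ \emph{prime} if $K(a, b, c)$ and $a \in F$ imply $b \in F$ or $c \in F$ (the $\negdot$-analogue of $\plusdot$-prime: a point of the disjoint union $a$ of $b$ and $c$ lies in $b$ or in $c$), and set $\Phi$ to be the collection of proper prime filters, with $a^\theta = \set{F \in \Phi \mid a \in F}$. Then $\bmeet$ is represented as $\cap$ by the filter condition, $0$ as $\emptyset$ by properness, and definedness matches because $b \le a \iff b^\theta \subseteq a^\theta$. For the difference, the inclusion $(a \negdot b)^\theta \subseteq a^\theta \setminus b^\theta$ uses up-closure together with fact (a) and properness, while the reverse inclusion applies primeness to the instance $K(a, b, a \negdot b)$.

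Faithfulness, and with it the main obstacle, is the step where the argument genuinely departs from the $\cupdot$ case. For $a \not\le b$ a Zorn's lemma argument produces a filter $F$ maximal among those containing $a$ and avoiding $b$ (the up-set of $a$ witnesses that this poset is nonempty and inductive), and the crux is showing $F$ is prime. Supposing $K(c, d, e)$ with $c \in F$ but $d, e \notin F$, maximality yields $f \in F$ with $f \bmeet d \le b$ and $f \bmeet e \le b$; distributing $K(c, d, e)$ by $f$ gives $K(f \bmeet c, f \bmeet d, f \bmeet e)$, and now fact (b) forces $f \bmeet c \le b$, whence $b \in F$, a contradiction. Here fact (b)—and thus the left-injectivity axiom—plays the role that commutativity played in the disjoint-union proof, and verifying that it suffices is where I expect the care to be needed. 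Finally, for \Ax{K, \bmeet} without zero: the empty algebra is trivially representable, and if $\c A$ is nonempty then \eqref{z} supplies an element $z$ which the domain axiom forces to be least and which satisfies $K(a, z, a)$; interpreting $0$ as $z$ expands $\c A$ to a model of \Ax{K, \bmeet, 0}, and discarding $0$ from the resulting $(\minusdot, \cap, \emptyset)$-representation yields the desired $(\minusdot, \cap)$-representation.
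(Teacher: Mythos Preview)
Your proposal is correct and follows essentially the same approach as the paper: soundness checks, then a Birkhoff-style representation on proper $\negdot$-prime filters, with faithfulness via a Zorn's-lemma maximal filter whose primeness is forced by distributivity together with left-injectivity. Your isolation of facts (a) and (b) is a slightly cleaner packaging of exactly the computation the paper carries out inline (the paper's chain $b \bmeet f \bmeet c \negdot f \bmeet d = \cdots = f \bmeet c \negdot f \bmeet d$ followed by left-injectivity is precisely your fact (b) applied to $K(f\bmeet c, f\bmeet d, f\bmeet e)$), and your fact (a) is what the paper dismisses with ``it is easy to show that $(a \negdot b)\bmeet b = 0$''.
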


\begin{proof}
Again we give a quick justification for the soundness of the axioms. It suffices to argue that the axioms are sound for  partial $(\minusdot, \cap, \emptyset)$-algebras of sets and for partial $(\minusdot, \cap)$-algebras of sets   respectively.

Let $\c A$ be a partial $(\minusdot, \cap, \emptyset)$-algebra of sets. We attend to each axiom of \Ax{K, \bmeet, 0} in turn.
\begin{description}
\item [$\negdot$ is single valued] if $K(a, b, c)$ and $K(a, b, c')$ hold then $a \minusdot b$ is defined and is equal to both $c$ and $c'$. Hence $c = c'$.
\item [$\negdot$ is left injective] re-write the axiom with the predicate $J$, using \eqref{eq:abc}, then it becomes `$\plusdot$ is single valued', which we verified in \Cref{intersection-theorem}.
\item [$\negdot$ is subtractive] re-write  with $J$, then it becomes `$\plusdot$ is commutative'.
\item[$\bmeet$-semilattice] as in proof of \Cref{intersection-theorem}.
\item[$\bmeet$ distributes over $\negdot$] re-write  with $J$, then it becomes `$\bmeet$ distributes over $\plusdot$'.
\item[$0$ is identity for $\negdot$]  clear.
\item[domain of $\negdot$]
for any sets $a$ and $b$ there exists a set $c$ such that $K(a, b, c)$ if and only if $a \minusdot b$ is defined, which is true if and only if $b \subseteq a$, true if and only if $a \cap b = b$.
\end{description}

Now let $\c A$ be a partial $(\minusdot, \cap)$-algebra. It is clear that for all the axioms not concerning $0$ the above soundness arguments still hold. To see that axiom  \eqref{z} holds, note that either the clause $\neg \exists a$ holds or we can take any $a \in \c A$ and find that $a \minusdot a$ is defined and hence its value, $\emptyset$, is a member of $\c A$ and witnesses the existence of a $z$ such that $\forall a K(a, z, a)$.

\medskip

To prove the sufficiency of the axioms for $(K, \bmeet, 0)$-structures we use the same method employed in the proof of \Cref{intersection-theorem}. The definitions of the ordering $\leq$, of filters and of proper filters remain the same. This time however, we define a filter to be \defn{$\negdot$-prime} if  $a \in F$ and  $\exists a\negdot b$ together imply either $b \in F$ or $a \negdot b \in F$.

Similarly to before, $\Phi$ is the set of all proper $\negdot$-prime filters of $\c A$ and our representation will be the map $\theta$ from $\c A$ to $\powerset(\Phi)$ defined by $a^\theta=\set{F\in\Phi\mid a\in F}$. That $\bmeet$ is correctly represented as intersection is again immediate from the (unchanged) definition of a filter. It follows from the `$0$ is identity for $\negdot$' and `domain of $\negdot$' axioms that once again a filter is proper if and only if it does not contain $0$. Hence $0$ is represented correctly as the empty set.

To show that $\theta$ is faithful, given $a \not\leq b$, as before, we can find a maximal filter $F$ containing $a$ but not $b$ and we show $F$ is proper and $\negdot$-prime.  

Suppose, for contradiction, that $c \in F$ and $c \negdot d$ is defined but that neither $d \in F$ nor $c \negdot d \in F$. By maximality of $F$ we have $b\in \lr{F\cup\set d}$ and $b\in \lr{F\cup\set {c \negdot d}}$.  So there is an $f\in F$ such that $f\bmeet d\leq b$ and $f\bmeet (c \negdot d)\leq b$.  Then by the definition of $\leq$ and the distributive axiom, $b \bmeet f \bmeet c \negdot f \bmeet d = b \bmeet f \bmeet c \negdot b \bmeet f \bmeet d = b \bmeet f \bmeet (c \negdot d) =  f \bmeet (c \negdot d)= f \bmeet c \negdot f \bmeet d$. Then by left-injectivity of $\negdot$ we obtain $b \bmeet f \bmeet c = f \bmeet c$, that is, $b \geq f \bmeet c$. Since both $f$ and $c$ are in $F$ we see that $b$ should be too---a contradiction.  Thus either $d \in F$ or $c \negdot d \in F$. We conclude that $F$ satisfies the $\negdot$-prime condition. Clearly $F$ is proper, as $b \not\in F$. Hence $F$ is a proper $\negdot$-prime filter and so $\theta$ is faithful.

Finally, we show that $\negdot$ is correctly represented as $\minusdot$. We have that
\begin{align*}
\mbox{$a \negdot b$ is defined}& \iff  a \bmeet b = b && \mbox{by the domain of $\negdot$ axiom}\\
&\iff  (a \bmeet b)^\theta = b^\theta && \mbox{as $\theta$ is faithful}\\
&\iff  b^\theta \subseteq a^\theta && \mbox{as $\bmeet$ is represented correctly}\\
&\iff  a^\theta \minusdot b^\theta \mbox{ is defined} && \mbox{by the definition of $\minusdot$.}
\end{align*}
Further, when both $a \negdot b$ and $a^\theta \minusdot b^\theta$ are defined it follows easily from \Ax{K, \bmeet, \emptyset} that $a \negdot b =a\bmeet(a \negdot b)$. So if $a \negdot b$ is in a filter then  by the filter condition $a$ is too. Hence $(a \negdot b)^\theta \subseteq a^\theta$.  Similarly, it is easy to show that $(a \negdot b) \bmeet b = 0$, so if $a \negdot b$ is in a proper filter then $b$ is not. Hence $(a \negdot b)^\theta \cap  b^\theta =  \emptyset$, giving us $(a \negdot b)^\theta \subseteq a^\theta \minusdot b^\theta$. By the $\negdot$-prime condition on filters we get the reverse inclusion $(a\negdot b)^\theta \supseteq a^\theta\minusdot b^\theta$. Hence $(a\negdot b)^\theta=a^\theta\minusdot b^\theta$.

For a $(K, \bmeet)$-structure $\c A$, if \Ax{K, \bmeet} is valid in $\c A$ then \eqref{z} holds.  If the first alternative of \eqref{z} holds then $\c A$ is empty and so the empty function forms a representation. Otherwise, the second alternative in \eqref{z} is true. Then we may form an expansion of $\c A$ to a $(K, \bmeet, 0)$-structure, interpreting $0$ as the $z$ given by this clause. Then by the above proof for $(K, \bmeet, 0)$-structures we can find a  $(\minusdot,\cap, \emptyset)$-representation of the expansion.  By ignoring the constant $0$ we obtain a $(\minusdot, \cap)$ representation of $\c A$. 
\end{proof}

\section{Decidability and Complexity}\label{Decidability}

We finish with a discussion of the decidability and complexity of problems of representability and validity. We also highlight some still-open questions.

\begin{theorem}
The problem of determining whether a finite partial $\plusdot$-algebra has a disjoint-union representation is in {\bf NP}.
\end{theorem}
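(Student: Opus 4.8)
The plan is, given a representable input, to produce a polynomial-size certificate—an explicit representation on a small base—and to verify it in deterministic polynomial time. The whole argument rests on the small-base property already supplied by \Cref{lem:game}.

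A finite partial algebra is in particular countable, so \Cref{lem:game} gives both directions I need: if $\c A$ is representable then $\exists$ has a winning strategy in $\Gamma_\omega$, and (again because $\c A$ is countable) such a winning strategy yields a disjoint-union representation on a base of size at most $2|\c A|^2$. Thus a finite $\c A$ is representable if and only if it has a representation $\theta$ whose base may be taken to be $X=\set{1, \ldots, 2|\c A|^2}$. This is the crucial observation: the base can be kept polynomially small.

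This immediately suggests the nondeterministic algorithm. Presenting $\c A$ by its domain together with the relation $J$, I would guess a function $\theta$ assigning to each $a\in\c A$ a subset $a^\theta\subseteq X$. Each subset is encoded in at most $2|\c A|^2$ bits and there are $|\c A|$ of them, so the certificate has size polynomial in $|\c A|$, and hence polynomial in the size of the input. It then remains to verify, in deterministic polynomial time, that $\theta$ is an isomorphism onto a disjoint-union partial algebra of sets. By \Cref{homo} and the definition of $\cupdot$, this reduces to checking:
\begin{enumerate}[label=(\roman*)]
\item for all $a\neq b$ in $\c A$, we have $a^\theta\neq b^\theta$;
\item for all $a, b\in\c A$, the value $a\plusdot b$ is defined if and only if $a^\theta\cap b^\theta=\emptyset$;
\item for all $a, b\in\c A$ with $a\plusdot b$ defined, $(a\plusdot b)^\theta=a^\theta\cup b^\theta$.
\end{enumerate}
Each condition ranges over the $O(|\c A|^2)$ pairs of elements, and for each pair involves only a table lookup for the definedness of $\plusdot$ together with intersection, union and equality tests on subsets of a base of size at most $2|\c A|^2$. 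All of these are polynomial-time operations, so verification runs in polynomial time.

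Finally I would note that (i)--(iii) capture representability exactly. Condition (i) is faithfulness, and (ii), (iii) say precisely that $\plusdot$ is defined if and only if $\cupdot$ of the images is, and agrees with it when defined; together (ii) and (iii) also force the image $\set{a^\theta\mid a\in\c A}$ to be closed under $\cupdot$, since whenever $a^\theta\cap b^\theta=\emptyset$ the set $a^\theta\cup b^\theta$ equals $(a\plusdot b)^\theta$ and so lies in the image. Hence a $\theta$ passing verification is an isomorphism from $\c A$ onto a disjoint-union partial algebra of sets. Correctness then follows from the two directions of the small-base property: if $\c A$ is representable a valid certificate exists by \Cref{lem:game}, and any $\theta$ passing verification witnesses representability directly. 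There is no genuine obstacle beyond this bookkeeping; the single point on which membership in {\bf NP} (rather than mere decidability) hinges is the polynomial bound $2|\c A|^2$ on the base size, which is exactly what \Cref{lem:game} provides.
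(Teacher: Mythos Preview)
Your proof is correct and rests on the same key ingredient as the paper's: the polynomial base bound $2|\c A|^2$ from \Cref{lem:game}. The only difference is presentational and essentially a matter of transposition. You guess the representation $\theta:\c A\to\powerset(X)$ directly and verify conditions (i)--(iii); the paper instead guesses, for each base point, the subset $U\subseteq\c A$ of elements whose representation contains that point (the sets $S_{a,b}$ and $T_{a,b}$), and verifies the $\plusdot$-prime, bi-closed, pairwise-incombinable conditions of \Cref{lem:sets}. These are dual views of the same incidence relation between $\c A$ and the base, and the verification conditions correspond under this duality, so neither approach buys anything the other does not.
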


\begin{proof}
Given a finite partial $\plusdot$-algebra $\c A = (A, \plusdot)$, a non-deterministic polynomial-time algorithm based on the proof of \Cref{lem:game} runs as follows.  For each distinct pair $a\neq b$ it creates a set $S_{a, b}$  and for each pair $a, b$ where $a\plusdot b$ is undefined it creates a set $T_{a, b}$ (all these sets are initially empty).   Then for each $c\in\c A$, each set $S_{a, b}$ and each set $T_{a, b}$ it guesses whether $c\in S_{a, b}$ and whether $c\in T_{a, b}$.  Once this is done, the algorithm then verifies that 
exactly one of $a$ and $b$ belongs to $S_{a, b}$, that both $a$ and $b$ belong to $T_{a, b}$ and that each of these sets is a $\plusdot$-prime, bi-closed, pairwise incombinable set (to verify this for any single set takes quadratic time, in terms of the size of the input $(A, \plusdot)$).    This takes quartic time.   By \Cref{lem:game} this non-deterministic algorithm  solves the problem.
\end{proof}

\begin{problem}\label{complete}
Is the problem of determining whether a finite partial $\plusdot$-algebra has a disjoint-union representation {\bf NP}-complete?
\end{problem}

Now turning our attention to validity, let $s(\bar a), t(\bar a)$ be terms built from variables in $\bar a$ and the constant $0$, using $\plusdot$.  We take the view that the equation $s(\bar a)=t(\bar a)$ is valid if for every disjoint-union  partial algebra of sets with zero, $\c A$, and every assignment of the variables in $\bar a$ to sets in $\c A$, either both $s(\bar a)$ and $t(\bar a)$ are undefined or they are both defined and are equal. The following result is rather trivial but worth noting. It contrasts with \Cref{thm:nfa} by showing that the equational fragment of the first-order theory of partial $(\cupdot, \emptyset)$-algebras is an extremely simple object.

\begin{theorem}\label{validity}
The validity problem for $(\plusdot, 0)$-equations can be solved in linear time.
\end{theorem}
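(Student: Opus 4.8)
The plan is to reduce the validity of a $(\plusdot, 0)$-equation $s(\bar a) = t(\bar a)$ to a purely combinatorial comparison of two finite pieces of data extracted from each term, and then to observe that these data can be read off in a single linear-time pass over the terms.

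First I would establish a normal form. For a $(\plusdot, 0)$-term $s$, let $\mathrm{var}(s)$ be the set of variables occurring in $s$ and let $\mathrm{rep}(s) \subseteq \mathrm{var}(s)$ be the set of variables occurring at least twice in $s$. Viewing $s$ as a binary tree with $\plusdot$ at each internal node and a variable or $0$ at each leaf, a routine induction on this tree---using only that in any algebra of sets $\emptyset$ is disjoint from every set, that $A \cupdot B$ is defined precisely when $A \cap B = \emptyset$, and that $\cupdot$ is associative and commutative wherever defined---shows that, under any assignment $\rho$ of the variables into a disjoint-union partial algebra of sets with zero, the value $s^\rho$ is defined if and only if both (i) $\rho(x) = \emptyset$ for every $x \in \mathrm{rep}(s)$ and (ii) $\rho(x) \cap \rho(y) = \emptyset$ for all distinct $x, y \in \mathrm{var}(s)$; moreover, when defined, $s^\rho = \bigcup_{x \in \mathrm{var}(s)} \rho(x)$. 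The upshot is that both the domain of definition and the value of $s$ depend on $s$ only through the pair $(\mathrm{var}(s), \mathrm{rep}(s))$.

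From the normal form I would prove the criterion that $s = t$ is valid if and only if $\mathrm{var}(s) = \mathrm{var}(t)$ and $\mathrm{rep}(s) = \mathrm{rep}(t)$. Sufficiency is immediate: equal pairs force $s$ and $t$ to share the same definedness condition and the same value. For necessity I would exhibit a counterexample inside a power-set algebra $\powerset(X)$---which is itself a disjoint-union partial algebra of sets with zero---whenever the pairs differ. If $\mathrm{rep}(s) \neq \mathrm{rep}(t)$, choose $x_0$ in the symmetric difference, say $x_0 \in \mathrm{rep}(s) \setminus \mathrm{rep}(t)$, assign $\emptyset$ to every variable of $\mathrm{rep}(t)$ and pairwise distinct singletons to all remaining variables; then $t^\rho$ is defined while $\rho(x_0)$ is nonempty, so $s^\rho$ is undefined and the equation fails. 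Otherwise $\mathrm{rep}(s) = \mathrm{rep}(t)$ but $\mathrm{var}(s) \neq \mathrm{var}(t)$; choose $x_0 \in \mathrm{var}(s) \setminus \mathrm{var}(t)$, which is necessarily not a repeated variable, assign $\emptyset$ to the common repeated variables and pairwise distinct singletons elsewhere; then both terms are defined but the point of $\rho(x_0)$ lies in $s^\rho \setminus t^\rho$, so the two values differ.

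Finally, the algorithm parses $s$ and $t$ in time linear in their sizes and, during the same pass, records for each variable whether it has so far occurred zero, exactly one, or at least two times; this yields $\mathrm{var}$ and $\mathrm{rep}$ for each term, and comparing the resulting pairs of sets is again linear. The step I expect to require the most care is the bookkeeping around repeated variables in the necessity direction: because a repeated variable is forced to be empty, the naive counterexample of assigning distinct singletons to everything can fail to separate the terms, so the assignments must be tailored to respect the constraints imposed by $\mathrm{rep}(s)$ and $\mathrm{rep}(t)$. Once the normal form is established, however, each case of the criterion collapses to a one-line choice of assignment.
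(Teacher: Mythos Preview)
Your proposal is correct and follows essentially the same approach as the paper: both arrive at the criterion that $s=t$ is valid precisely when the two terms have the same set of occurring variables and the same set of variables occurring more than once, and both observe this is linear-time checkable. Your version is more explicit than the paper's---you spell out the normal form for definedness and value, and you construct concrete counterassignments in a power-set algebra for the necessity direction, whereas the paper simply asserts the criterion after noting associativity, commutativity, zero-deletion, and the repeated-variable-forces-zero observation.
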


\begin{proof}
A $(\plusdot, 0)$-term is formed from variables and $0$, using $\plusdot$. Now $\cupdot$ is associative in the sense that either both sides of $(a \cupdot b) \cupdot c = a \cupdot (b \cupdot c)$ are defined and equal, or neither is defined. In the same sense, $\cupdot$ is also commutative. Hence, in representable partial $(\plusdot, 0)$-algebras, the bracketing and order of variables in a term  does not affect whether a term is defined, under a given variable assignment,  or the value it denotes when it is defined.  Similarly, any zeros occurring in a term may be deleted from the term without altering its denotation. If a variable $a$ occurs more than once in a term then the term can only be defined if $a$ is assigned the value $0$.  Hence an  equation $s(\bar a)=t(\bar a)$ is valid if and only if 
\begin{enumerate}[label=(\alph*)]
\item   the set of variables occurring in $s(\bar a)$ is the same as the set of variables occurring in $t(\bar a)$, 
\item
the set of variables occurring more than once in $s(\bar a)$ is the same as the set of variables occurring more than once in $t(\bar a)$.
\end{enumerate}
 This can be tested in linear time.
\end{proof}

\begin{problem}
Consider the set $\Sigma$ of all first-order $\lang L(J)$-formulas satisfiable over some disjoint-union partial algebras of sets.  Is this language decidable and if so, what is its complexity?  
\end{problem} 

We have seen that the class  of partial algebras with $\sigma$-representa\-tions by sets is not finitely axiomatisable, provided either $\cupdot$ or $\minusdot$ is in $\sigma$ and all symbols in $\sigma$ are from $\set{\cupdot, \minusdot, \emptyset}$, and the same negative result holds for representations by partial functions (with $\dcupdot$ in place of $\cupdot$). However when intersection is added to these signatures the representation classes \emph{are} finitely axiomatisable by sets. This leaves some cases in question, with regards to finite axiomatisability.

\begin{problem}\label{remaining}
  Determine whether the class of partial algebras $\sigma$-representable by partial functions is finitely axiomatisable for  signatures $\sigma$ containing $\cap$ and either $\dcupdot$ or $\minusdot$,  where symbols in $\sigma$ are from $\set{\dcupdot, \minusdot, \cap, |, \emptyset}$.  
\end{problem}

\bibliographystyle{abbrv}

\bibliography{robin}

\end{document}